\documentclass[10pt,a4paper]{article}
\usepackage{amsmath,amssymb,amsthm}
\usepackage[a4paper,top=2.05cm,bottom=2.10cm,left=2.20cm,right=2.20cm,footskip=24pt]{geometry}
\IfFileExists{microtype.sty}{\usepackage{microtype}}{}
\IfFileExists{hyperref.sty}{\usepackage[hidelinks]{hyperref}
\hypersetup{pdftitle={Strict spectral supersaturation for cliques: extremal graphs and sharp thresholds},pdfauthor={Qi Wu and Yong Lu}}}{}
\allowdisplaybreaks[2]
\newtheorem{theorem}{Theorem}[section]
\newtheorem{lemma}[theorem]{Lemma}
\newtheorem{proposition}[theorem]{Proposition}
\newtheorem{corollary}[theorem]{Corollary}
\theoremstyle{remark}

\newcommand{\lam}{\lambda}
\newcommand{\al}{\alpha}
\newcommand{\one}{\boldsymbol 1}
\newcommand{\eps}{\varepsilon}
\newcommand{\NN}{N_{K_4}}
\newcommand{\LL}{L_{m,q}}
\title{Strict spectral supersaturation for cliques:
extremal graphs and sharp thresholds\thanks{Supported by the National Natural Science Foundation of China (Nos.~12371348 and 12201258) and the High-Quality Science and Technology Cultivation Project of Jiangsu Normal University (No.~JSNUGZL2026069).}}
\author{Qi Wu, Yong Lu\thanks{Corresponding author. Email addresses: \texttt{wuqimath@163.com} (Qi Wu), \texttt{luyong@jsnu.edu.cn} (Yong Lu).}\\[3pt]
\small School of Mathematics and Statistics, Jiangsu Normal University,\\[-1pt]
\small Xuzhou, Jiangsu 221116, China}
\date{}
\begin{document}
\maketitle
\begin{abstract}
For every fixed $r\ge3$ and all sufficiently large $n$, we determine the largest adjacency spectral radius of an $n$-vertex graph with fewer than $q c_r(n)$ copies of $K_{r+1}$, where $1\le q<n/r$. Here $c_r(n)$ is the number of copies created by adding one edge to a largest part of the Tur\'an graph. We also determine all extremal graphs. In most cases the extremal graph is obtained from an almost balanced complete multipartite graph by adding a star in one part. Two small values of $q$ require separate constructions, and an additional transition occurs when $r=3$ and $n\equiv2\pmod3$. Under the non-strict constraint, the unique extremal graph is obtained by adding a $q$-edge star to a largest part of the Tur\'an graph. We determine the difference between the strict and non-strict values and prove that the sharp matching threshold is $\sqrt2(r-1)/r$. The proof treats separately the ranges $q=o(n)$, $q/m\to\gamma\in(0,1)$, and $q/m\to1$.
\end{abstract}
\noindent\textbf{Key words:} spectral radius; clique counting; supersaturation; Tur\'an graph; stability.\\
\textbf{AMS Subject Classification:} 05C50, 05C35.

\section{Introduction}\label{sec:results}
All graphs in this paper are finite and simple. We use $\lam(G)$ to denote the adjacency spectral radius of a graph $G$, and use $N_F(G)$ to denote the number of copies of $F$ in $G$. Copies of a clique are counted by their vertex sets. Fix $r\ge3$ and write
\[
 n=rm+s,\qquad 0\le s<r,\qquad
 \al_r=\frac{r-1}{r},\qquad f_r(c)=2c+\frac{c^2}{\al_r}.
\]
For positive integers $n_1,\ldots,n_r$, denote by $K_r(n_1,\ldots,n_r)$ the complete $r$-partite graph with part sizes $n_1,\ldots,n_r$. Denote by $T_{n,r}$ the balanced complete $r$-partite graph on $n$ vertices. An edge with both ends in one part is called an internal edge, and an edge joining two different parts is called a cross-edge. Let $c_r(n)$ be the minimum number of copies of $K_{r+1}$ created by adding one edge to $T_{n,r}$. Thus
\[
 c_r(n)=\begin{cases}
 m^{r-1},&s=0,\\
 (m+1)^{s-1}m^{r-s},&s>0
 \end{cases}.
 \]
We consider $\Lambda_r^<(n,q)=\max\{\lam(G):|V(G)|=n,\ N_{K_{r+1}}(G)<q c_r(n)\}$.
Unless stated otherwise, we assume $1\le q<n/r$. Thus $q\le m-1$ when $s=0$, and $q\le m$ when $s>0$. For a nonnegative integer $j$, we use $a^{[j]}$ to denote $j$ parts of size $a$; a term with exponent $[0]$ is omitted.

Supersaturation problems ask for the number of forbidden subgraphs after the extremal bound is passed. The triangle case begins with an unpublished theorem of Rademacher, as reported and subsequently developed by Erd\H{o}s \cite{Erdos1962,Erdos1964}. Lov\'asz and Simonovits \cite{LovaszSimonovits1975,LovaszSimonovits1983} studied the corresponding problem for complete graphs. Erd\H{o}s and Simonovits \cite{ErdosSimonovits} established a general supersaturation theorem, and Simonovits \cite{Simonovits1968} developed a method for stability problems in extremal graph theory. Reiher \cite{Reiher} proved the clique density theorem. A graph is color-critical if it contains an edge whose deletion lowers its chromatic number. Mubayi \cite{Mubayi} obtained sharp counting results for color-critical graphs, and Pikhurko and Yilma \cite{PikhurkoYilma} obtained corresponding extremal-structure results.

In the spectral version, the number of edges is replaced by the largest adjacency eigenvalue. Bollob\'as and Nikiforov \cite{BollobasNikiforov} established inequalities relating clique counts to the spectral radius. Nikiforov \cite{Nikiforov2009,NikiforovSurvey} proved several spectral Tur\'an-type and stability results. For triangles, Li, Lu and Peng \cite{LiLuPeng} proved a spectral Erd\H{o}s--Rademacher theorem, and Li, Feng and Peng \cite{LiFengPengLS} proved a spectral Lov\'asz--Simonovits theorem.

Zhang \cite{ZhangWalks} obtained an exact walk-series equation for complete multipartite graphs with internal edges and used it to compare fixed Tur\'an extensions. Fang, Tait and Zhai \cite{FangTaitZhai} determined the corresponding fixed-extension spectral extremal graphs for disjoint color-critical subgraphs. Fang, Li, Lin and Ma \cite{FLLM} studied spectral supersaturation for color-critical graphs. Their Theorems~1.2 and~1.4(i) give the matching threshold in a square-root range. Their Theorem~7.2, together with the equality statement immediately following it, gives the star threshold and its clique equality case in a linear range. These results have the form $\lam(G)\ge\lam(H)\Longrightarrow N_{K_{r+1}}(G)\ge q c_r(n)$. In particular, their star result yields the corresponding non-strict extremal statement in that range.

To the best of our knowledge, the strict problem has not been determined throughout the full range $1\le q<n/r$. We use Zhang's response equation and refine several near-Tur\'an estimates from \cite{FLLM}. The new part is the exact strict classification, including the exceptional constructions, the transition constants, and the endpoint and gap estimates.

The strict problem is not obtained by simply replacing a $q$-edge star with a $(q-1)$-edge star. In most cases the graph still has $q$ internal edges, but its complete multipartite base changes. When $r=3$ and $s=2$, another graph with $q-1$ internal edges is sometimes better. There are also two special cases for small $q$. We determine all these graphs and then compare the strict value with the non-strict value and with the matching graph.

\subsection{The strict problem}
We need the following two almost balanced complete multipartite graphs:
\begin{align*}
 B^-_{n,r}&=K_r\bigl((m+1)^{[s+1]},m^{[r-s-2]},m-1\bigr)
 &&(r-s\ge2),\\
 B^+_{n,r}&=K_r\bigl(m+2,(m+1)^{[s-2]},m^{[r-s+1]}\bigr)
 &&(s\ge2).
\end{align*}
Each of them is obtained from the Tur\'an partition by changing two equal parts $(a,a)$ into $(a+1,a-1)$. The same two one-edge-deficient part-size patterns occur in the Erd\H{o}s--Rademacher stability analysis of Balogh and Clemen \cite{BaloghClemen}. Their problem is combinatorial, whereas here these patterns arise from spectral maximization under a strict clique-count constraint.
When we add a star in one part, all cross-edges are kept.

For $(r,s)\ne(3,2)$ define $E_{n,r,q}$ as follows.
When $s=0$, use $B^-_{n,r}$ and put a $q$-edge star in an $m$-part.
When $s=1$, use $B^-_{n,r}$ and put it in an $(m+1)$-part.
When $s=2$ and $r\ge4$, use $B^+_{n,r}$ with its edge in the $(m+2)$-part if $q=1$; for $q\ge2$, use $B^-_{n,r}$ with the star in an $(m+1)$-part.
When $3\le s<r$, use $B^+_{n,r}$ and put the star in an $(m+1)$-part.
For sufficiently large $n$, every stated part has at least $q+1$ vertices.

For tripartite graphs, let $S(u,v,w;p)$ mean $K_3(u,v,w)$ with a $p$-edge star in its $u$-part. When $r=3$ and $n=3m+2$, set $U_{m,q}=S(m+2,m,m;q)$ and $V_{m,q}=S(m,m+1,m+1;q-1)$. Also put $\LL=\max\{\lam(U_{m,q}),\lam(V_{m,q})\}$.
Let $M_m$ be $K_3(m+2,m,m)$ with a copy of $P_3$ inserted in its largest part and one edge inserted in either of its two $m$-parts. Here $P_3$ is a path on three vertices, so $M_m$ has three internal edges in total.

\begin{theorem}\label{thm:main}
Fix $r\ge3$. For all sufficiently large $n$ and all integers $1\le q<n/r$ the following statements hold.

If $(r,s)\ne(3,2)$, then $\Lambda_r^<(n,q)=\lam(E_{n,r,q})$,
and $E_{n,r,q}$ is the unique extremal graph up to isomorphism.

If $r=3$ and $s=2$, the unique extremal graph is $U_{m,q}$ when $q=1,2$, and it is $M_m$ when $q=3$. For $4\le q\le m$, $\Lambda_3^<(3m+2,q)=\max\{\lam(U_{m,q}),\lam(V_{m,q})\}$, and no other graph attains equality up to isomorphism. The real response functions of $U_{m,q}$ and $V_{m,q}$ have one crossing
\begin{equation}\label{eq:crossing-location}
 q_*(m)=\frac{\sqrt{41}-5}{4}\,m+
 \frac{887-19\sqrt{41}}{128\sqrt{41}}+O(m^{-1}).
\end{equation}
Within $4\le q\le m$, only $U$ is extremal below this crossing, only $V$ above it, and both at an integer crossing.
\end{theorem}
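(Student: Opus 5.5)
The proof has two halves. First, a stability reduction shows that every extremal graph is a complete $r$-partite graph, with parts of size close to $m$, plus a few internal edges. Second, exact spectral comparisons among these candidates use Zhang's walk-series (response) equation.

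\emph{Stability step.} Let $G$ be extremal. The candidate graphs of the theorem have spectral radius at least $\lam(T_{n,r})$, so $\lam(G)\ge\lam(T_{n,r})$. Also $N_{K_{r+1}}(G)<qc_r(n)=O(n^{r})$, which is $o(n^{r+1})$. Nikiforov's spectral stability together with the near-Tur\'an estimates of \cite{FLLM} then shows that $G$ differs from some complete $r$-partite graph in $o(n^2)$ edges. A Perron-vector cleaning argument upgrades this. Vertices of small Perron entry are removed and re-attached, and any missing cross-edge is added. The point is that adding a missing cross-edge raises $\lam$ by order $1/n$, while it creates at most $O(n^{r-2})$ new copies of $K_{r+1}$; this cost can be paid for by deleting an internal edge or moving a vertex. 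The outcome is the following structure: $G=K_r(n_1,\dots,n_r)+I$, where every $|n_i-m|$ is $O(1)$ and $I$ is a set of internal edges. Each internal edge in part $i$ creates $\prod_{j\ne i}n_j$ copies, up to corrections from interactions between internal edges. So the constraint reads
\[
\sum_{e\in I}\prod_{j\ne i(e)}n_j+(\text{interaction terms})<qc_r(n),
\]
and this bounds $|I|$ by about $q$.

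\emph{Exact comparison.} For a fixed base $K_r(n_1,\dots,n_r)$ plus internal graphs $H_i$, Zhang's equation expresses $\lam$ implicitly through $\lam$ and the walk counts of the $H_i$. For $q=o(n)$ this is a perturbation: $\lam$ is the base eigenvalue plus a gain of order $\lam_1(H_i)^2/n$ and higher walk terms. Stars maximize these walk sums for a given edge count. The trade-off is then explicit. Moving from the Tur\'an partition to the unbalanced $B^{\pm}_{n,r}$ costs order $1/n$ in the base eigenvalue. In return it lowers $\prod_{j\ne i}n_j$ for the part containing the star below $c_r(n)$, so $q$ edges still fit under the strict bound. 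The alternative keeps the Tur\'an base but is forced to use only $q-1$ edges. Comparing the two responses gives $E_{n,r,q}$ in each residue class $s$. The exceptional cases come from checking which parts can host the star, and here $q=1$ behaves differently because a single edge gains more from the largest part. For $r=3$, $s=2$, the same expansion gives $U_{m,q}$ for $q=1,2$ and $M_m$ for $q=3$. At $q=3$ splitting the edges across two parts is cheaper in clique count: the path in the $(m+2)$-part plus one edge in an $m$-part fits the budget, whereas a third star edge in the same part would not.

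\emph{Linear ranges and the crossing.} For $q/m\to\gamma\in(0,1)$, and for $q/m\to1$, the star contributes at leading order. Here I would solve the response equation for stars exactly: $\lam$ is the largest root of an explicit cubic- or quartic-type equation in $\lam$, $q$ and the part sizes. Within the stability class I would still show that the optimum is a single star, placed on one of the two bases. For $r=3$, $s=2$ this leaves exactly $U_{m,q}$ against $V_{m,q}$. Their response functions $F_U(\lam,q)$ and $F_V(\lam,q)$ are compared by setting $q=\gamma m$ and expanding in $1/m$. The leading equation $2\gamma^2+5\gamma-2=0$, whose root is $\gamma=(\sqrt{41}-5)/4$, gives the slope in \eqref{eq:crossing-location}, and the next order gives the constant term. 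Uniqueness of the crossing follows from monotonicity of $F_U-F_V$ in $q$.

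\emph{Main obstacle.} The hardest step is uniqueness in the linear range near $q/m\to1$. There the gaps between candidate graphs shrink to second order, and the star is no longer a small perturbation. So I must rule out every competitor: non-star internal graphs, internal edges spread over several parts, and other part-size vectors. This needs uniform error bounds in the response expansion. I would attempt it by a direct Perron-entry comparison: moving an internal edge to the star center strictly increases $\lam$ without increasing the clique count.
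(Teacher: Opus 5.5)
Your two-stage outline (stability, then comparisons through Zhang's response equation) matches the paper, and so does your treatment of the $U$/$V$ crossing. The stability step as you describe it, however, fails in the linear range.

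\textbf{The cleaning estimate is false at the hub.} You claim that restoring a missing cross-edge creates only $O(n^{r-2})$ copies of $K_{r+1}$. This fails when the edge meets the vertex $v$ carrying the star. Each internal neighbor of $v$, together with one vertex from each of the other $r-2$ parts, completes a new clique. So about $qm^{r-2}=\gamma\,m^{r-1}$ copies appear, which is the same order as $c_r(n)$. A missing hub cross-edge is therefore a first-order resource: it pays for about $\gamma=q/m$ extra hub edges. Its net spectral cost is only $\bigl(2+\tfrac{2\gamma}{r-1}\bigr)(1-\gamma)/n$, the $z_h$ coefficient in \eqref{eq:weighted-defect}, and this tends to $0$ as $q/m\to1$. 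Hence no exchange of the form ``restore cross-edges, delete internal edges'' can force $z=0$, or even $z=O(1)$, near the endpoint. Proposition~\ref{prop:core-example} shows this concretely for $r=3$, $n=3m+2$: it gives graphs that satisfy the strict bound, lie within $(5/3+o(1))/n$ of the optimum, and have unboundedly many missing cross-edges in every tripartition. The paper bounds $z$ only for $q\le(1-\tau)m$ (Proposition~\ref{thm:reduction}, using the hub/non-hub distinction of Lemma~\ref{lem:standard-completions}). Near $q=m$ it changes coordinates: it deletes the hub and works with the hub's total non-degree $t=n-1-d_G(v)$ on the core (Lemma~\ref{lem:core-expansion}, Theorem~\ref{thm:core}). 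In those coordinates, hub edges and missing hub cross-edges merge into one variable.

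\textbf{The endpoint is decided by exact counting, not by improvement moves.} Let $R$ be the claimed extremal graph, and let $h=a-q$ as in the endpoint argument. Consider a balanced core with $k\le r-2$ internal edges plus a hub adjacent to all but $h+k$ core vertices. For $k=0$ this family includes $T^*_{n,r,q}$ and its variants with the non-neighbors spread over several parts. By \eqref{eq:general-endpoint-score}, $n(\lambda(G)-\lambda(R))\to2\alpha-2k/(r-1)>0$. So these graphs beat $R$ spectrally at order $1/n$. They are excluded only because their clique count is at least $qc_r(n)$, which requires a delicate exact count (Lemma~\ref{lem:balanced-core-general}). After that, the admissible non-neighborhoods on $C_\pm$ must be classified by exact counting and then ordered by the core response (Lemmas~\ref{lem:low-core}--\ref{lem:core-order}); this is where $V_{m,q}$ arises. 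Moving internal edges to the star center does not address any of these competitors.

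\textbf{Your account of $M_m$ is backwards.} $U_{m,3}$ does satisfy the strict bound, since $3m^2<3m(m+1)$, and $M_m$ has the larger count $3m^2+2m+2$. $M_m$ wins for a spectral reason. Moving one edge into an $m$-part gains $8/n^2$ in the term $-\frac4{n^2}\sum_ia_ip_i$ of \eqref{eq:fine}. Lowering the squared-degree sum from $12$ to $8$ costs only $6/n^2$.
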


Direct counting shows that all the above graphs satisfy the strict inequality. The usual $B^-$ choices have clique count $q c_r(n)(1-m^{-2})$; the $B^+$ choice with its star in an $(m+1)$-part has count $q c_r(n)(1-(m+1)^{-2})$.
When $s=2$ and $q=1$, the edge in the $(m+2)$-part of $B^+$ gives $m^{r-1}<(m+1)m^{r-2}=c_r(n)$.
In the case $r=3,s=2$,
\begin{equation}\label{eq:exception-counts}
 \NN(U_{m,q})=qm^2,\quad
 \NN(V_{m,q})=(q-1)(m+1)^2,\quad
 \NN(M_m)=3m^2+2m+2<3m(m+1)
\end{equation}
for $m>2$. The difference between the bound and the second count is $(m+1)(m+1-q)>0$.

We also need the following two comparisons for small $q$. For $r\ge4$, $s=2$, the one-edge choice above exceeds the one-edge extension in an $(m+1)$-part of $B^-$ by
\begin{equation}\label{eq:one-edge-gap}
 \frac{2(r-3)}{rn^2}+O_r(n^{-3}).
\end{equation}
For $n=3m+2$, both the three-edge star and the triangle in the largest part of $B^+$ have smaller spectral radius than $M_m$, by
\begin{equation}\label{eq:mixed-gap}
 \frac{2}{n^2}+O(n^{-3}).
\end{equation}
We prove these two formulas in Subsection~\ref{sec:sublinear}. Thus neither the three-edge star nor the triangle gives the largest spectral radius in this case.

\subsection{The non-strict problem and the matching bound}
Let $T^*_{n,r,q}$ be obtained by inserting a $q$-edge star in a largest part of $T_{n,r}$. This star fits throughout $1\le q<n/r$, and its clique count is exactly $q c_r(n)$. Define $\Lambda_r^{\le}(n,q)=\max\{\lam(G):|V(G)|=n,\ N_{K_{r+1}}(G)\le q c_r(n)\}$.
\begin{theorem}\label{thm:closed}
Fix $r\ge3$. For all sufficiently large $n$ and every integer $1\le q<n/r$, $\Lambda_r^{\le}(n,q)=\lam(T^*_{n,r,q})$,
and $T^*_{n,r,q}$ is the unique extremal graph up to isomorphism. For every fixed $0<\eta<1$, there is $c_{r,\eta}>0$ such that, for sufficiently large $n$ and $\eta m\le q<n/r$, every other graph with $N_{K_{r+1}}(G)\le q c_r(n)$ satisfies
\begin{equation}\label{eq:closed-gap}
 \lam(T^*_{n,r,q})-\lam(G)\ge c_{r,\eta}/n.
\end{equation}
\end{theorem}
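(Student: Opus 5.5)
The plan is a stability reduction followed by an exact comparison of a small family of candidates through Zhang's response equation. Let $G$ be extremal for $\Lambda_r^{\le}(n,q)$. Since $T^*_{n,r,q}$ satisfies the constraint, $\lam(G)\ge\lam(T^*_{n,r,q})>\lam(T_{n,r})$. By Nikiforov's spectral stability theorem and the near-Tur\'an estimates of \cite{FLLM}, $G$ differs from $T_{n,r}$ in $o(n^2)$ edges. We then pass to the Perron vector $x$. After a cleaning step that removes vertices of small Perron weight and uses the extremality of $G$, we obtain a partition $V_1,\dots,V_r$ with the following properties: all cross-edges are present, every part has size $m+O(\sqrt q)$ or $m+O(1)$, and every internal edge lies in a part. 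Completing missing cross-edges only increases $\lam$. To preserve the clique constraint we argue that a missing cross-edge costs at least $\Theta(m^{r-2})$ cliques per deleted pair but gains only $O(x_ux_v)$ in $\lam$. The extremal graph is therefore $K_r(n_1,\dots,n_r)$ plus a set $I$ of internal edges, with $N_{K_{r+1}}=\sum_{uv\in I,\ uv\subset V_i}\prod_{j\ne i}n_j+(\text{higher-order terms from internal triangles, etc.})\le qc_r(n)$.

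Next we count and compare. Each internal edge in part $i$ contributes $\prod_{j\ne i}n_j$ cliques, which is at least $c_r(n)$ whenever $V_i$ is a largest part of a Tur\'an-like partition. This gives $|I|\le q$, with equality forcing the balanced base and all edges in largest parts. If $|I|\le q-1$, the base may be unbalanced, but every imbalance $(a,a)\to(a+1,a-1)$ costs $\Theta(1/n)$ in $\lam$ relative to $T_{n,r}$-based configurations. Meanwhile one extra internal edge gains roughly $2\cdot(\text{Perron weight})^2\approx 2/(rn)\cdot$const, and this is of the same order. We compare these exactly by writing Zhang's walk-series (response) equation for $\lam$ of a complete multipartite graph with a graph $H$ added in the parts. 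In the sublinear range the correction from $H$ is governed by $\lam(H)$ and by the walk counts of $H$. Among graphs with $q$ edges in a part, the star uniquely maximizes the relevant response: it maximizes the number of length-two walks $\sum d^2$ and $\lam(H)$ simultaneously, and triangle-containing configurations add cliques. This identifies $T^*_{n,r,q}$ against every competitor with $q$ internal edges. Splitting the edges among several parts, or using a non-star graph, loses at the second-order term.

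The main obstacle is the competitor family with $q-1$ internal edges on an unbalanced base, and the case $q$ close to $m$, where a star with $q$ leaves touches almost a whole part and the linear-range asymptotics are needed. We treat three regimes, following the abstract: $q=o(n)$, $q/m\to\gamma\in(0,1)$, and $q/m\to1$. In each regime we expand the response equation to get $\lam(T^*)-\lam(\text{competitor})$ as an explicit function of $\gamma$. For the non-strict constraint the competitor must have strictly fewer edges or a worse base, because a $B^\pm$ base with $q$ edges already exceeds $qc_r(n)$ by \eqref{eq:exception-counts}-type counts; this is the key difference from Theorem~\ref{thm:main}. The resulting difference is $\Theta(1/n)$ times a positive continuous function of $\gamma$ on $[\eta,1]$, which yields \eqref{eq:closed-gap} with $c_{r,\eta}$ its minimum. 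Uniqueness follows because every deviation from the reduced structure strictly decreases $\lam$.
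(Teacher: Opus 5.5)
Your three-regime plan matches the paper's overall structure. However, the two steps that carry the weight are either unjustified or false as stated.

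\textbf{First gap: the reduction to ``all cross-edges present''.} This step does not go through, and near $q/m\to1$ it is exactly the crux. Restoring a missing cross-edge raises $\lam$, but it can also push the clique count over the budget. So extremality alone gives nothing; you must compare with what the saved cliques can buy elsewhere.
\begin{itemize}
\item A missing cross-edge at the star centre $v$ lies in about $p\,m^{r-2}\approx(q/m)\,c_r(n)$ standard completions, not $\Theta(m^{r-2})$. That is almost as many as one internal edge carries.
\item Such an edge costs $(2+2\gamma/(r-1))/n$ in $\lam$, which is exactly what one more internal edge at $v$ gains.
\item The exchange therefore nets only $-(1-\gamma)(2+2\gamma/(r-1))/n$. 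This is the coefficient of $z_h$ in \eqref{eq:weighted-defect}, and it vanishes as $\gamma\to1$. So no local exchange excludes missing hub edges at the endpoint.
\end{itemize}
Moreover, the gap \eqref{eq:closed-gap} concerns arbitrary feasible $G$, so no extremality-based cleaning is available there at all. For $r=3$, the graphs $W_{m,h}$ of Proposition~\ref{prop:core-example} show this concretely: they satisfy the non-strict bound, lie within $O(1/n)$ of $\lam(T^*_{n,r,q})$, and have unboundedly many missing cross-edges in every tripartition.

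The missing idea is the paper's one-vertex core. Delete a maximum-internal-degree vertex $v$. Encode all of its non-neighbors, in its own part and across, in the single quantity $t=n-1-d_G(v)$. Then work with the core response \eqref{eq:core-response}, the expansion \eqref{eq:core-spectrum} and the budget \eqref{eq:core-budget}. Together these give $n(\lam(G)-\lam(T^*_{n,r,q}))\le-\alpha D_c-2k/(r-1)-2z_o+o(1)$.

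\textbf{Second gap: the counting step is wrong.}
\begin{itemize}
\item $|I|=q$ does not force the balanced base. A $B^{\pm}$ base carrying a $q$-edge star does \emph{not} exceed $qc_r(n)$. The strict extremal graphs $E_{n,r,q}$ have $qc_r(n)(1-m^{-2})$ or $qc_r(n)(1-(m+1)^{-2})$ copies, so they are feasible for the non-strict problem. They lose to $T^*_{n,r,q}$ only spectrally, through the base-defect term $\alpha D_2/n=2\alpha/n$.
\item $|I|\le q$ holds only on the Tur\'an partition. Suppose the star centre sits in a part exceeding a largest Tur\'an part by $d>0$. Then each of its edges has fewer standard completions, and the budget allows $e\le\gamma d-(1-\gamma)z_h$ extra edges (Lemma~\ref{lem:integer}). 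Excluding this requires showing $-\alpha D_2+c_\gamma d<0$. That uses $D_2\ge d(d+1)$ (Lemma~\ref{lem:integer-defect}) together with $c_\gamma=2\gamma(r-2+\gamma)/r<2\alpha$.
\item At the endpoint, uniqueness needs one more step. Among all placements of the $h$ non-neighbors of $v$ on the balanced core, only concentrating them in one smallest part meets the bound $qc_r(n)$. This follows from strict concavity of $\sum_i\log(b_i-w_i)$.
\end{itemize}
Your ``explicit positive function of $\gamma$ on $[\eta,1]$'' presupposes a finite list of competitors, which the proposal never derives. The constant $c_{r,\eta}$ has to come from this bounded integer optimization, combined with a compactness argument over $q/m\in[\eta,1]$.
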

For cliques, Theorem~\ref{thm:closed} extends the non-strict extremal consequence of \cite[Theorem~7.2]{FLLM} and the uniqueness statement immediately following that theorem from the range $q\le\delta n$ for some $\delta=\delta(r)>0$ to all $1\le q<n/r$, that is, to every value for which the star fits. Notice that the non-strict inequality removes the extra graphs that appear in the strict problem when $q=3$.

Let $Y_{n,r,q}$ be obtained by inserting a matching of size $q$ in a largest part of $T_{n,r}$, whenever that part contains at least $2q$ vertices. In particular, it is defined in every fixed square-root range for large $n$, and $N_{K_{r+1}}(Y_{n,r,q})=q c_r(n)$.
\begin{theorem}\label{thm:matching-threshold}
Fix $r\ge3$. The supremum of the constants $\delta>0$ for which, for all sufficiently large $n$ and every integer $1\le q<\delta\sqrt n$,
\[
 \lam(G)\ge\lam(Y_{n,r,q})
 \quad\Longrightarrow\quad N_{K_{r+1}}(G)\ge q c_r(n)
\]
is $\delta_r^*=\sqrt2\,\frac{r-1}{r}$.
For each value of $s$, the real star and matching response roots have a unique crossing in every fixed window $a\sqrt n\le q\le b\sqrt n$ with $0<a<\delta_r^*<b$. Its location is
\begin{equation}\label{eq:matching-location}
 \theta_{r,s}(n)=\delta_r^*\sqrt n+\beta_{r,s}+O_r(n^{-1/2}),
 \qquad
 \beta_{r,s}=\begin{cases}11/6,&(r,s)=(3,2),\\[1mm]1/2,&\text{otherwise}.
 \end{cases}
\end{equation}
Here the strict star graph is $E_{n,r,q}$ when $(r,s)\ne(3,2)$ and is $U_{m,q}$ when $(r,s)=(3,2)$. The two small-$q$ exceptions are outside these windows.
\end{theorem}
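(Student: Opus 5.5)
By Theorem~\ref{thm:main}, the implication $\lam(G)\ge\lam(Y_{n,r,q})\Rightarrow N_{K_{r+1}}(G)\ge qc_r(n)$ holds for a given $(n,q)$ exactly when $\Lambda_r^<(n,q)<\lam(Y_{n,r,q})$. For $q$ outside the two small-$q$ exceptions, this means $\lam(E_{n,r,q})<\lam(Y_{n,r,q})$, or $\lam(U_{m,q})<\lam(Y_{n,3,q})$ when $(r,s)=(3,2)$. Indeed, every graph with fewer than $qc_r(n)$ copies has spectral radius at most the strict extremal value, and the strict star graph itself violates the implication whenever its radius reaches $\lam(Y)$. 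The problem is therefore reduced to comparing two explicit graphs throughout the range $q=\Theta(\sqrt n)$. The small-$q$ exceptions ($q\le3$) are handled directly with \eqref{eq:one-edge-gap} and \eqref{eq:mixed-gap}: $Y$ wins there, because a bounded star has the same first-order gain as a matching and the base change costs a positive amount of order $n^{-2}$.

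The main step is an asymptotic expansion of both spectral radii from Zhang's response equation. For a complete multipartite base $K$ with internal graph $H$ inside one part, I write $\lam=\lam(K)+\Delta$ and expand the walk series of $H$ against the Perron vector of $K$. I expect an expansion of the form
\[
 \Delta=\frac{2e(H)}{\al_r n}\Bigl(1+O(n^{-1})\Bigr)
 +\frac{\sum_v d_H(v)^2}{\al_r^3n^3}\,(1+o(1))+\cdots,
\]
where the degree-square term comes from walks of length two inside $H$. For the matching this sum is $2q$, and for the star it is $q^2+q$. The star therefore gains an extra amount of order $q^2/n^3$, which is $\Theta(n^{-2})$ when $q=\Theta(\sqrt n)$.

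Against this gain, the star graph pays for its base change. Replacing $T_{n,r}$ by $B^\pm_{n,r}$ lowers $\lam(K)$ by order $n^{-2}$ with an explicit constant. Placing the star in a part of a different size shifts the first-order coefficient of $q$ by a relative $O(1/n)$, which again contributes order $q/n^{2}\cdot n^{-0}$ terms at scale $n^{-2}$ after normalisation. The case $(r,s)=(3,2)$, where $U_{m,q}$ has base $K_3(m+2,m,m)$, is treated separately.

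Collecting the terms of order $n^{-2}$ gives $\lam(E)-\lam(Y)=n^{-2}\bigl(A q^2/n - B + C q/n\cdot\sqrt n\,\text{-type corrections}\bigr)$. The leading balance $Aq^2/n=B$ should yield $q\sim\delta_r^*\sqrt n$, which fixes $\delta_r^*=\sqrt2(r-1)/r$. I would then carry the expansion one order further, to $n^{-5/2}$. The resulting $q$-linear terms give the shift $\beta_{r,s}$, and I would show the real difference function is strictly increasing in $q$ on $[a\sqrt n,b\sqrt n]$, which gives the uniqueness of the crossing. Finally, $\delta<\delta_r^*$ works and $\delta>\delta_r^*$ fails, since for $q$ just past the crossing the star graph strictly beats $Y$ with fewer cliques.

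The main obstacle is the precise constants: error terms must be controlled uniformly to $O(n^{-5/2-1/2})$ with $q$ up to $b\sqrt n$. Here I would solve the response equation exactly for the two-level structure, since both the star and the matching reduce to small quotient matrices, rather than rely on generic perturbation bounds.
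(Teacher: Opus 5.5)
Your reduction through Theorem~\ref{thm:main} and your overall plan match the paper's. The plan is to compare the strict star graph with $Y_{n,r,q}$, use the sign of the difference for the supremum, its $q$-derivative for uniqueness, and the next order for $\beta_{r,s}$, with the star graph itself as the counterexample past the crossing. The gap is in the quantitative step, which is the actual content of the theorem. Your expansion is off by a factor of $n$, and $\delta_r^*$ is asserted rather than derived.

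Passing from $T_{n,r}$ to $B^\pm_{n,r}$ does not cost order $n^{-2}$. Since $\lam(K)=\al n-\al S_2/n+O(n^{-2})$ and the squared part-size defect is two, it costs $2\al/n$. Likewise, $\sum_v d_H(v)^2/(\al^3n^3)$ is only the raw length-two resolvent term $w_2/x^3$. After multiplying by $(1+R)^{-2}\approx\al^2$ and dividing by the root derivative $\approx1/n$, its contribution to the spectral radius is $\sum_v d_H(v)^2/(\al n^2)$. The first-order term is $2e(H)/n$, as in \eqref{eq:fine}.

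Hence the star-minus-matching gain is $(z^2-z)/(\al n^2)$, and the balance occurs at order $n^{-1}$. The equation $-2\al+\delta^2/\al=0$ gives $\delta=\sqrt2\al$. With your coefficient $1/(\al^3n^3)$ set against, say, a base cost $2\al/n^2$, you would get $\sqrt2\al^2\sqrt n$ instead. So the expansion you propose does not yield the claimed constant.

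The same scaling error undermines your treatment of $q\le3$. By \eqref{eq:one-edge-gap} and \eqref{eq:mixed-gap}, the exceptional graphs differ from the star graph by amounts of order $n^{-2}$. This is harmless only because the base cost is $2\al/n$; against a cost of order $n^{-2}$ the comparison would not be settled.

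For the location, the relevant next order is $n^{-3/2}$, and an $O(n^{-2})$ error suffices; it is not $n^{-5/2}$ with an $n^{-3}$ error. Your proposal also leaves open where $\beta_{r,s}$ comes from.

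\begin{itemize}
\item The $-z$ in $z^2-z$, from the star's squared-degree sum $z^2+z$ against $2z$ for the matching, gives $\beta=1/2$.
\item For $(r,s)\ne(3,2)$ the star and the matching lie in parts of the same size, so there is no part-size correction at all. This contradicts your remark about a shifted first-order coefficient.
\item For $(r,s)=(3,2)$, $U_{m,q}$ carries its star in the $(m+2)$-part while $Y$ uses an $(m+1)$-part. The term $-\frac4{n^2}\sum_ia_ip_i$ in \eqref{eq:fine} then contributes $-4z/n^2$.
\end{itemize}

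Substituting $z=\sqrt2\al\sqrt n+B$ makes the $n^{-3/2}$ coefficient $\sqrt2\al\bigl((2B-1)/\al-4\kappa_{r,s}\bigr)$. This gives $B=1/2+2\al\kappa_{r,s}$, which is $11/6$ when $r=3$. These are exactly the terms of \eqref{eq:matching-difference}. The paper then makes the derivative estimate uniform by applying the implicit function theorem to the exact rational star and matching responses in the scaled variables $\epsilon=n^{-1/2}$, $t=z\epsilon$.
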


\subsection{Outline of the proof}
We work with a maximum $r$-cut and write
\[
 G=K_r(n_1,\ldots,n_r)+H-D,\qquad
 H=H_1\cup\cdots\cup H_r,\quad p=e(H),\quad z=|D|,
\]
where $H_i$ lies in the $i$th part and $D$ consists of missing cross-edges. Put $S_2=\sum_i(n_i-n/r)^2$,
and let $M_2(H)$ count unordered pairs of disjoint internal edges, including pairs in different parts.

We use a maximum $r$-cut throughout the proof. First, the stability theorem of Fang, Li, Lin and Ma \cite[Theorem~2.8]{FLLM} reduces the problem to $O(n)$ edits of a complete $r$-partite graph. Their perturbation estimates are sharpened below to retain the signed quadratic term needed here. The exact comparisons use the multipartite response equation in Section~1 of Zhang \cite{ZhangWalks}. We then consider three cases. If $q=o(n)$, pairs of disjoint internal edges force the internal graph to be a star. If $q/m$ has a limit in $(0,1)$, an integer comparison gives the result. If $q/m\to1$, we delete the vertex meeting most internal edges and study the remaining core. Section~\ref{sec:tools} gives the required estimates, Section~\ref{sec:exact} proves the exact theorems, Section~\ref{sec:crossing} studies the two transitions, and Section~\ref{sec:gaps} gives further gap estimates. Standard Schur-complement arguments are used in several local calculations; cf. \cite{StewartSun}.

\section{Notation and preliminary lemmas}\label{sec:tools}
Put $\al=(r-1)/r$. In a maximum $r$-cut every vertex has at least as many neighbors in each other part as in its own part. All estimates below are uniform when the fixed parameters are fixed.

\subsection{Completion counts and Perron coordinates}
For an internal edge $e$ in part $V_i$, a \emph{standard completion} of $e$ is obtained by choosing one vertex in every part $V_j$ with $j\ne i$. Thus, before cross-edges are deleted, $e$ has exactly $\prod_{j\ne i}n_j$ standard completions. The next lemma is a clique-specific refinement of the near-Tur\'an counting estimate in \cite[Lemma~2.5]{FLLM}; the extra point is the distinction between missing cross-edges that meet the hub and those that avoid it.

\begin{lemma}\label{lem:standard-completions}
Let
\[
 G=K_r(n_1,\ldots,n_r)+H-D,
 \qquad p_i=e(H_i),\quad p=\sum_i p_i,\quad z=|D|.
\]
Then, for a constant $C_r$ depending only on $r$,
\begin{equation}\label{eq:standard-completions}
 \begin{split}
 N_{K_{r+1}}(G)\ge{}&\sum_i p_i\prod_{j\ne i}n_j\\
 &-C_r n^{r-2}\sum_{ab\in D}\bigl(d_H(a)+d_H(b)\bigr)
 -C_rpzn^{r-3}.
 \end{split}
\end{equation}
In particular, the error in \eqref{eq:standard-completions} is at most $C_rpzn^{r-2}$. If a vertex $v$ meets all but $k$ internal edges, then one missing edge meeting $v$ destroys at most $C_rpn^{r-2}$ standard completions, while one missing edge avoiding $v$ destroys at most
\begin{equation}\label{eq:standard-completions-hub}
 C_r\bigl((k+1)n^{r-2}+pn^{r-3}\bigr)
\end{equation}
standard completions.
\end{lemma}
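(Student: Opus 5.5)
The plan is to count only the standard completions that survive the deletions in $D$. Copies of $K_{r+1}$ built from one internal edge $e=xy\in H_i$ together with one vertex from each other part are distinct for distinct pairs $(e,\text{choice})$, since each such vertex set determines $e$. So
\[
 N_{K_{r+1}}(G)\ge\sum_i p_i\prod_{j\ne i}n_j-\#\{(e,\sigma):\sigma \text{ a standard completion of } e \text{ destroyed by } D\}.
\]
A completion $(e,\sigma)$ is destroyed only if some $ab\in D$ joins two of its vertices. The whole argument is then a union bound over $ab\in D$: I will bound the number $\phi(ab)$ of pairs $(e,\sigma)$ whose vertex set contains both $a$ and $b$.

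Fix $ab\in D$ with $a\in V_k$, $b\in V_l$, $k\ne l$. There are two cases for how $ab$ meets the completion.

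\emph{Case 1: $e$ contains $a$ or $b$.} Say $e\ni a$, so $e\subseteq V_k$. The other endpoint of $e$ is one of at most $d_H(a)$ choices. The vertex in $V_l$ is forced to be $b$, and the remaining $r-2$ parts give at most $n^{r-2}$ choices. This contributes at most $(d_H(a)+d_H(b))n^{r-2}$.

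\emph{Case 2: $e$ avoids both $a$ and $b$.} Then $a$ and $b$ are both chosen completion vertices. So $e$ lies in a part other than $V_k,V_l$, there are at most $p$ choices of $e$, and the remaining $r-3$ parts give at most $n^{r-3}$ choices. This contributes at most $pn^{r-3}$.

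Summing over $D$ gives \eqref{eq:standard-completions} with $C_r=1$ (any $C_r\ge1$ works). For the ``in particular'' claim, note that $d_H(a)+d_H(b)\le 2p$. Together with $n^{r-3}\le n^{r-2}$, the total error is then at most $3pzn^{r-2}$.

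For the hub statement, suppose $v$ meets all but $k$ internal edges.
\begin{itemize}
\item If $ab$ meets $v$, the bound $\phi(ab)\le(2p+p)n^{r-2}$ is already of the required form $C_rpn^{r-2}$.
\item If $ab$ avoids $v$, then every internal edge at $a$ other than possibly $av$ is one of the $k$ edges missing $v$. Hence $d_H(a)\le k+1$, and likewise $d_H(b)\le k+1$. Case 1 therefore contributes at most $2(k+1)n^{r-2}$ and Case 2 at most $pn^{r-3}$, which is \eqref{eq:standard-completions-hub}.
\end{itemize}

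The only delicate point, and the one I expect to be the main obstacle, is checking that the injectivity and case split are exhaustive. Could a destroyed completion be missed because $ab$ joins a vertex of $e$ to a chosen vertex, or two chosen vertices? These are exactly Cases 1 and 2. And $e$ itself is never in $D$, since $D$ consists only of cross-edges. The remaining work is routine bookkeeping of constants.
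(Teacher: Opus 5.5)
Your proposal is correct and follows the paper's proof essentially step for step. Both arguments use the disjointness of the standard-completion families and a union bound over $ab\in D$, split according to whether the internal edge meets $a$ or $b$ (at most $d_H(a)+d_H(b)$ edges, $n^{r-2}$ choices) or avoids both (at most $p$ edges, $n^{r-3}$ choices), and both get the hub bound from the fact that every vertex other than $v$ has internal degree at most $k+1$.
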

\begin{proof}
The standard-completion families of distinct internal edges are disjoint: a clique in one such family has a unique part containing two vertices, and those two vertices recover the internal edge. Fix a missing cross-edge $ab$. If a standard completion containing $ab$ is assigned to an internal edge incident with $a$ or $b$, then after that internal edge and $ab$ are fixed there are at most $n^{r-2}$ choices. There are at most $d_H(a)+d_H(b)$ such internal edges. For every other internal edge there are at most $n^{r-3}$ choices, since both ends of $ab$ are then fixed among the vertices chosen outside the doubled part. A union bound over $D$ proves \eqref{eq:standard-completions}.

The crude bound follows from $d_H(a)+d_H(b)\le2p$. If all but $k$ internal edges meet $v$, every vertex other than $v$ has internal degree at most $k+1$. Applying the first estimate separately to missing edges meeting and avoiding $v$ gives \eqref{eq:standard-completions-hub}.
\end{proof}

We shall also use the following coordinate estimate. The flat-coordinate estimate in \cite[Claim~4.3]{FLLM} is sufficient for unweighted near-Tur\'an graphs. We need a weighted version and a separate estimate at a vertex carrying linearly many internal edges. An internal edge of weight in $[0,1]$ is counted with its weight, and the same convention is used for a cross-edge whose weight has been lowered from one.

\begin{lemma}\label{lem:perron-coordinates}
Fix $r\ge3$. Let $A_\omega$ be obtained from $K_r(n_1,\ldots,n_r)$ by adding internal edges of total weight $P=O(n)$ and lowering the weights of a set of $Z=o(n)$ cross-edges. Suppose
\[
 \max_i|n_i-n/r|\le A=o(n),\qquad
 \lam(A_\omega)=\al n+O(1).
\]
For all sufficiently large $n$, the weighted graph is connected. Let $x$ be its positive unit Perron vector and put $M=\max_u x_u$.
Then $M=O(n^{-1/2})$. If $E_0=A+P+Z+1=o(n)$, then
\begin{equation}\label{eq:flat-coordinates}
 x_u=\frac{1+O(E_0/n)}{\sqrt n}
\end{equation}
uniformly in $u$.

Suppose instead that a vertex $v$ is incident with internal edges of total weight $P_v$, and that the total weight of the other internal edges is at most $K=o(n)$. Put $E=A+K+Z+1$. Then, uniformly when $E=o(n)$,
\begin{equation}\label{eq:hub-coordinates}
 x_u=\frac{1+O(E/n)}{\sqrt n}\quad(u\ne v),
 \qquad
 x_v=\frac{1+P_v/(\al n)+O(E/n)}{\sqrt n}.
\end{equation}
The constants are uniform over all edge weights in $[0,1]$.
\end{lemma}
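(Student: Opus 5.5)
Connectivity comes first. Every vertex is joined to all vertices outside its own part, except along lowered cross-edges. There are only $Z=o(n)$ of these and each part has size $n/r+o(n)$, so any two vertices in different parts have a common neighbour with weight bounded below. Hence the weighted graph is connected, and Perron--Frobenius gives a positive unit eigenvector $x$ with $\lam x=A_\omega x$, where $\lam=\lam(A_\omega)$.

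For the bound $M=O(n^{-1/2})$, write $\sigma_i=\sum_{w\in V_i}x_w$ and $\sigma=\sum_i\sigma_i\le\sqrt n$ by Cauchy--Schwarz. The eigenvalue equation at $u\in V_i$ reads
\[
 \lam x_u=\sigma-\sigma_i+\sum_{w\in V_i}\omega_{uw}x_w-\sum_{w\notin V_i}(1-\omega_{uw})x_w .
\]
The last sum is nonpositive, so $\lam x_u\le \sigma+\sum_{w\in N_H(u)}\omega_{uw}x_w$. To control the internal term, apply the equation twice. Then $\lam^2x_u\le\lam\sigma+\sum_{w}\omega_{uw}(\sigma+d_H(w)M)$, and the total weight is $P=O(n)$. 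This gives $\lam^2 x_u\le \lam\sigma+O(n)\sigma+O(\sum_w\omega_{uw}d_H(w))M$. If some vertex had $M\gg n^{-1/2}$, the hub term $\sum_w \omega_{uw}d_H(w)\le P=O(n)$ would force $n^2M\lesssim n^{3/2}+nM$, which contradicts $\lam^2\asymp n^2$. So $M=O(n^{-1/2})$.

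For the flat estimate, set $\bar x_i=\sigma_i/n_i$. From the equation, $\lam x_u=\sigma-\sigma_i+\epsilon_u$, where $|\epsilon_u|\le M(d_H(u)+d_D(u))$. Summing $|\epsilon_u|$ over $u$ gives $O((P+Z)n^{-1/2})$. Summing the equation over $V_i$ yields a linear system in the $\sigma_i$ whose matrix is a perturbation of the balanced one. Solving it with $\lam=\al n+O(1)$ and $|n_i-n/r|\le A$ gives $\sigma_i=(n_i/\sqrt n)(1+O(E_0/n))$. Substituting back gives $x_u=(\sigma-\sigma_i)/\lam+O(M(d_H(u)+d_D(u))/n)$.

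The main obstacle is the pointwise error. A vertex may have $d_H(u)$ as large as $P$, and then the error $M d_H(u)/n$ is only $O(P/n\cdot n^{-1/2})$, which is still within $O(E_0/n)n^{-1/2}$. For a vertex with $d_D(u)$ up to $Z$ the same holds. So \eqref{eq:flat-coordinates} follows uniformly, since $(\sigma-\sigma_i)/\lam=(1+O(E_0/n))/\sqrt n$ by the computation $(n-n_i)/(\al n)=1+O(A/n)$.

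For the hub version, run the same argument. The aggregate errors in the $\sigma_i$ now involve $P_v$, but only through the single coordinate $x_v$, which enters the $\sigma$-system with weight $M=O(n^{-1/2})$. Its contribution to the sum over $V_i$ is therefore $O(P_v n^{-1/2})$ spread over a part of size $\asymp n$. The delicate point is that this is not $o(1)$ relative to $\sigma_i$ when $P_v\asymp n$.

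To handle it, first treat the coordinates $u\ne v$. Their internal degree is at most $K+1$, plus possibly the one edge $uv$. Their pointwise error is therefore $O((K+Z+1)M/n)$ plus the contribution $x_v/\lam=O(n^{-3/2})$, which is absorbed into $O(E/n)n^{-1/2}$. With these flat coordinates fixed, the part sums $\sigma_j$ for $j\ne i$ (where $v\in V_i$) are flat as before. For $\sigma_i$, the summed internal edges at $v$ contribute $P_v x_v+\sum_{w\sim v}x_w$. This is a correction of relative order $P_v/n$ to a sum of size $\asymp\sqrt n$, which changes $\sigma-\sigma_i$ only by $O(P_v n^{-1/2})$ and hence $x_u$ by $O(P_v n^{-3/2})$. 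This appears to break \eqref{eq:hub-coordinates} for $u\ne v$, so a sharper bookkeeping is needed. The edges at $v$ add $x_v$ to $\lam x_w$ only for the $P_v$ neighbours $w$, each by $O(n^{-1/2})$. The total effect on $\sigma_i$ is then $O(P_v n^{-1/2})/\lam=O(P_v n^{-3/2})$, which is $O(n^{-1/2})$-negligible relative to $\sigma_i\asymp\sqrt n$, giving a relative error of $O(1/n)$. Then $x_u=(1+O(E/n))/\sqrt n$ for $u\ne v$.

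Finally, the equation at $v$ gives
\[
 \lam x_v=(\sigma-\sigma_i)+\sum_{w\sim_H v}\omega_{vw}x_w-O(d_D(v)M)=\frac{\lam}{\sqrt n}\bigl(1+O(E/n)\bigr)+\frac{P_v}{\sqrt n}\bigl(1+O(E/n)\bigr).
\]
Dividing by $\lam=\al n+O(1)$ yields $x_v=(1+P_v/(\al n)+O(E/n))/\sqrt n$. All constants depend only on the bounds $0\le\omega\le1$, so they are uniform in the weights.
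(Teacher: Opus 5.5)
Your route is the paper's: sum the eigenvector equations over each part to get $(\lambda+n_i)\sigma_i=n_i\sigma+R_i$, read pointwise estimates off the individual equations, and obtain $x_v$ from the equation at $v$. Your two-step bound for $M$ is valid. The paper instead uses Cauchy--Schwarz, $\lambda x_u\le(\sum_w a_{uw}^2)^{1/2}\le\sqrt n$, which does not even need $P=O(n)$.

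\textbf{Missing step: normalization.} The part-sum system cannot by itself give $\sigma_i=(n_i/\sqrt n)(1+O(E_0/n))$. Apart from the unknown, $x$-dependent errors $R_i$, it is homogeneous. So it only yields $\sigma_i=n_i\sigma/(\lambda+n_i)+O((P+Z)n^{-3/2})$, that is, the $\sigma_i$ in terms of $\sigma$. Summing these relations gives a consistency condition, not the value of $\sigma$. The scale comes from normalization. First derive $x_u=\sigma/n+O(E_0n^{-3/2})$ from the individual equations. Then insert this into $\sum_ux_u^2=1$ and use $\sigma\le\sqrt n$ to get $\sigma=\sqrt n(1+O(E_0/n))$. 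The hub case needs the same step. There, $x_v^2=O(1/n)$ (from $M=O(n^{-1/2})$) shows that the hub coordinate does not affect the normalization.

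\textbf{Hub paragraph.} It asserts, and then retracts, that the edges at $v$ break the estimate for $u\ne v$. The assertion is false because a factor of $\lambda$ was dropped. Those edges contribute at most $2P_vM=O(\sqrt n)$ to the summed equation $(\lambda+n_i)\sigma_i=n_i\sigma+R_i$. So they move $\sigma_i$ by $O(n^{-1/2})$, a relative error of $O(1/n)$, and they move $(\sigma-\sigma_j)/\lambda$ by only $O(n^{-3/2})$. In fact the crude bound $|R_j|\le 2(P+Z)M=O(\sqrt n)$ for every $j$ suffices, with no special bookkeeping for $v$; this is exactly what the paper does. The clean order is:
\begin{itemize}
\item $\sigma_j=n_j\sigma/(\lambda+n_j)+O(n^{-1/2})$;
\item $x_u=\sigma/(\lambda+n_j)+O(En^{-3/2})$ for $u\ne v$, since such $u$ has internal weight at most $K+1$;
\item normalization, as above;
\item your final display at $v$, which is correct.
\end{itemize}
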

\begin{proof}
Since every part has size $\Theta(n)$ and only $Z=o(n)$ cross-edge weights are lowered, every vertex has positive-weight degree at least $n-n_i-Z>(1/2+o(1))n$. Hence the positive-weight skeleton, and therefore the weighted graph, is connected for large $n$. The eigenvector equation and Cauchy--Schwarz give
$\lam(A_\omega)x_u\le\sqrt n$ for every $u$, and hence $M=O(n^{-1/2})$. Write
\[
 S_i=\sum_{u\in V_i}x_u,\qquad S=\sum_iS_i.
\]
On summing the eigenvector equations over $V_i$, every edited edge contributes the sum of its two endpoint coordinates, with the appropriate sign. Therefore
\begin{equation}\label{eq:part-sums}
 (\lam+n_i)S_i=n_iS+R_i,
 \qquad |R_i|\le2(P+Z)M=O(\sqrt n).
\end{equation}
In particular,
\begin{equation}\label{eq:part-sums-2}
 S_i=\frac{n_iS}{\lam+n_i}+O(n^{-1/2}).
\end{equation}

Assume first that $E_0=A+P+Z+1=o(n)$. In this case the sharper form of \eqref{eq:part-sums} is $|R_i|=O((P+Z)n^{-1/2})$, and hence
$S_i=n_iS/(\lam+n_i)+O((P+Z)n^{-3/2})$. For every vertex $u\in V_i$, the edit contribution in its eigenvector equation has absolute value at most $(P+Z)M$. Equations \eqref{eq:part-sums-2} and $\lam+n_i=n+O(A+1)$ therefore give
\[
 x_u=\frac Sn+O(E_0n^{-3/2})
\]
uniformly. The normalization condition gives $S=\sqrt n(1+O(E_0/n))$, proving \eqref{eq:flat-coordinates}.

For the second assertion, every vertex other than $v$ has internal degree at most $K+1$. Thus, for $u\in V_i\setminus\{v\}$, \eqref{eq:part-sums-2} and the individual eigenvector equation give
\begin{equation}\label{eq:nonhub-coordinate-proof}
 x_u=\frac{S}{\lam+n_i}+O(En^{-3/2})
 =\frac Sn\left(1+O\left(\frac{A+1}{n}\right)\right)+O(En^{-3/2}).
\end{equation}
Let $v\in V_1$. Summing \eqref{eq:nonhub-coordinate-proof} over its internal neighbors, with edge weights included, and then using the equation at $v$, gives
\begin{equation}\label{eq:hub-coordinate-proof}
 x_v=\frac Sn\left(1+\frac{P_v}{\al n}+O\left(\frac En\right)\right)+O(En^{-3/2}).
\end{equation}
Since $S\le\sqrt n$, equations \eqref{eq:nonhub-coordinate-proof} and \eqref{eq:hub-coordinate-proof}, followed by $\sum_ux_u^2=1$, yield
$S=\sqrt n(1+O(E/n))$. Substitution proves \eqref{eq:hub-coordinates}.
\end{proof}

\subsection{Stability and limit neighborhoods}
Let $A_0$ be the adjacency matrix of a complete $r$-partite graph, let $\mu$ be its largest eigenvalue, and let $x$ be its unit positive eigenvector. If $E$ is symmetric and $\|E\|=o(\mu)$, the Schur complement on $x^\perp$ gives
\begin{equation}\label{eq:schur-upper}
 \lam(A_0+E)\le \mu+x^{\mathsf T}Ex+
 \frac{\|Ex\|^2}{\mu-2\|E\|}.
\end{equation}
The other eigenvalues of $A_0$ are nonpositive, so the denominator is valid.

The following statement refines the first-order perturbation estimate in \cite[Theorem~1.5]{FLLM}. The term $\sum_v s_v^2$ is retained because it separates the competing linear constructions.

\begin{lemma}\label{lem:linear-signed}
Fix $r\ge3$, $C_0>0$ and $L_0\ge0$. If $N_{K_{r+1}}(G)\le C_0n^r$ and $\lam(G)\ge\al n-L_0$, then every maximum $r$-cut satisfies $p+z=O(n)$ and $S_2=O(n)$.
Moreover, with $s_v=d_H(v)-d_D(v)$,
\begin{equation}\label{eq:signed-expansion}
 \lam(G)=\al n-\frac{\al S_2}{n}
 +\frac{2(p-z)}n+\frac1{\al n^2}\sum_vs_v^2+o(1).
\end{equation}
\end{lemma}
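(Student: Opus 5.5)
Write $\lam=\lam(G)$, and let $x$ be the unit Perron vector of $A_0=A(K_r(n_1,\ldots,n_r))$, with $\mu=\lam(A_0)$. Decompose $A(G)=A_0+E$, where $E=A(H)-A(D)$. The plan has three parts: a priori bounds from stability, an upper bound from the Schur estimate \eqref{eq:schur-upper}, and a matching lower bound from the Rayleigh quotient at a corrected test vector.

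\emph{A priori bounds.} First I invoke the stability theorem \cite[Theorem~2.8]{FLLM}. Together with the hypotheses $N_{K_{r+1}}(G)\le C_0n^r$ and $\lam\ge\al n-L_0$, it shows that a maximum $r$-cut differs from a complete $r$-partite graph by $o(n^2)$ edits. To upgrade this to $O(n)$, I compare $\lam$ with $\mu+x^{\mathsf T}Ex+\dots$. We have $\mu\le\al n-\al S_2/n+O(1)$ for the unbalanced partition, and $x^{\mathsf T}Ex\approx 2(p-z)/n$ at first order. The clique bound $N_{K_{r+1}}\le C_0n^r$ does not directly bound $p$, so instead I use the maximum-cut property: each internal degree is at most the degree to any other part. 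I also use Lemma~\ref{lem:standard-completions}, which shows that the number of $K_{r+1}$ is at least about $p\,n^{r-1}$ when $z$ is small relative to $n$. This gives $p=O(n)$. The inequality $\lam\ge\al n-L_0$ together with \eqref{eq:schur-upper} then forces $z=O(n)$ and $S_2=O(n)$. I would follow the argument behind \cite[Theorem~1.5]{FLLM} and only check that it produces $O(n)$ rather than $o(n)$.

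\emph{Upper bound.} Apply \eqref{eq:schur-upper}. First,
\[
\mu=\al n-\al S_2/n+O(S_2^2/n^3+1/n),
\]
obtained by expanding the secular equation $\sum_i n_i/(\mu+n_i)=1$ to second order. Next,
\[
x^{\mathsf T}Ex=\sum_{uv\in H}2x_ux_v-\sum_{uv\in D}2x_ux_v=2(p-z)/n+o(1),
\]
since the coordinates of $x$ equal $n^{-1/2}(1+O(S_2^{1/2}/n))$. Finally, $Ex$ has $v$-coordinate $s_v/\sqrt n+o(\cdot)$, so
\[
\|Ex\|^2=\sum_v s_v^2/n\,(1+o(1)),
\]
and dividing by $\mu-2\|E\|=\al n(1+o(1))$ gives $\sum_v s_v^2/(\al n^2)$. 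The condition $\|E\|=o(n)$ holds because $p+z=O(n)$: a graph with $O(n)$ edges has spectral norm at most $O(\sqrt n)$.

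\emph{Lower bound.} Use the Rayleigh quotient at $y=x+\frac{1}{\mu}(I-xx^{\mathsf T})Ex$. This is the first-order eigenvector correction. It gives
\[
\lam\ge\mu+x^{\mathsf T}Ex+\|(I-xx^{\mathsf T})Ex\|^2/\mu-O(\|E\|\,\|Ex\|^2/\mu^2).
\]
The projection removes only $(x^{\mathsf T}Ex)^2=O(1)$ from $\|Ex\|^2$, which contributes $o(1)$ after dividing by $\mu$. The cubic error is $O(\sqrt n\cdot n/n^2)=o(1)$.

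The step I expect to be the main obstacle is the a priori reduction $p+z=O(n)$. A priori $H$ might contain a vertex of large internal degree, and it must be ruled out that a large positive $\sum s_v^2$ compensates for $z$. I would handle this with the max-cut degree condition, which gives $d_H(v)\le d_{V_j}(v)$ for every other part $V_j$, combined with the completion count of Lemma~\ref{lem:standard-completions}. A vertex with large internal degree creates $\Omega(d_H(v)n^{r-1})$ cliques unless it has many missing cross-edges, and in that case $s_v$ is small.
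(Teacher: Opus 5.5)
Your expansion step is sound once $p+z=O(n)$ and $S_2=O(n)$ are known. The bound \eqref{eq:schur-upper} gives the upper estimate, and the Rayleigh quotient at $x+\mu^{-1}QEx$, with $Q=I-xx^{\mathsf T}$, gives the matching lower estimate. This is a legitimate alternative to the paper's exact Schur-complement formula. You omit the term $w^{\mathsf T}A_0w\le0$ for $w=\mu^{-1}QEx$, but it is only $O(n^{-1})$, because the part-constant components of $QEx$ have norm $O(1)$.

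\textbf{The gap: the a priori step is circular.} This step carries the real content of the lemma, and as you sketch it the argument does not close.
\begin{itemize}
\item Lemma~\ref{lem:standard-completions} yields $p=O(n)$ only when its error $C_rpzn^{r-2}$ is small compared with $pn^{r-1}$, i.e.\ only when $z\le cn$. At that stage stability gives only $z=o(n^2)$.
\item Conversely, \eqref{eq:schur-upper} can bound $z$ only after $p$ is bounded.
\item Even then, the natural estimate $\sum_v d_D(v)^2\le 2z\max_v d_D(v)\le(\alpha+o(1))\,2zn$ lets the quadratic term in \eqref{eq:schur-upper} reach $2z/n$. That cancels the $-2z/n$ from $x^{\mathsf T}Ex$, so no bound on $z$ follows.
\item Half of this cancellation is genuine. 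If one vertex loses all its cross-edges, then $z\approx\alpha n$, yet $\lambda$ drops only by about $\alpha=z/n$.
\item Your proposed remedy points the wrong way. A vertex with many missing cross-edges has \emph{large} $|s_v|$, and it is $s_v^2$ that enters.
\item Pointing to \cite[Theorem~1.5]{FLLM} without checking that it delivers exactly this bound does not fill the gap.
\end{itemize}

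\textbf{The missing idea: a decoupling into two linear inequalities with compatible constants.} This is what the paper does.
\begin{itemize}
\item Remove the set $X$ of vertices with internal degree at least $\xi n$. By the maximum-cut property each such vertex has at least $\xi n$ neighbours in every part, so it lies in $\Omega(n^r)$ copies of $K_{r+1}$. The budget $C_0n^r$ then gives $|X|=O(1)$, and deleting $X$ lowers $\lambda$ by only $O(1)$.
\item In $G-X$ (order $N$, edit counts $p',z'$, internal degrees at most $\beta N$), let $Y$ be the $o(N)$ vertices with missing degree at least $\theta N$.
\item An internal edge avoiding $Y$ keeps $\Omega(N^{r-1})$ standard completions even though $z'=o(N^2)$. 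Missing edges at its ends destroy at most $2\theta N^{r-1}$, and the others destroy $O(z'N^{r-3})$. Hence there are $O(N)$ such edges.
\item Edges meeting $Y$ number at most $\beta N|Y|\le(2\beta/\theta)z'$. Together these give $p'\le C_1N+(2\beta/\theta)z'$.
\item On the spectral side, $\sum_{v\in Y}d_D(v)\le z'+o(z')$ yields $\sum_v d_D(v)^2\le(\alpha+2\theta+o(1))Nz'$, about half the naive bound.
\item Combined with $\sum_v d_H(v)^2\le2\beta Np'$, the Schur estimate becomes $\lambda(G-X)\le\alpha N+3p'/N-z'/(2N)$, i.e.\ $z'\le 6p'+O(N)$.
\item Choosing $12\beta/\theta<1/2$ closes the two inequalities and gives $p'+z'=O(N)$.
\end{itemize}
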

\begin{proof}
Since $C_0n^r=o(n^{r+1})$, the spectral supersaturation-stability theorem of Fang, Li, Lin and Ma \cite[Theorem~2.8]{FLLM} gives a partition for which $G$ differs from $T_{n,r}$ in $o(n^2)$ edges. A maximum $r$-cut has no more internal edges than this partition. Also $e(G)=e(T_{n,r})+o(n^2)$. Hence, for the chosen maximum cut, $p+z=o(n^2),\qquad n_i=n/r+o(n)$.
Indeed,
\[
 \sum_{i<j}n_in_j\ge e(G)-p=e(T_{n,r})-o(n^2),
 \qquad
 \sum_i(n_i-n/r)^2=\al n^2-2\sum_{i<j}n_in_j.
\]

Put $\theta=1/(100r)$ and $\xi=\theta/1000$. Let $X$ be the set of vertices with internal degree at least $\xi n$. If $v\in X$, then the maximum-cut property gives at least $\xi n$ neighbors of $v$ in every part. We choose an internal neighbor of $v$ and one neighbor in each other part. This gives at least $(\xi n)^r$ choices. At most $zn^{r-2}=o(n^r)$ of them are lost because of missing cross-edges. Thus every $v\in X$ lies in at least $\xi^rn^r/2$ copies of $K_{r+1}$ for large $n$. Since one copy contains at most $r+1$ vertices of $X$, double counting the incidences between $X$ and these copies gives $|X|\le 2(r+1)C_0\xi^{-r}=O(1)$.

Delete $X$ and write $N=n-|X|$. Let $p',z'$ be the two edit counts in $G-X$. The remaining maximum internal degree is at most $\beta N$, where $\beta=2\xi$. If $y$ is a unit Perron vector of $G$, then $y_v\le\sqrt n/\lam(G)=O(n^{-1/2})$. Put $a=\sum_{v\in X}y_v^2=O(n^{-1})$. Using the restriction of $y$ to $G-X$ and the eigenvalue equations on $X$, we obtain $\lam(G-X)\ge \lam(G)\frac{1-2a}{1-a}\ge \lam(G)-O(1)$.
Hence
\begin{equation}\label{eq:delete-X}
 \lam(G-X)\ge \al N-L_1
\end{equation}
for a fixed $L_1$.

Let $a_v,b_v$ be the internal and missing-cross degrees in $G-X$, and put $Y=\{v:b_v\ge\theta N\}$.
Then $|Y|\le2z'/(\theta N)=o(N)$. Consider an internal edge $uv$ with both ends outside $Y$. Before cross-edges are removed it has $(r^{-1}+o(1))^{r-1}N^{r-1}$ standard completions. Missing edges incident with $u$ or $v$ destroy at most $2\theta N^{r-1}$ of them, and all other missing edges destroy at most $O_r(z'N^{r-3})=o(N^{r-1})$ of them. Taking $\theta$ sufficiently small, every such edge has at least $c'_rN^{r-1}$ standard completions. These families are disjoint by Lemma~\ref{lem:standard-completions}. Internal edges meeting $Y$ number at most $\beta N|Y|$. Hence
\begin{equation}\label{eq:count-contraction-short}
 p'\le C_1N+\frac{2\beta}{\theta}z'.
\end{equation}

Let $A_0$ be the complete multipartite base on the same parts and $E=A(G-X)-A_0$. Its unit Perron coordinates are $(1+o(1))/\sqrt N$, because a coordinate in part $i$ is proportional to $(\mu+n_i)^{-1}$. Since $\|E\|\le\sqrt{2(p'+z')}=o(N)$, $x^{\mathsf T}Ex=\frac{2(p'-z')+o(p'+z')}{N}$.
Moreover,
\[
 \|Ex\|^2\le\frac1N\left(\sum_v(a_v-b_v)^2+o\left(\sum_v(a_v+b_v)^2\right)\right).
\]
We have $\sum_va_v^2\le2\beta Np'$. Vertices outside $Y$ contribute at most $2\theta Nz'$ to $\sum_vb_v^2$. On $Y$, $\sum_{v\in Y}b_v\le z'+e_D(Y)\le z'+\tfrac12|Y|^2=z'+o(z')$,
and $b_v\le(\al+o(1))N$. Thus $\sum_vb_v^2\le(\al+2\theta+o(1))Nz'$.
Using $(a_v-b_v)^2\le a_v^2+b_v^2$ in \eqref{eq:schur-upper}, and then taking $n$ large, gives $\lam(G-X)\le \al N+\frac{3p'}N-\frac{z'}{2N}$.
Together with \eqref{eq:delete-X}, this yields $z'\le6p'+2L_1N$. Putting this in \eqref{eq:count-contraction-short}, with $12\beta/\theta<1/2$, gives $z'=O(N)$ and then $p'=O(N)$. Restoring the bounded set $X$ adds only $O(n)$ edits, so $p+z=O(n)$.

Now we prove $S_2=O(n)$. For the complete multipartite base, if $F_0(x)=\sum_i n_i/(x+n_i)$ and $\mu$ is its root, then $1-F_0(\al n)=\frac{\al}{n}\sum_i\frac{(n_i-n/r)^2}{n+n_i-n/r}$.
Since $|F_0'(x)|=O(n^{-1})$ when $x=\Theta(n)$, this implies $\al n-\mu\ge c''_rS_2/n$.
Now $p+z=O(n)$, $|x^{\mathsf T}Ex|=O(1)$, and $\|Ex\|^2=O(n)$. Formula \eqref{eq:schur-upper} gives $\lam(G)\le\mu+O(1)$; hence $S_2=O(n)$.

It remains to prove the expansion. Let $Q=I-xx^{\mathsf T}$. The Schur formula and the identity for the inverse give
\[
 \lam(G)=\mu+x^{\mathsf T}Ex+
 \langle QEx,(\mu I-QA_0Q)^{-1}QEx\rangle
 +O\left(\frac{\|E\|\,\|Ex\|^2}{n^2}\right).
\]
Here $\|E\|=O(\sqrt n)$ and $\|Ex\|^2=O(n)$, so the remainder is $O(n^{-1/2})$. The base coordinate in part $i$ is $n^{-1/2}(1+O(n^{-1/2}))$, so
\[
 x^{\mathsf T}Ex=\frac{2(p-z)}n+O(n^{-1/2}),
 \qquad
 \|Ex\|^2=\frac1n\sum_vs_v^2+O(\sqrt n).
\]
On the subspace whose coordinate sum in each part is zero, $A_0=0$ and the inverse is $\mu^{-1}I$. The projection of $Ex$ onto the remaining part-constant directions has norm $O(1)$, so those directions contribute only $O(n^{-1})$. Therefore the quadratic term is $\frac1{\al n^2}\sum_vs_v^2+O(n^{-1/2})$.
Expanding the base equation $\sum_i n_i/(\mu+n_i)=1$ gives
$\mu=\al n-\al S_2/n+O(n^{-1/2})$, which proves \eqref{eq:signed-expansion}.
\end{proof}

For $0\le\gamma\le1$, define $\phi_r(\gamma)=2\gamma/r+\gamma^2/[r(r-1)]=f_r(\gamma/r)$.

\begin{theorem}\label{thm:profiles}
Let $n=rm+s\to\infty$, $1\le q\le m$, and $q/m\to\gamma$. Suppose $N_{K_{r+1}}(G)\le q c_r(n)$ and $\lam(G)\ge\al n-O(1)$. Then $\limsup(\lam(G)-\al n)\le\phi_r(\gamma)$.
If in addition $\lam(G)\ge\al n+\phi_r(q/m)-o(1)$, then every maximum $r$-cut satisfies
\begin{equation}\label{eq:limit-entry}
 z=o(n),\qquad p=q+o(n),\qquad S_2=o(n).
\end{equation}
If $\gamma>0$, all but $o(n)$ internal edges meet one vertex.
\end{theorem}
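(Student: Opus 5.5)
The plan is to combine the signed expansion of Lemma~\ref{lem:linear-signed} with the counting bound of Lemma~\ref{lem:standard-completions}. The counting bound converts the clique constraint into an upper bound on $p$, and the expansion then turns that bound into the profile $\phi_r$.

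\emph{Setup.} Since $N_{K_{r+1}}(G)\le qc_r(n)=O(n^r)$ and $\lam(G)\ge\al n-O(1)$, Lemma~\ref{lem:linear-signed} applies to every maximum $r$-cut. It gives $p+z=O(n)$, $S_2=O(n)$, and \eqref{eq:signed-expansion}. Because $S_2=O(n)$, each $n_i=m+O(\sqrt n)$, so
\[
\prod_{j\ne i}n_j=c_r(n)\bigl(1+O(n^{-1/2})\bigr).
\]
Inserting this into \eqref{eq:standard-completions} and dividing by $c_r(n)$ gives
\[
 p\le q+O\Bigl(n^{-1}\sum_{ab\in D}\bigl(d_H(a)+d_H(b)\bigr)\Bigr)+O(pz/n^2)+o(n).
\]
Here the last error term is $O(1)$.

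\emph{Bounding the quadratic term.} Write $\sum_v s_v^2\le\sum_v d_H(v)^2+\sum_v d_D(v)^2-2\sum_v d_H(v)d_D(v)$.

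For the internal part, put $\Delta=\max d_H$. Then $\sum_v d_H(v)^2\le \Delta^2+2p\Delta_2$, where $\Delta_2$ is the second largest internal degree. In any case the sum is at most $p^2+p$, since a graph with $p$ edges and parts of linear size is maximized by a star.

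For the missing-edge part, $\sum_v d_D(v)^2\le 2z\max_v d_D(v)$, with $d_D(v)\le(\al+o(1))n$. Hence $\frac{1}{\al n^2}\sum_v d_D^2\le(2+o(1))z/n$. This is exactly cancelled by the $-2z/n$ in \eqref{eq:signed-expansion}, but only when missing edges concentrate on vertices of missing degree close to $\al n$. I expect this borderline to be the main obstacle.

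\emph{Handling the borderline.} I would first try the device from the proof of Lemma~\ref{lem:linear-signed}. Let $Y$ be the set of vertices with $d_D\ge\theta n$; then $|Y|=O(1)$ since $z=O(n)$. Vertices outside $Y$ contribute at most $2\theta z/(\al n)$, which leaves a net $-(2-3\theta/\al)z/n$. For $v\in Y$, the maximum-cut property forces $d_H(v)\le$ its degree in every other part. The cross term $-2d_Hd_D$ then absorbs the situation where $v$ carries both large $d_H$ and large $d_D$. If $v$ carries large $d_D$ but small $d_H$, deleting $v$ and comparing $\lam$ costs only $O(1/n)$ per such vertex, because the hub coordinate bound of Lemma~\ref{lem:perron-coordinates} gives $x_v^2=O(1/n)$. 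This reduces to the case $Y=\emptyset$.

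There the missing edges meeting internal edges are charged in the counting bound. Each missing edge at the hub allows at most $O(p/n)$ extra internal edges, by the hub estimate of Lemma~\ref{lem:standard-completions}. Meanwhile it costs $(2-O(\theta))/n$ in the expansion, and with $p\le(1+o(1))m$ the count gain is beaten by the spectral loss. Combining everything,
\[
\lam-\al n\le\frac{2p}{n}+\frac{p^2}{\al n^2}-\frac{\al S_2}{n}-c\frac zn+o(1),
\qquad p\le q+o(n).
\]
Since $p/n\to\gamma/r$ gives $2p/n+p^2/(\al n^2)\to f_r(\gamma/r)=\phi_r(\gamma)$, this yields the $\limsup$ bound.

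\emph{Stability.} If $\lam\ge\al n+\phi_r(q/m)-o(1)$, every dropped term must be $o(1)$, which gives \eqref{eq:limit-entry}: $S_2=o(n)$, $z=o(n)$ and $p=q+o(n)$ (the latter because $f_r$ is strictly increasing). Moreover $\sum_v d_H^2=p^2-o(n^2)$. When $\gamma>0$ we have $p=\Theta(n)$, and the bound $\sum_v d_H^2\le\Delta^2+2p\Delta_2$ together with $\Delta+\Delta_2\le p+1$ forces $\Delta=p-o(n)$. Thus all but $o(n)$ internal edges meet one vertex.
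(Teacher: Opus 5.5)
There is a genuine gap, at the step you pass over quickly: $p\le q+o(n)$. The error in your count bound is $n^{-1}\sum_{ab\in D}(d_H(a)+d_H(b))=n^{-1}\sum_v d_H(v)d_D(v)$. This is of order $n$ as soon as one vertex has linear internal degree and linearly many missing cross-edges, which is exactly the hub configuration the theorem is about. Under the hypotheses of the first assertion the inequality is false. Take $s=0$ with $m$ even. Give a vertex $v\in V_1$ of $T_{n,r}$ exactly $m/2$ internal neighbours, and delete its edges to $m/2$ vertices of $V_2$. Every $K_{r+1}$ contains $v$, so $N_{K_{r+1}}(G)=m^r/4$ and $q=\lceil m/4\rceil$ is admissible. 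No edge lies in more than $m^{r-1}/2$ of these copies, so every $r$-partition has at least $m/2$ internal edges and the Tur\'an partition is a maximum cut. Moreover $s_v=0$, so $\lambda(G)=\alpha n+o(1)$ by \eqref{eq:signed-expansion}. Yet $p=m/2$.

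Your trade-off sketch does not rescue the final display either. That display bounds $\sum_v s_v^2$ by $p^2$ and so discards the cross term $-2d_H(v)d_D(v)$ at the hub. Consider a hub with internal degree $am$ and $bm$ missing edges into one part. The sharp budget is $a(1-b)\le\gamma$. Inserting $a=\gamma/(1-b)$ into your right side gives $\phi_r(\gamma)+\frac br\bigl(2\gamma+\frac{2\gamma^2}{r-1}-c\bigr)+O(b^2)$. Since $c\le2$, this exceeds $\phi_r(\gamma)$ for $\gamma$ near $1$. The true contribution is $\frac2r(a-b)+\frac{(a-b)^2}{r(r-1)}$, so it is the \emph{signed} degree $a-b$ that must be charged to the clique budget.

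That is the missing idea, and it is how the paper argues. Order the vertices by edit degree and take limits of the profiles $x_{j,i}=\lim|N(v_j)\cap V_i|/m$ of the first $K$ vertices. Put $t_j=\sum_ix_{j,i}-(r-1)=\lim s_{v_j}/m$ and $\pi_j=\prod_ix_{j,i}$, and let $P,Z$ be the edits not seen at any fixed number of vertices. Then \eqref{eq:signed-expansion} becomes $\Lambda=-\alpha\sigma+\frac2r(P-Z+\sum_jt_j)+\frac1{r(r-1)}\sum_jt_j^2$, with a tail of $O(1/K)$. Count the cliques through $v_j$ that use one neighbour of $v_j$ in \emph{every} part, its own included. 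This gives $P+\sum_j\pi_j\le\gamma$. The inequality $\prod_ix_i\ge\sum_ix_i-(r-1)$ gives $\pi_j\ge(t_j)_+$, which closes the chain at $\phi_r(\gamma)$. In the equality case, the equality condition of that inequality together with the maximum-cut condition forces $b_j=0$. This is what yields $z=o(n)$ and a single hub.

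Three smaller steps also fail.
\begin{itemize}
\item Your ``borderline'' for $\sum_vd_D(v)^2$ comes from double counting. Since $\sum_{v\in Y}d_D(v)\le z+|Y|^2/2$, the missing edges alone always cost about $z/n$, as in the proof of Lemma~\ref{lem:linear-signed}.
\item Deleting a vertex moves $\lambda$ by order $\lambda x_v^2=\Theta(1)$, not $O(1/n)$. That is exactly the scale of $\lambda-\alpha n$ you are tracking, and the deletion also changes $n$ and the budget.
\item Your last step is vacuous. With $\Delta_2\le p+1-\Delta$, the inequality $\Delta^2+2p\Delta_2\ge p^2-o(n^2)$ reduces to $(p-\Delta)^2+2p\ge-o(n^2)$. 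Use $M_2(H)=o(n^2)$ together with Lemma~\ref{lem:walks} instead.
\end{itemize}
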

\begin{proof}
Lemma~\ref{lem:linear-signed} applies. Fix an arbitrary sequence of maximum cuts; then $p+z=O(n)$ and $S_2=O(n)$. Order the vertices by decreasing edit degree, say $v_1,v_2,\ldots$, and pass to a subsequence and use a diagonal argument so that $q/m$, $S_2/n$, $p/m$, $z/m$, $\lam(G)-\al n$, and all the ratios of neighborhood sizes below converge. If $v_j$ lies in part $i_j$, set
\[
 x_{j,i}=\lim\frac{|N(v_j)\cap V_i|}{m},\qquad
 a_j=x_{j,i_j},\qquad
 b_j=\sum_{i\ne i_j}(1-x_{j,i}),
\]
Put $t_j=a_j-b_j=\sum_i x_{j,i}-(r-1)$ and $\pi_j=\prod_i x_{j,i}$.
The maximum-cut condition gives $x_{j,i_j}\le x_{j,i}$ for every $i$.

Delete $v_1,\ldots,v_K$, and let $p_K,z_K$ be the remaining edit counts. Edits with both ends among the deleted vertices contribute only $O(K^2)$, so for every fixed $K$,
\[
 \lim\frac{p_K}{m}=\lim\frac pm-\sum_{j\le K}a_j\ge0,
 \qquad
 \lim\frac{z_K}{m}=\lim\frac zm-\sum_{j\le K}b_j\ge0.
\]
Hence $\sum_j(a_j+b_j)<\infty$. Put
\begin{equation}\label{eq:limit-remaining}
 P=\lim p/m-\sum_ja_j\ge0,
 \qquad
 Z=\lim z/m-\sum_jb_j\ge0.
\end{equation}
The numbers $P$ and $Z$ count the edges that are not seen at any fixed number of vertices.

Let $d_*(v)=d_H(v)+d_D(v)$. Since $d_*(v_{K+1})\le 2(p+z)/(K+1)$ and $\sum_vd_*(v)=2(p+z)$, $\frac1{m^2}\sum_{j>K}s_{v_j}^2 \le \frac{C_r}{K+1}$.
Writing $S_2/n\to\sigma$ and first taking $n\to\infty$, then $K\to\infty$, formula \eqref{eq:signed-expansion} gives
\begin{equation}\label{eq:limit-spectrum-short}
 \Lambda=-\al\sigma+\frac2r\left(P-Z+\sum_jt_j\right)
 +\frac1{r(r-1)}\sum_jt_j^2,
\end{equation}
where $\Lambda=\lim(\lam(G)-\al n)$.

We now use the clique bound. Fix $K$ and $j\le K$. Count the cliques containing $v_j$, no other vertex among $v_1,\ldots,v_K$, and one neighbor of $v_j$ in every part. Divided by $m^r$, their number tends to $\pi_j$; missing cross-edges among the other selected vertices spoil only $O_r(zn^{r-2})=O(n^{r-1})$ choices. These $K$ families are disjoint. Every internal edge outside the first $K$ vertices has $\bigl(1-O_r(K^{-1})-o(1)\bigr)m^{r-1}$
standard completions avoiding those vertices, because every remaining edit degree is $O(n/K)$. Missing cross-edges incident with an endpoint of the internal edge remove $O_r(n^{r-1}/K)$ choices, and all other missing cross-edges remove $O_r(zn^{r-3})=O(n^{r-2})$ choices. Thus the same displayed lower bound remains valid. The resulting families are disjoint from one another and from the first $K$ families by Lemma~\ref{lem:standard-completions}. Since $qc_r(n)/m^r\to\gamma$, taking the limits in the order $n\to\infty$, then $K\to\infty$, yields
\begin{equation}\label{eq:limit-budget-short}
 P+\sum_j\pi_j\le\gamma.
\end{equation}

For $x_1,\ldots,x_r\in[0,1]$,
\begin{equation}\label{eq:product-bound}
 \prod_i x_i\ge\sum_i x_i-(r-1).
\end{equation}
Indeed, with $y_i=1-x_i$, expand $1-\prod_i(1-y_i)$ one by one. Equality holds exactly when at most one $x_i$ is smaller than one. Thus $\pi_j\ge(t_j)_+$. Put $T=\sum_j(t_j)_+, \qquad U=\sum_j(-t_j)_+$.
For $-(r-1)\le t<0$, $\frac{2t}{r}+\frac{t^2}{r(r-1)}\le\frac tr$.
Since $P+T\le\gamma$, equations \eqref{eq:limit-spectrum-short} and \eqref{eq:limit-budget-short} give
\begin{align}
 \Lambda
 &\le-\al\sigma-\frac{2Z}{r}-\frac Ur
 +\frac{2(P+T)}r+\frac{\sum_j(t_j)_+^2}{r(r-1)}\notag\\
 &\le-\al\sigma-\frac{2Z}{r}-\frac Ur
 +\frac{2(P+T)}r+\frac{T^2}{r(r-1)}\notag\\
 &\le\phi_r(\gamma).\label{eq:limit-chain}
\end{align}
The chain \eqref{eq:limit-chain} proves the upper bound.

Under the stronger lower bound, equality must hold in every step. Hence $\sigma=Z=U=0, \qquad P+T=\gamma, \qquad \sum_j(t_j)_+^2=\gamma^2$.
If $\gamma>0$, then $P=0$ and exactly one number $t_j$ is positive; its value is $\gamma$. If $\gamma=0$, then $P=T=0$. Equality in the count bound gives $\pi_j=t_j\ge0$ for every $j$. By the equality case of \eqref{eq:product-bound}, for each $j$ all but at most one of the numbers $x_{j,i}$ are equal to one. The maximum-cut condition shows that the possible smaller number is $x_{j,i_j}$. Thus $b_j=0$ and $a_j=t_j$ for every $j$. Equation \eqref{eq:limit-remaining} gives $z=o(n)$ and $p=q+o(n)$, and $\sigma=0$ gives $S_2=o(n)$. When $\gamma>0$, the vertex with positive $t_j$ has internal degree $\gamma m+o(n)=p-o(n)$.

The same argument applies to every subsequence. Hence the conclusions hold for the original sequence and for every maximum cut.
\end{proof}

\begin{corollary}\label{cor:global-entry}
Fix $r\ge3$, $L\ge0$ and $\eps>0$. For all sufficiently large $n=rm+s$, every graph with $1\le q\le m$, $N_{K_{r+1}}(G)\le q c_r(n)$ and $\lam(G)\ge\al n+f_r(q/n)-L/n$ has a maximum $r$-cut with $p\le n$ and $z\le\eps n$.
\end{corollary}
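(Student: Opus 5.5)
We argue by contradiction and compactness, reducing everything to the equality case of Theorem~\ref{thm:profiles}. Suppose the statement fails for some fixed $r$, $L$ and $\eps$. Then there is a sequence of graphs $G$ on $n=rm+s\to\infty$ vertices and integers $1\le q\le m$ with the following properties: $N_{K_{r+1}}(G)\le qc_r(n)$, $\lam(G)\ge\al n+f_r(q/n)-L/n$, and \emph{no} maximum $r$-cut of $G$ has both $p\le n$ and $z\le\eps n$. Since $q/m\in(0,1]$, we may pass to a subsequence along which $q/m\to\gamma\in[0,1]$.

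First we check the hypotheses of Theorem~\ref{thm:profiles}. Because $f_r\ge0$, we have $\lam(G)\ge\al n-L/n\ge\al n-O(1)$. Next we compare the lower bound with $\phi_r(q/m)=f_r(q/(rm))$. Since $rm=n-s$ with $0\le s<r$ and $q\le m$,
\[
\frac{q}{rm}-\frac qn=\frac{qs}{rmn}=O(n^{-1}).
\]
Also $f_r$ is Lipschitz on $[0,1]$. Hence $f_r(q/n)=\phi_r(q/m)+O(n^{-1})$, and therefore
\[
\lam(G)\ge\al n+\phi_r(q/m)-o(1).
\]
This is exactly the stronger hypothesis in Theorem~\ref{thm:profiles}.

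That theorem, applied along our subsequence, shows that every maximum $r$-cut satisfies \eqref{eq:limit-entry}: $z=o(n)$, $p=q+o(n)$ and $S_2=o(n)$. Since $q\le m\le n/r$ and $r\ge3$, we get $p\le n/r+o(n)<n$ for large $n$. Likewise $z=o(n)<\eps n$ for large $n$. So for all large members of the subsequence, every maximum $r$-cut, and in particular some maximum $r$-cut, satisfies $p\le n$ and $z\le\eps n$. This contradicts the choice of the sequence.

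The only delicate point is the uniformity needed to turn the subsequential conclusion of Theorem~\ref{thm:profiles} into a bound for \emph{all} sufficiently large $n$. The contradiction argument handles this. A failure for infinitely many $n$ produces a sequence, the sequence has a convergent subsequence of $q/m$, and Theorem~\ref{thm:profiles} states its conclusion for every maximum cut along any such sequence. One should also confirm that the $o(1)$ in the stronger hypothesis of Theorem~\ref{thm:profiles} may be any quantity tending to zero along the sequence. Its proof uses only the limit $\Lambda\ge\phi_r(\gamma)$, which is what our estimate provides.
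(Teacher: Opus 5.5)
Your proof is correct and follows the paper's own argument essentially verbatim. You take a counterexample sequence, pass to a subsequence with $q/m\to\gamma$, use $\phi_r(q/m)=f_r(q/n)+O_r(n^{-1})$ to enter the equality case of Theorem~\ref{thm:profiles}, and read off $z=o(n)$ and $p=q+o(n)<n$ from \eqref{eq:limit-entry}. Your Lipschitz estimate and your remarks on uniformity spell out details that the paper leaves implicit.
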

\begin{proof}
Suppose to the contrary that the corollary is false. Take a counterexample sequence and pass to a subsequence with $q/m\to\gamma$. Since
$\phi_r(q/m)=f_r(q/n)+O_r(n^{-1})$, the spectral hypothesis meets the equality case assumption of Theorem~\ref{thm:profiles}. Equation~\eqref{eq:limit-entry} gives $z=o(n)$ and $p=q+o(n)<n$.
\end{proof}

The floor-plus-square term below is the sparse quasi-star profile familiar from the classical problem of maximizing the sum of squared degrees; see \cite{AhlswedeKatona,AbregoEtAl}. Here it is derived from the strict clique budget and the near-Tur\'an spectral structure.

\begin{theorem}\label{thm:global-asymptotic}
Fix $r\ge3$ and $\Gamma>0$. For $\gamma\ge0$, set $j=\lfloor\gamma\rfloor$, $\vartheta=\gamma-j$, and $\Phi_r(\gamma)=2\gamma/r+(j+\vartheta^2)/[r(r-1)]$.
Uniformly for $1\le q\le\Gamma\lfloor n/r\rfloor$,
\begin{equation}\label{eq:general-envelope}
 \max\{\lam(G):|V(G)|=n,\ N_{K_{r+1}}(G)<q c_r(n)\}
 =\al n+\Phi_r(q/m)+o(1).
\end{equation}
\end{theorem}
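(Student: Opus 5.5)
The proof has two halves. For the lower bound I build a quasi-star extension of a Tur\'an graph and read off its spectral radius from Lemma~\ref{lem:linear-signed}. For the upper bound I rerun the limit-profile argument of Theorem~\ref{thm:profiles} with $\gamma$ now ranging over $[0,\Gamma]$; the only new ingredient is the cap $t_j\le1$ on each hub, which turns the final optimization into a floor-plus-square bound. Uniformity in $q$ follows by contradiction along subsequences with $q/m\to\gamma\in[0,\Gamma]$, exactly as in Corollary~\ref{cor:global-entry}.

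\emph{Lower bound.} Write $\gamma=q/m$, $j=\lfloor\gamma\rfloor$ and $\vartheta=\gamma-j$.
\begin{itemize}
\item Start from $T_{n,r}$. Inside one largest part $V_1$, choose $j+1$ hub vertices $h_0,\ldots,h_j$ that are pairwise nonadjacent.
\item Join $h_1,\ldots,h_j$ to all non-hub vertices of $V_1$, and join $h_0$ to about $\vartheta m$ of them. The total is $q-O(j^2)$ internal edges, reduced by a further $O(1)$ edges (or by passing to a $B^-$-type base) so that the strict inequality holds.
\item Because the hubs are pairwise nonadjacent, the internal graph is bipartite and triangle-free. Hence every $K_{r+1}$ uses exactly one internal edge, and the count is at most $e(H)\,c_r(n)<q\,c_r(n)$. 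The deletions change $\gamma$ only by $O(1/m)$, so they cost $o(1)$.
\item $N_{K_{r+1}}\le C_0n^r$ and $\lam\ge\al n$, so Lemma~\ref{lem:linear-signed} applies. Here $z=0$, $S_2=O(1)$ and $p=\gamma m+O(1)$.
\item In $\sum_v s_v^2$, the hubs contribute $(j+\vartheta^2)m^2+O(n)$. Each leaf has internal degree at most $j+1$, so the leaves contribute only $O(n)$.
\item Formula~\eqref{eq:signed-expansion} then gives $\lam=\al n+2\gamma/r+(j+\vartheta^2)/(r(r-1))+o(1)=\al n+\Phi_r(\gamma)+o(1)$, using $m/n\to1/r$ and $1/(\al r^2)=1/(r(r-1))$.
\end{itemize}

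\emph{Upper bound.} We may assume $\lam(G)\ge\al n-O(1)$, since otherwise the claim is trivial. Lemma~\ref{lem:linear-signed} then gives $p+z=O(n)$ and $S_2=O(n)$ for every maximum cut, so every part has size $m+O(\sqrt n)$. I repeat the proof of Theorem~\ref{thm:profiles} verbatim.
\begin{itemize}
\item Take the limit neighborhood ratios $x_{j,i}$ and define $t_j$, $\pi_j$, $P$, $Z$ as there.
\item The spectral formula~\eqref{eq:limit-spectrum-short} and the budget $P+\sum_j\pi_j\le\gamma$ carry over unchanged, since neither used $q\le m$.
\item Inequality~\eqref{eq:product-bound} gives $\pi_j\ge(t_j)_+$. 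Hence $\Lambda\le\frac{2(P+T)}r+\frac{1}{r(r-1)}\sum_j(t_j)_+^2$ with $P+T\le\gamma$.
\end{itemize}
The new input is the cap $(t_j)_+\le a_j=x_{j,i_j}\le1$. It holds because a vertex has at most $|V_{i_j}|-1=m+o(m)$ internal neighbors, and it is where the near-balance $S_2=O(n)$ is needed.

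The remaining step is the optimization: maximize $\sum_j y_j^2$ subject to $0\le y_j\le1$ and $\sum_j y_j\le\gamma-P$. This is a convex function over a polytope, so the maximum sits at a vertex: $\lfloor\gamma-P\rfloor$ coordinates equal to $1$ and one equal to the fractional part. Its value is therefore $\lfloor\gamma'\rfloor+\{\gamma'\}^2$ with $\gamma'=\gamma-P$. The function $\Phi_r$ is nondecreasing, so putting $P=0$ is optimal and $\Lambda\le\Phi_r(\gamma)$. Continuity of $\Phi_r$ at integers (since $\vartheta^2\to1$ as $\vartheta\to1$) handles subsequences approaching integer values of $\gamma$.

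I expect the main obstacle to be justifying the cap $t_j\le1$ uniformly, and checking that the truncation estimate $\frac1{m^2}\sum_{j>K}s_{v_j}^2\le C_r/(K+1)$ and the disjoint-completion counting still hold when $p$ is up to $\Gamma m$ rather than at most $m$. Both need only $p+z=O(n)$, so I expect the earlier arguments to go through with constants depending on $\Gamma$.
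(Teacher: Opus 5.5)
Your proof is correct, and the upper bound is the paper's argument. Both rerun Theorem~\ref{thm:profiles} for a bounded limit $\gamma$, use the cap $t_j\le1$ coming from $S_2=O(n)$, and bound $\sum_j(t_j)_+^2$ by $\lfloor T\rfloor+(T-\lfloor T\rfloor)^2$. The paper does this by moving mass between fractional entries; your vertex-of-a-polytope argument, applied to finite truncations, is equivalent. One wording point: the monotonicity you actually need is that of $c\mapsto\lfloor c\rfloor+\{c\}^2$, with the linear term controlled by $P+T\le\gamma$. Monotonicity of $\Phi_r$ alone does not give it.

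The lower bound is where you take a different route. The paper adds $j$ mutually adjacent universal vertices and one partial vertex to a balanced core on $n-j-1$ vertices. Its internal graph therefore contains triangles, the clique count is only $qc_r(n)+O(n^{r-1})$, and a bounded number of edge deletions is needed to make the inequality strict. The paper then reads off the spectral radius from \eqref{eq:fine}, with error $O(n^{-1})$.

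Your hubs are pairwise nonadjacent inside a largest Tur\'an part. So the internal graph is bipartite, every $K_{r+1}$ uses exactly one internal edge, and the count is exactly $e(H)c_r(n)$. Strictness is then immediate, which makes this the cleaner construction.

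On the spectral side, \eqref{eq:signed-expansion} gives only $o(1)$, which suffices here. However, Lemma~\ref{lem:linear-signed} is stated for a maximum $r$-cut, and you have not checked that the Tur\'an partition of your graph is one. You can fix this in either of two ways:
\begin{itemize}
\item observe that the expansion part of that proof uses only $p+z=O(n)$ and $S_2=O(n)$ for the chosen partition; or
\item apply \eqref{eq:fine} directly, since all $a_i=O(1)$ and $p=O(n)$; this also gives the sharper $O(n^{-1})$ error.
\end{itemize}
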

\begin{proof}
The preceding argument remains valid whenever $q/m$ has a bounded limit $\gamma$. Since each positive $t_j\le1$, $\sum_j(t_j)_+^2\le\lfloor T\rfloor+(T-\lfloor T\rfloor)^2$.
For a finite sequence, if two entries lie strictly between zero and one, moving mass from the smaller entry to the larger one until one entry becomes zero or one does not decrease the sum of squares. Iteration proves the displayed inequality; the infinite case follows by truncation. The right side is increasing in $T$. Together with $P+T\le\gamma$, this gives the upper bound.

For the lower bound, the case $q<m$ is given by a $(q-1)$-edge star in a largest Tur\'an part. If $q=jm+s_0\ge m$, where $0\le s_0<m$, take a balanced complete $r$-partite core on $n-j-1$ vertices. Add $j$ vertices adjacent to every other vertex and one further vertex $v$, joined to every core vertex outside the first part and to $\min\{s_0,b_1\}$ vertices in that part; assign all added vertices to the first part. Then $p=q+O_{r,\Gamma}(1)$ and $\sum_ud_H(u)^2=jm^2+s_0^2+O_{r,\Gamma}(n)$, so \eqref{eq:fine} gives spectral radius
$\al n+\Phi_r(q/m)+O_{r,\Gamma}(n^{-1})$. Its clique count is
$qc_r(n)+O_{r,\Gamma}(n^{r-1})$. Deleting a sufficiently large bounded number of edges from one universal vertex to the first core part makes the inequality strict; each deletion destroys $\Theta_r(n^{r-1})$ cliques and changes the root by $O(n^{-1})$. Finally, if the error were not uniform, a counterexample sequence would have a subsequence for which $q/m$ converges in $[0,\Gamma]$. This would contradict the upper bound already proved. Hence the error in \eqref{eq:general-envelope} is uniform.
\end{proof}

\subsection{Response equations and stars}
For an internal graph $J$ on $u$ vertices, including isolated vertices, define $R_J(x)=\one^{\mathsf T}(xI-A(J))^{-1}\one$ for $x>\lam(J)$. Since
$R_J(x)=u/x+\sum_{\ell\ge1}w_\ell(J)/x^{\ell+1}$, The exact multipartite walk-series identity of Zhang \cite{ZhangWalks}, written in resolvent form, states that the spectral radius is the unique root of
\begin{equation}\label{eq:response}
 F(x)=\sum_{i=1}^r\frac{R_{H_i}(x)}{1+R_{H_i}(x)}=1.
\end{equation}
The uniqueness also follows from $R_J'(x)=-\one^{\mathsf T}(xI-A(J))^{-2}\one<0$. For a $p$-edge star in a part of size $u$, direct evaluation gives
\begin{equation}\label{eq:star-response}
 R_{\rm st}(x;u,p)=\frac ux+
 \frac{p(2x+p+1)}{x(x^2-p)}.
\end{equation}

For a fixed number of edges, Ismailescu and Stefanica \cite[Lemma~2.4]{IsmailescuStefanica} determined the graphs maximizing $w_2=\sum_v d(v)^2$ in the sparse range: the star is unique except when $p=3$, where the star and the triangle tie. Zhang \cite{ZhangWalks} observed that the third walk count selects the triangle when $p=3$ and the star otherwise. We need the following quantitative form away from the extremal graph.

\begin{lemma}\label{lem:walks}
Let $J$ have $p$ edges and let $w_j=\one^{\mathsf T}A(J)^j\one$. Then $w_1=2p,\qquad w_2=p^2+p-2M_2(J),\qquad w_3\le4p^2$,
and $w_4\le2pw_2$. If $p\ge4$ and $J$ is not a star together with isolated vertices, then $M_2(J)\ge p-3$. If $p\ge16$ and $k=p-\Delta(J)$, then $pk/16\le M_2(J)\le pk$.
\end{lemma}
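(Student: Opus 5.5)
The plan is to express every quantity through degrees and ordered walks, and to organise the $M_2$ estimates around a vertex $v$ of maximum degree $\Delta=\Delta(J)$. Throughout, $k=p-\Delta$ is the number of edges of $J$ not incident with $v$.

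\emph{The walk identities.} First, $w_1=\sum_u d(u)=2p$. Next, $w_2=\|A\one\|^2=\sum_u d(u)^2$. Count ordered pairs (edge, endpoint, edge at that endpoint): this gives $\sum_u d(u)^2=2p+2a$, where $a$ is the number of unordered pairs of distinct edges sharing a vertex. Since $a+M_2(J)=\binom p2$, we get
\[
 w_2=2p+p(p-1)-2M_2=p^2+p-2M_2(J).
\]
For $w_3$, a walk $v_0v_1v_2v_3$ is determined by the two oriented edges $(v_0,v_1)$ and $(v_2,v_3)$; the middle step $v_1v_2$ imposes a constraint but adds no choice. There are $(2p)^2$ ordered pairs of oriented edges, so $w_3\le4p^2$. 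For $w_4$, write $w_4=(A\one)^{\mathsf T}A^2(A\one)\le\lam(J)^2w_2$, and use $\lam(J)^2\le\operatorname{tr}A(J)^2=2p$. This gives $w_4\le2pw_2$.

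\emph{The upper bound $M_2\le pk$.} Two edges at $v$ always intersect, so every disjoint pair contains at least one of the $k$ edges avoiding $v$. Hence $M_2\le kp$.

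\emph{The lower bounds.} The basic observation is that a non-hub edge $xy$ meets at most two edges at $v$, namely $vx$ and $vy$. It is therefore disjoint from at least $\Delta-2$ hub edges, so
\[
 M_2\ge k(\Delta-2).
\]

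\emph{The bound $M_2\ge p-3$.} Assume $p\ge4$ and that $J$ is not a star plus isolated vertices, so $k\ge1$.
\begin{itemize}
\item If $\Delta\ge3$, then $k(\Delta-2)-(p-3)=k\Delta-2k-\Delta-k+3=(k-1)(\Delta-3)\ge0$, which is the claim.
\item If $\Delta\le2$, every edge meets at most two others. Then $M_2\ge\binom p2-p=p(p-3)/2\ge p-3$.
\end{itemize}

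\emph{The bound $M_2\ge pk/16$ for $p\ge16$.}
\begin{itemize}
\item If $\Delta\ge p/8$, then $\Delta-2\ge p/16$ because $p\ge16$. The basic observation gives $M_2\ge k(\Delta-2)\ge kp/16$.
\item If $\Delta<p/8$, each edge meets at most $2\Delta-2$ others. Hence $M_2\ge p(p-2\Delta+1)/2\ge 3p^2/8\ge pk/16$, using $k\le p$.
\end{itemize}

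The only delicate step is the case split in the $p-3$ bound, namely checking that the algebraic identity $(k-1)(\Delta-3)\ge0$ covers every configuration with $\Delta\ge3$. The remaining statements are direct double counts.
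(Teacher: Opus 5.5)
The walk identities, the injection of length-three walks into pairs of oriented edges, $w_4\le\lambda(J)^2w_2\le2pw_2$, the bound $M_2\le pk$, and the identity $k(\Delta-2)-(p-3)=(k-1)(\Delta-3)$ are all correct. The identity is the paper's concavity argument on $1\le k\le p-3$ written out explicitly. The gap is in the lower bound $M_2\ge pk/16$.

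In your first case you claim that $\Delta\ge p/8$ implies $\Delta-2\ge p/16$ ``because $p\ge16$''. But $p/8-2\ge p/16$ holds only when $p\ge32$. Integrality does not save it: at $\Delta=\lceil p/8\rceil$ the inequality fails for every $16\le p\le24$. For example, take $p=16$ and $J=C_{16}$. Then $\Delta=2\ge p/8$ puts you in the first case, where $k(\Delta-2)=0$, while the target is $pk/16=14$. So your argument does not prove the lemma in this range. Also, the step you flagged as delicate is not the one that breaks.

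The repair is easy because your second case has plenty of slack. Split at $\Delta\ge3p/16$ (or at $p/4$) instead. Then $\Delta-2\ge3p/16-2\ge p/16$ holds exactly when $p\ge16$. If $\Delta<3p/16$, your degree count gives $M_2\ge p(p-2\Delta+1)/2>5p^2/16\ge pk/16$.

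For comparison, the paper splits on $M_2$ rather than on $\Delta$. If $M_2<p^2/16$, then $w_2=p^2+p-2M_2>7p^2/8$. Combined with $w_2=\sum_u d(u)^2\le2p\Delta$, this forces $\Delta>7p/16$. Hence $\Delta-2\ge p/4$ for $p\ge16$, and $M_2\ge k(\Delta-2)\ge pk/4$. Your corrected version reaches the same bound by a direct degree count, without going through $w_2$.
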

\begin{proof}
Counting intersecting edge pairs gives $w_2=2p+2\left(\binom p2-M_2(J)\right)$.
If $d=A(J)\one$, then
\[
 w_3=2\sum_{ab\in E(J)}d(a)d(b)
 \le\left(\sum_ad(a)\right)^2=4p^2,
 \qquad
 w_4=\|A(J)d\|^2\le2p\|d\|^2,
\]
because $\|A(J)\|\le\sqrt{2p}$.

Choose a vertex of maximum degree $d_0=p-k$. Every edge avoiding it is disjoint from at least $d_0-2$ incident edges. If $d_0\ge3$ and $k\ge1$, then $M_2(J)\ge k(p-k-2)\ge p-3$;
the last inequality follows by concavity on $1\le k\le p-3$. If $d_0\le2$, every edge meets at most two other edges, so
$M_2(J)\ge\binom p2-p\ge p-3$. A pairwise-intersecting family of at least four edges has a common endpoint, which proves the stated description of the star case.

Every disjoint pair contains an edge avoiding the chosen vertex, hence $M_2(J)\le pk$. If $M_2(J)\ge p^2/16$, then $M_2(J)\ge pk/16$ because $k\le p$. Otherwise $w_2=p^2+p-2M_2(J)>\frac78p^2$,
whereas $w_2=\sum_vd(v)^2\le2p\Delta(J)=2pd_0$. Thus $d_0>7p/16$ for $p\ge16$, and $d_0-2\ge p/4$. The first paragraph then gives $M_2(J)\ge k(d_0-2)\ge pk/4$.
This completes the proof.
\end{proof}

For $p\le Bn$ and $x=\Theta(n)$, the walk series gives
\begin{equation}\label{eq:R-expansion}
 R_J(x)=\frac ux+\frac{2p}{x^2}+\frac{w_2}{x^3}
 +O_B(p^2/n^4).
\end{equation}
Indeed, the length-three term is at most $4p^2/x^4$. For $j\ge4$, the spectral theorem and $w_4\le2pw_2$ give
$|w_j|\le\|A(J)\|^{j-4}w_4$, and the remaining tail is $O\left(\frac{p^3}{x^5(1-\sqrt{2p}/x)}\right)=O_B(p^2/n^4)$.
The same bounds justify differentiation on every fixed interval $x=\al n+O(1)$.

Let $K=K_r(n_1,\ldots,n_r)$, put $a_i=n_i-n/r$, $A=\max_i|a_i|$, $S_2=\sum_i a_i^2$, and let $\mu=\lam(K)$. Its base response $F_0(x)=\sum_i n_i/(x+n_i)$ satisfies
\begin{equation}\label{eq:base-variance-identity}
 1-F_0(\al n)=\frac{\al}{n}\sum_i\frac{a_i^2}{n+a_i}.
\end{equation}
Hence $\mu\le\al n$, and whenever $\mu\ge cn$ for a fixed $c>0$,
\begin{equation}\label{eq:variance-loss}
 \al n-\mu\ge c'_rS_2/n,
\end{equation}
because $|F_0'(x)|=O(n^{-1})$ on the relevant interval.

Put $p_i=e(H_i)$. Expanding \eqref{eq:response} in walks gives the following formulas.

\begin{lemma}\label{lem:response-expansions}
If $\max_i|a_i|=O(1)$ and $p=O(n)$, then
\begin{equation}\label{eq:fine}
 \lam(K+H)=\lam(K)+\frac{2p}{n}-\frac4{n^2}\sum_i a_ip_i
 +\frac{\sum_vd_H(v)^2}{\al n^2}
 +O\left(\frac{p^2+p}{n^3}\right).
\end{equation}
If $p=O(n)$, $S_2=O(n)$ and $A=\max_i|a_i|$, then
\begin{equation}\label{eq:coarse}
 \lam(K+H)=\al n+f_r(p/n)-\frac{\al S_2}{n}
 -\frac{2M_2(H)}{\al n^2}+O\left(\frac{A+1}{n}\right).
\end{equation}
\end{lemma}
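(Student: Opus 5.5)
The plan is to expand the response equation \eqref{eq:response} around $x=\mu=\lam(K)$ and then solve for the root perturbatively. Write $R_i=R_{H_i}(x)$. By \eqref{eq:R-expansion},
\[
 R_i=\frac{n_i}{x}+\delta_i,\qquad \delta_i=\frac{2p_i}{x^2}+\frac{w_2(H_i)}{x^3}+O(p_i^2/n^4).
\]
The unperturbed term $n_i/x$ gives $n_i/(x+n_i)$ in $F$. The first-order variation is
\[
 \frac{R_i}{1+R_i}-\frac{n_i}{x+n_i}=\frac{\delta_i}{(1+n_i/x)^2}+O\bigl(\delta_i^2\bigr).
\]
So $F(x)=F_0(x)+\sum_i x^2\delta_i/(x+n_i)^2+O(\sum_i\delta_i^2)$. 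Since $\delta_i=O(p_i/n^2+w_2/n^3)$ and $w_2\le 2p\Delta\le 2p^2$, the quadratic error is $O((p^2+p)^2/n^4)$, which is $O(p^2/n^4)$ for $p=O(n)$. It will be absorbed after multiplying by $|F'|^{-1}=\Theta(n)$.

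\emph{Fine formula \eqref{eq:fine}.} Set $\lambda=\mu+\Delta x$ and linearize $F_0(\mu+\Delta x)=1-\Delta x\,|F_0'(\mu)|+O(\Delta x^2/n^2)$. With $|a_i|=O(1)$ we have $\mu=\al n+O(1/n)$ and $x+n_i=n+a_i+O(1/n)$. Also
\[
 |F_0'(\mu)|=\sum_i n_i/(\mu+n_i)^2=\frac{1}{\al n}\bigl(1+O(n^{-2})\bigr)\cdot\frac{r\cdot(1/r)\al}{\al}\ \text{(to be computed exactly)}.
\]
Concretely, $\sum_i (n/r)/(n)^2\cdot$ corrections give $|F_0'(\mu)|=1/(rn\cdot\al^{-1}\dots)$, and I will compute it to relative accuracy $O(n^{-2})$ rather than trusting a guessed constant. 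The first-order equation is then
\[
 \Delta x\,|F_0'(\mu)|=\sum_i\frac{\mu^2}{(\mu+n_i)^2}\Bigl(\frac{2p_i}{\mu^2}+\frac{w_2(H_i)}{\mu^3}\Bigr).
\]
Expanding $(\mu+n_i)^{-2}=n^{-2}(1-2a_i/n+O(n^{-2}))$ yields two main terms. The first is $2p/n^2$ with the correction $-4\sum_ia_ip_i/n^3$. The second is $\sum_v d_H(v)^2/(\al n^3)$, since $w_2=\sum_v d(v)^2$. Dividing by $|F_0'|\approx 1/n$ (I will check that the constant is exactly $1/n$ up to $O(n^{-2})$, because $\sum_i n_i\mu^2/(\mu+n_i)^2\cdot\mu^{-2}$ simplifies using $\sum n_i/(\mu+n_i)=1$) gives the stated terms. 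The self-consistency terms, namely the $O(\Delta x^2/n^2)$ term and the replacement of $x$ by $\mu$ inside $\delta_i$, contribute $O(p^2/n^3)$. The cubic walk term and the tail are controlled by \eqref{eq:R-expansion} and contribute $O(p^2/n^3)$.

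\emph{Coarse formula \eqref{eq:coarse}.} Here I expand around $\al n$ instead of $\mu$, so that the base contribution appears through \eqref{eq:base-variance-identity}:
\[
 F_0(\al n)=1-\frac{\al S_2}{n^2}\bigl(1+O(A/n)\bigr).
\]
Now I keep only the terms of order $1/n$ in $\lambda$. I substitute $w_2=p^2+p-2M_2(H)$ from Lemma~\ref{lem:walks}, summed over the parts; disjoint pairs lying in different parts are also counted in $M_2(H)$, which is consistent because $\sum_i p_i^2=p^2-2(\text{cross-part pairs})$. Because $p^2/n^3$ is of order $1/n$, the shift $\Delta x=\lambda-\al n$ is $O(1)$, not small. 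I therefore cannot linearize naively in $p$. Instead I solve a quadratic: the $2p/x^2$ term evaluated at $x=\al n+\Delta x$ produces the correction $-4p\Delta x/n^3$, and combined with $\Delta x/n$ on the left this gives
\[
 \Delta x\Bigl(1-\frac{2p}{\al n^2}\cdot(\ldots)\Bigr)=\frac{2p}{n}+\frac{p^2}{\al n^2}+\dots
\]
The solution is $2c+c^2/\al$ with $c=p/n$, which is exactly $f_r(p/n)$. The terms $p/n^2$, the $a_i$-dependence of the weights, and the $O(p^2/n^4)$ tails are all $O((A+1)/n)$.

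\emph{Main obstacle.} The hard part is this bookkeeping in the coarse regime. I must verify that the cross term between $\Delta x=\Theta(1)$ and $2p/x^2$ produces exactly the $c^2/\al$ coefficient, rather than a different one, and that the $a_i$-weighting of $p_i$ costs only $O(A/n)$. I will organize this by writing $F(\al n+t)-1=0$ as an explicit function of $t$, multiplying by $n$, and expanding to order $1/n$ uniformly for $t=O(1)$.
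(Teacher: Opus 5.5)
Your route is the paper's. For \eqref{eq:fine} you expand the response equation \eqref{eq:response} with \eqref{eq:R-expansion} and linearize at $\mu$, using $D_*=\sum_i n_i/(\mu+n_i)^2=n^{-1}(1+O(n^{-2}))$ and $(\mu+n_i)^{-2}=n^{-2}(1-2a_i/n+O(n^{-2}))$. For \eqref{eq:coarse} you expand at $\al n$ with \eqref{eq:base-variance-identity} and $\sum_v d_H(v)^2=p^2+p-2M_2(H)$; your treatment of cross-part pairs in $M_2$ is correct. The fine-formula bookkeeping is right, with one slip: the quadratic remainder is $O(\max_i\delta_i^2)=O(p^2/n^4)$, not $O((p^2+p)^2/n^4)$.

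For the coarse formula, the ``main obstacle'' you describe does not exist, and the mechanism you give is wrong.

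\begin{itemize}
\item At $x=\al n+t$ the $t$-dependence of $2p_i/(x+n_i)^2$ is $-4p_it/n^3+\cdots$. After multiplying the equation by $n$ this is $-4(p/n)t/n=O(1/n)$ for $t=O(1)$, so it is absorbed into $O((A+1)/n)$.
\item Hence the coefficient of $t$ in $n(F(\al n+t)-1)$ is $-1+O(n^{-1})$. Since $F_0''=O(n^{-2})$, the quadratic remainder in $t$ is also negligible. A plain linear solve suffices; there is no quadratic to solve.
\item The $c^2/\al$ term comes entirely from the length-two walk term $w_2/x^3$, through $p^2/(\al n^3)$ in $F$. That term already sits on the right side of your display. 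The cross term does not produce it, and trying to extract $c^2/\al$ from there would either find nothing or double count.
\item Your display yields $f_r(c)$ only because its left factor is $1+O(1/n)$.
\end{itemize}

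What is genuinely missing is the a priori localization $\lam(K+H)=\al n+O(1)$. You assert it from the size of the perturbation, but it needs an argument. The paper shows $F(\al n+w)=1-w/n+O(n^{-1})$ uniformly for $|w|\le n^{1/4}$. It then uses that $F$ is strictly decreasing, because $R_J'<0$, to get $F(\al n-C_1)>1>F(\al n+C_1)$ for a fixed $C_1$. The same monotonicity argument around $\mu$ gives $\lam(K+H)-\mu=O(p/n)$. You need that bound before your $O(\Delta x^2/n^2)$ remainder in the fine formula is justified.
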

\begin{proof}
We first prove the coarse formula. By the hypotheses, choose fixed constants $B_0,C_0$ such that $p\le B_0n$ and $S_2\le C_0n$. A direct use of \eqref{eq:R-expansion} and \eqref{eq:base-variance-identity}, evaluated at $x=\al n+w$, gives, uniformly for $|w|\le n^{1/4}$, $F(\al n+w)=1-\frac{w}{n}+O_{r,B_0,C_0}(n^{-1})$.
Let $C$ bound the last error and choose a fixed $C_1>2C$. Since $F$ is strictly decreasing, $F(\al n-C_1)>1>F(\al n+C_1)$
for all sufficiently large $n$. Thus the root has the form $x=\al n+w$ with $w=O(1)$. Expanding the base response at $\al n$ and using \eqref{eq:base-variance-identity} now gives
\[
 F_0(x)=1-\frac wn-\frac{\al S_2}{n^2}
 +O\left(\frac{(A+1)S_2}{n^3}+\frac1{n^2}\right).
\]
Let $S=\sum_vd_H(v)^2$. By \eqref{eq:R-expansion}, the change in the $i$th part response is obtained by dividing the increment of $R_{H_i}$ by $(1+n_i/x)^2$, up to a quadratic error. Summing over the parts yields
\[
 F(x)-F_0(x)=\frac{2p}{n^2}+\frac{S}{\al n^3}
 +O\left(\frac{A+1}{n^2}\right).
\]
Here the quadratic terms in the changes in the response are $O(n^{-2})$, and changing a denominator by $O(A+1)$ produces the displayed error. Setting $F(x)=1$, multiplying by $n$, and using
$S=p^2+p-2M_2(H)$ gives
\[
 w=f_r(p/n)-\frac{\al S_2}{n}
 -\frac{2M_2(H)}{\al n^2}
 +O\left(\frac{A+1}{n}\right),
\]
where the term $p/(\al n^2)$ is absorbed by the error. This is \eqref{eq:coarse}.

Assume now that $A=O(1)$. Let $D_*=\sum_i\frac{n_i}{(\mu+n_i)^2}$.
The root change is $O(p/n)$. Taylor expansion of the response equation, using \eqref{eq:R-expansion} and its differentiated form, gives
\[
 \lam(K+H)-\mu
 =\sum_i\frac{2p_i}{D_*(\mu+n_i)^2}
 +\frac1\mu\sum_i
 \frac{\sum_{v\in V_i}d_H(v)^2}{D_*(\mu+n_i)^2}
 +O\left(\frac{p^2+p}{n^3}\right).
\]
The base equation and the normalization condition imply
\[
 \mu=\al n-\frac{\al S_2}{n}+O(n^{-2}),
 \qquad
 \frac1{D_*(\mu+n_i)^2}
 =\frac1n-\frac{2a_i}{n^2}+O(n^{-3}).
\]
Putting these formulas together proves \eqref{eq:fine}; in the degree-square term the $a_i/n^2$ correction is $O((p^2+p)/n^3)$ and is absorbed.
\end{proof}

\begin{lemma}\label{lem:walk-defects}
There is an absolute constant $C$ such that every $p$-edge graph $J$, with $p\ge4$, satisfies
\begin{equation}\label{eq:walk-defects}
 |w_3(J)-2p^2|\le CM_2(J),\qquad
 |w_4(J)-(p^3+p^2)|\le CpM_2(J).
\end{equation}
\end{lemma}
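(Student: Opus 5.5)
Both estimates should come from writing $w_3$ and $w_4$ as sums over degrees and neighborhoods, comparing each with its value on a $p$-edge star, and charging every defect to pairs of disjoint edges. For a star one has $w_3=2p^2$ exactly and $w_4=p^3+p^2$. We use two consequences of Lemma~\ref{lem:walks}. When $J$ is not a star, $M_2(J)\ge p-3\ge p/4$ for $p\ge4$. When $J$ is a star, both sides of \eqref{eq:walk-defects} vanish. So it suffices to bound the defects by $O(M_2)$ and $O(pM_2)$ with absolute constants.

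\textbf{The third walk count.} Write $d=A(J)\one$, so that
\[
 w_3=2\sum_{ab\in E(J)}d(a)d(b)=\Bigl(\sum_a d(a)\Bigr)^2-\sum_a d(a)^2+2\sum_{ab\in E}d(a)d(b)-\sum_{a\ne b}d(a)d(b)
\]
is not the cleanest route. Instead count ordered pairs of edges $(e,f)$ with a third-edge structure: $w_3$ counts walks $x_0x_1x_2x_3$, that is, ordered triples of edges $(e_1,e_2,e_3)$ with $e_1\cap e_2\ne\emptyset$ and $e_2\cap e_3\ne\emptyset$, suitably oriented. Concretely,
\[
 w_3=2\sum_{uv\in E}d(u)d(v),\qquad \sum_{uv\in E}d(u)d(v)=\sum_{uv\in E}\bigl(\#\{f\ni u\}\bigr)\bigl(\#\{g\ni v\}\bigr).
\]
Take a vertex $c$ of maximum degree $\Delta=p-k$. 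Edges $uv$ with $u=c$ contribute $\Delta\, d(v)$, and $\sum_{v\in N(c)}d(v)=\Delta+O(\text{edges avoiding }c\text{ but meeting }N(c))$. This makes their total $\Delta^2+O(\Delta k)$. Edges avoiding $c$ contribute at most $\sum d(u)d(v)\le k\cdot O(\Delta+k)$.

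Hence $|w_3-2p^2|=O(pk)+O(p)$. By Lemma~\ref{lem:walks}, $pk\le16M_2$ when $p\ge16$. The case $4\le p<16$ is covered trivially: there $|w_3-2p^2|\le 6p^2=O(1)\le O(M_2)$ in the non-star case, since $M_2\ge1$. The additive $O(p)$ term is absorbed because $M_2\ge p/4$ whenever $J$ is not a star.

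\textbf{The fourth walk count.} Use $w_4=\|d\|_{A}^2=\sum_v\bigl(\sum_{u\sim v}d(u)\bigr)^2$. For the star, the hub term is $p^2$ squared, giving $p^2\cdot p$ after summation, and the leaves give $p\cdot p^2$... wait: the hub term is $(\sum_{\text{leaves}}1)^2=p^2$ and each leaf term is $p^2$, so the total is $p^2+p\cdot p^2$, matching $p^3+p^2$. For general $J$ with hub $c$ and $k=p-\Delta$, perturb as follows.
\begin{itemize}
\item[] Each neighbor $v$ of $c$ has $\sum_{u\sim v}d(u)=\Delta+O(d(v)\Delta_{\mathrm{loc}})$. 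Its square differs from $\Delta^2$ by $O(\Delta\cdot(\text{extra}))$, and summing the extras over $v$ is $O(\sum_v d(v)^2$ over non-hub vertices$)$, which is $O(k(\Delta+k))$.
\item[] The hub term $(\sum_{v\sim c}d(v))^2$ differs from $\Delta^2$ by $O(\Delta k)$.
\item[] The remaining vertices, which are not adjacent to $c$, contribute $O(k\cdot p^2)$ using $\sum_{u\sim v}d(u)\le 2p$ only in the sense of the bound $\sum_v(\cdot)^2\le\|A\|^2\|d\|^2$ restricted to edges avoiding $c$.
\end{itemize}
Altogether $|w_4-(p^3+p^2)|=O(p^2k+p)$, which is $O(pM_2)$ by Lemma~\ref{lem:walks}. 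As before, the small-$p$ cases and the additive terms are absorbed using $M_2\ge p/4$ in the non-star case.

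\textbf{Main obstacle.} The main difficulty is the non-hub contributions in $w_4$. We must verify that every term involving an edge avoiding $c$ is $O(p^2)$ per such edge and not larger. Each such edge $e$ can participate in walks of length four only through the degrees of its endpoints and their neighbors, and each of these is at most $p$. So the number of closed and open four-walks using $e$ is $O(p^2)$, which should give the $O(p^2k)$ bound directly by counting walks edge by edge.
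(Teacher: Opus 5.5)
Your proposal is correct and is essentially the paper's argument. A maximum-degree hub $c$ splits $w_3/2=\sum_{uv\in E}d(u)d(v)$ into hub edges and the $k=p-\Delta$ edges avoiding $c$, and splits $w_4=\|A(J)d\|^2$ over the hub, its neighbours and the remaining vertices; this gives defects $O(pk)$ and $O(p^2k)$, which Lemma~\ref{lem:walks} converts into $O(M_2)$ and $O(pM_2)$. You quote $pk\le16M_2$ directly, whereas the paper splits at $M_2\ge p^2/16$.

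One estimate is asserted without proof: that the edges avoiding $c$ contribute $O(pk)$ to $w_3/2$. This has to be done in aggregate, because the per-edge bound $d(u)d(v)\le(k+1)^2$ is too weak when $k\gg\sqrt p$. For instance, let $V_0$ be the set of endpoints of these edges; then $d(u)\le 1+d_{J-c}(u)$ on $V_0$, so their contribution is at most $\frac12\bigl(\sum_{u\in V_0}d(u)\bigr)^2\le 8k^2\le 8pk$.
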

\begin{proof}
Write $M=M_2(J)$. The assertion is clear for a star. The absolute bounds in Lemma~\ref{lem:walks} prove it when $M\ge p^2/16$, and also for $4\le p<16$ after increasing $C$. Suppose $p\ge16$ and $0<M<p^2/16$. Choose a vertex $v$ of degree $d=p-k$. The proof of Lemma~\ref{lem:walks} gives $pk\le4M$.

Delete the edges at $v$, leaving a $k$-edge graph $J_0$. For $z\ne v$, let $e_z$ be the indicator of $N_J(v)$ and put $b_z=d_{J_0}(z)$. If $B_0=\sum_{z\in N_J(v)}b_z\le2k$, then $\frac{w_3(J)}2=d(d+B_0)+ \sum_{xy\in E(J_0)}(e_x+b_x)(e_y+b_y)$.
The right side lies between $d^2$ and $p^2+2k^2+2k$, so
$|w_3(J)-2p^2|\le8pk=O(M)$.

Put $t_z=\sum_{y\in N_{J_0}(z)}(e_y+b_y)$. From $w_4=\|A(J)d_J\|^2$, $w_4(J)=(d+B_0)^2+d^3 +2d\sum_{z\in N_J(v)}t_z+\sum_z t_z^2$.
Here $\sum_zt_z\le k^2+3k$ and $\sum_z t_z^2\le2k\sum_z(e_z+b_z)^2 \le2k(p+k^2+5k)$.
Comparison with $p^3+p^2$ gives an error $O(p^2k)=O(pM)$, proving \eqref{eq:walk-defects}.
\end{proof}

\begin{lemma}\label{lem:relative-response}
Fix $B>0$. Uniformly for $p\ge4$, $u\ge p+1$, and $p\le Bx$, as $x\to\infty$,
\begin{equation}\label{eq:relative-response}
 R_{\rm st}(x;u,p)-R_J(x)
 =\frac{2M_2(J)}{x^3}\left(1+O_B(x^{-1/2})\right).
\end{equation}
\end{lemma}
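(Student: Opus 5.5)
The plan is to compare the two walk series term by term. Both graphs live on the same $u$ vertices, and $u\ge p+1$ guarantees that the $p$-edge star fits. We have $\|A(J)\|\le\sqrt{2p}$ and $\|A({\rm st})\|=\sqrt p$, and $p\le Bx$ gives $\sqrt{2p}/x=O_B(x^{-1/2})\to0$. So for large $x$ both resolvents expand as
\[
 R(x)=\frac ux+\sum_{\ell\ge1}\frac{w_\ell}{x^{\ell+1}} .
\]
Subtracting the two expansions, the $u/x$ terms cancel. The $\ell=1$ terms cancel because $w_1=2p$ for both. By Lemma~\ref{lem:walks}, $w_2({\rm st})-w_2(J)=(p^2+p)-(p^2+p-2M_2(J))=2M_2(J)$, which produces the main term $2M_2(J)/x^3$. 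If $J$ is a star together with isolated vertices, then $J$ is isomorphic to the star graph on $u$ vertices, both sides vanish, and there is nothing to prove. Hence we may assume $J$ is not a star. Lemma~\ref{lem:walks} with $p\ge4$ then gives $M_2(J)\ge p-3\ge p/4$ and $M_2(J)\ge1$. It remains to show that the contribution of $\ell\ge3$ is $O_B(M_2(J)x^{-7/2})$.

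For $\ell=3,4$ we use exact star values. Writing $d$ for the degree vector of the star, $w_3({\rm st})=2\sum_{ab}d(a)d(b)=2p^2$ and $w_4({\rm st})=\|A d\|^2=p^2+p\cdot p^2=p^3+p^2$. Lemma~\ref{lem:walk-defects} then bounds the two differences by $CM_2(J)$ and $CpM_2(J)$ respectively. Dividing by $x^4$ and $x^5$ and using $p\le Bx$, the $\ell=3,4$ terms are $O_B(M_2(J)/x^4)$.

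For the tail $\ell\ge5$ we do not need cancellation; we bound each graph separately. For $G\in\{J,{\rm st}\}$, the spectral theorem gives $|w_\ell(G)|\le\|A(G)\|^{\ell-4}w_4(G)\le(2p)^{(\ell-4)/2}\cdot 2p\,w_2(G)$, using $w_4\le2pw_2$ and $w_2\le p^2+p$. Summing the geometric series with ratio $\sqrt{2p}/x\le1/2$ bounds each tail by
\[
 O\bigl(p^{3}\sqrt p\,x^{-6}\bigr)=O_B\bigl(p\,x^{-7/2}\bigr)\cdot\frac{p^{5/2}}{x^{5/2}}=O_B\bigl(p\,x^{-7/2}\bigr).
\]
Since $p\le4M_2(J)$, this is $O_B(M_2(J)x^{-7/2})$. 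Combining the three ranges gives the stated relative error $O_B(x^{-1/2})$ uniformly.

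The only delicate point is the tail. A crude bound there is not relative to $M_2(J)$, so one must use the lower bound $M_2(J)\ge p-3$ for non-stars, valid because $p\ge4$. This is exactly why the hypothesis $p\ge4$ is needed: at $p=3$ the triangle has $M_2=0$ but a different $w_3$.
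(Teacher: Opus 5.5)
Your proposal is correct and follows the paper's proof essentially step for step. Both arguments cancel the terms through length one and take the length-two difference $2M_2(J)/x^3$ as the main term. Both use Lemma~\ref{lem:walk-defects} for lengths three and four, bound the tail from length five on separately for each graph by $O(p^{7/2}x^{-6})$ via the spectral theorem, and then divide by $M_2(J)\ge p-3\ge p/4$ for non-stars. Your direct check that $p-3\ge p/4$ holds for every $p\ge4$ makes the paper's remark about enlarging the constant for bounded $p$ unnecessary.
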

\begin{proof}
Expand both responses in walks. Their terms of lengths zero and one agree, while the length-two difference is $2M_2(J)/x^3$. Lemma~\ref{lem:walk-defects} bounds the next two differences by
$CM_2(J)/x^4$ and $CpM_2(J)/x^5$. For either graph, the tail starting at length five is at most $\frac{Cp^{7/2}}{x^6(1-\sqrt{2p}/x)}$,
by the spectral theorem and $w_4\le2p(p^2+p)$. Outside the star class, Lemma~\ref{lem:walks} gives $M_2(J)\ge p-3\ge p/4$ after enlarging the constant for bounded $p$. Dividing the errors by $M_2(J)/x^3$ gives
\[
 O\left(x^{-1}+p/x^2+
 \frac{p^{5/2}}{x^3(1-\sqrt{2p}/x)}\right)=O_B(x^{-1/2}).
\]
For a star both sides vanish.
\end{proof}

For fixed $p$, the qualitative star--triangle alternative in complete multipartite extensions follows from \cite{FangTaitZhai} and is also recorded by Zhang \cite{ZhangWalks}; the linear-range version inside $\mathcal T_{n,r,p}$ is \cite[Theorem~1.4(ii)]{FLLM}. The next theorem supplies the uniform defect term needed when $p$ may grow linearly with $n$.

\begin{theorem}\label{thm:concentration}
Fix $r\ge3$ and $A,B>0$. Let $|n_i-n/r|\le A$, and let a $p$-edge graph $H$ lie in one part, where
$4\le p\le\min\{Bn,u-1\}$. If $G_{\rm st}$ replaces $H$ by a $p$-edge star in that part, then
\begin{equation}\label{eq:spectral-concentration}
 \lam(G_{\rm st})-\lam(K+H)
 =\frac{2M_2(H)}{\al n^2}\left(1+O_{r,A,B}(n^{-1/2})\right).
\end{equation}
The same conclusion holds when the internal edges lie in several parts of a common size $u$, provided that their total number $p$ satisfies $p\le u-1$ and they are replaced by one $p$-edge star in one of these parts. Here $M_2(H)$ is computed in the disjoint union of the internal graphs.
\end{theorem}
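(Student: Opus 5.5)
The plan is to compare the two spectral radii through the response equation \eqref{eq:response}. The two graphs share the complete multipartite base $K$, and their internal graphs differ only in the part containing $H$ (or in the parts of size $u$ that carry edges). Let $F$ and $F_{\rm st}$ be the two response functions. Write $\lam=\lam(K+H)$ and $\lam_{\rm st}=\lam(G_{\rm st})$. Both roots are $\al n+O(1)$; this follows from \eqref{eq:coarse}, since $p\le Bn$ and $S_2=O(1)$. We use $F(\lam)=1=F_{\rm st}(\lam_{\rm st})$. By the mean value theorem,
\[
 \lam_{\rm st}-\lam=\frac{F_{\rm st}(\lam)-F(\lam)}{-F_{\rm st}'(\xi)}
\]
for some $\xi$ between the two roots. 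The argument therefore has two ingredients: the numerator, evaluated at $x=\lam$, and the derivative of $F_{\rm st}$ on the window $x=\al n+O(1)$.

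For the numerator, let $i$ be the part carrying $H$. Then
\[
 F_{\rm st}(x)-F(x)=\frac{R_{\rm st}(x)-R_H(x)}{(1+R_{\rm st}(x))(1+R_H(x))}.
\]
Lemma~\ref{lem:relative-response} applies with $u\ge p+1$ and $p\le Bx$. It gives numerator $2M_2(H)x^{-3}(1+O(x^{-1/2}))$. By \eqref{eq:R-expansion}, each of $R_{\rm st}$ and $R_H$ equals $u/x+O(1/n)$, with $u/x=(n/r+O(A))/(\al n+O(1))=1/(r-1)+O(n^{-1})$. Hence each factor $1+R$ is $r/(r-1)+O(n^{-1})$, and the numerator of $\lam_{\rm st}-\lam$ equals
\[
 \frac{2M_2(H)}{x^3}\,\al^2\bigl(1+O(n^{-1/2})\bigr),\qquad x=\lam=\al n+O(1).
\]
For the several-parts version we do the same thing part by part. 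The star occupies one part of size $u$ and the others become empty. The difference between $\sum_{i}R_{H_i}/(1+R_{H_i})$ and the star configuration is still governed by walk counts. Lengths zero and one agree in total, because all parts have the same size $u$ and the $1/(1+R)^2$ weights agree to relative order $O(n^{-1})$. The length-two defect is the degree-square defect of the disjoint union, namely $2M_2(H)$. The higher-order terms are controlled by Lemma~\ref{lem:walk-defects} applied to the disjoint union, so the same estimate results.

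For the denominator, differentiate each summand using $x=\al n+O(1)$, $n_i=n/r+O(A)$, and the fact that the internal contributions to $R'$ are $O(p/n^3)=O(n^{-2})$. This gives
\[
 -F_{\rm st}'(x)=\sum_i\frac{n_i/x^2}{(1+n_i/x)^2}\bigl(1+O(n^{-1})\bigr)=\frac{1}{\al n}\bigl(1+O(n^{-1})\bigr),
\]
since each term is $\frac{n/r}{\al^2n^2}\cdot\al^2=\frac1{rn}$; the total is $1/n$, which up to the uniform normalization matches. The constants should be checked against the base case. In fact it is cleanest to calibrate with the star--versus--star computation of \eqref{eq:fine}, which tells us that an increment $\Delta$ in $R_{H_i}$ at order $x^{-3}$ translates into $\Delta\,\al n^2\cdot\frac{1}{\al n^2}$-type shifts. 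The stated coefficient $2M_2/(\al n^2)$ is exactly the $-2M_2(H)/(\al n^2)$ term already appearing in \eqref{eq:coarse}. We therefore only need to confirm that the relative error is $O(n^{-1/2})$ rather than the additive $O((A+1)/n)$ of \eqref{eq:coarse}.

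The main obstacle is exactly this relative precision. When $M_2(H)$ is small, say $M_2=p-3$ with $p$ bounded, the whole difference is $\Theta(n^{-2})$. Any additive error must then be $o(n^{-5/2})$, so we cannot simply subtract two applications of \eqref{eq:coarse}. The mean value formulation avoids this, because every error term is multiplicative in $F_{\rm st}-F$, and the only nonmultiplicative input is Lemma~\ref{lem:relative-response}, which is already relative. One remaining point is that $\xi$ differs from $\lam$ by $O(M_2/n^2)=O(1)$. This changes $F_{\rm st}'$ only by a relative $O(n^{-1})$, so it is harmless. Uniformity in $A$ and $B$ follows because all constants depend only on $r,A,B$.
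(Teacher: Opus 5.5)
Your one-part argument is the paper's: the mean value theorem applied to \eqref{eq:response}, Lemma~\ref{lem:relative-response} for $R_{\rm st}-R_H$, and each factor $1+R$ equal to $1/\al+O(n^{-1})$. There is one slip. On the root window $-F_{\rm st}'=\frac1n\bigl(1+O(n^{-1})\bigr)$, as your own term-by-term sum $\sum_i\frac1{rn}$ shows, not $\frac1{\al n}$ as in your display. With $1/n$ the constant follows directly, $\al^2\cdot\frac{2M_2(H)}{(\al n)^3}\cdot n=\frac{2M_2(H)}{\al n^2}$, so no calibration against \eqref{eq:coarse} is needed. With $1/(\al n)$ you would get the wrong constant $2M_2(H)/n^2$.

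The genuine gap is in the several-parts extension. Put $h=1+u/x$, $\delta_i=R_{H_i}(x)-u/x\ge0$ and $\delta=\sum_i\delta_i$. Let $\delta_*$ be the increment of the single star, placed in part $i_0$, and let $\psi_h(t)=t/[h(h+t)]$ be the change of one part summand. Then $F_{\rm st}-F=\psi_h(\delta_*)-\sum_i\psi_h(\delta_i)$, which is not a difference of walk series of one graph.

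Your justification is that the $1/(1+R)^2$ weights agree to relative order $O(n^{-1})$. Taken at face value, this bounds the mismatch only by $O(n^{-1})$ times the length-one terms of the parts other than $i_0$, that is, by $O((p-p_{i_0})/n^3)$. This can be of the same order as the main term $2\al^2M_2(H)/x^3$. For example, take a $(p-1)$-edge star in one part and a single edge in the part $i_0$ that receives the star; then $M_2(H)=p-1$. So the relative error $O(n^{-1/2})$ is not established.

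The missing observation is that the nonlinearity only produces products of increments from \emph{different} parts. Since $\psi_h(0)=0$ and $|\psi_h''|\le2$,
\[
 \Bigl|\psi_h(\delta)-\sum_i\psi_h(\delta_i)\Bigr|\le2\sum_{i<j}\delta_i\delta_j=O\Bigl(\frac1{n^4}\sum_{i<j}p_ip_j\Bigr)=O\Bigl(\frac{M_2(H)}{n^4}\Bigr),
\]
because two edges in different parts are disjoint, so $\sum_{i<j}p_ip_j\le M_2(H)$. The remaining term $\psi_h(\delta_*)-\psi_h(\delta)$ is handled by Lemma~\ref{lem:relative-response} applied to the disjoint union, as you intended, and the same mean value step finishes the proof. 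Lemma~\ref{lem:walk-defects} cannot replace this estimate, because the products $\delta_i\delta_j$ are not walk counts of the disjoint union.
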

\begin{proof}
The two roots are $\al n+O_{r,A,B}(1)$ by \eqref{eq:fine}. On the interval between them, the derivatives of the responses are $-1/n+O(n^{-2})$. When the edges lie in one part, the exact change in that part summand is $\frac{R_{\rm st}(x)-R_H(x)} {(1+R_{\rm st}(x))(1+R_H(x))}$.
Each denominator factor is $1/\al+O(n^{-1})$. Equation~\eqref{eq:relative-response} and the mean-value theorem therefore give \eqref{eq:spectral-concentration}.

For several parts of a common size $u$, put $h=1+u/x$, $\delta_i=R_{H_i}(x)-u/x$, and $\delta=\sum_i\delta_i$. If $\delta_*$ is the increment for the single star, Lemma~\ref{lem:relative-response} applied to the disjoint union gives $\delta_*-\delta=\frac{2M_2(H)}{x^3} \left(1+O(n^{-1/2})\right)$.
The change in a full part response is $\psi_h(t)=t/[h(h+t)]$. Since $\delta_i=O(p_i/n^2)$ and $\psi_h''$ is bounded,
\[
 \left|\psi_h(\delta)-\sum_i\psi_h(\delta_i)\right|
 \le\frac{C}{n^4}\sum_{i<j}p_ip_j
 \le\frac{CM_2(H)}{n^4}.
\]
This error is smaller than the main term. The same mean-value argument completes the proof.
\end{proof}

\subsection{Two reduction lemmas}
\begin{lemma}\label{lem:preliminary}
Fix $r\ge3$, $B\ge1$ and $L\ge0$. Suppose $1\le q\le m$, $p\le Bn$, $z=o(n)$, $N_{K_{r+1}}(G)\le q c_r(n),\qquad \lam(G)\ge\al n+f_r(q/n)-L/n$.
With $A=\max_i|n_i-n/r|$, one has
\begin{equation}\label{eq:preliminary-short}
 S_2=O(z+1),\quad A=O(\sqrt{z+1}),\quad
 |p-q|=O(z+1),\quad M_2(H)=O(n(z+1)).
\end{equation}
If $q/m\to\gamma>0$, then $p-\Delta(H)=O(z+1)$.
\end{lemma}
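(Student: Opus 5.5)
We compare an upper bound for $\lam(G)$ with a lower bound for $N_{K_{r+1}}(G)$, both taken in the given maximum cut. The aim is to show that every defect term is $O(z+1)$ in units of $1/n$.

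\emph{Step 1: upper bound for $\lam(G)$.} Since $G\subseteq K_r(n_1,\ldots,n_r)+H$, monotonicity of the spectral radius gives $\lam(G)\le\lam(K+H)$. Lemma~\ref{lem:linear-signed} gives $S_2=O(n)$. Together with $p\le Bn$, this lets us apply the coarse formula \eqref{eq:coarse}:
\[
 \al n+f_r(q/n)-\frac Ln\le \al n+f_r(p/n)-\frac{\al S_2}{n}-\frac{2M_2(H)}{\al n^2}+O\Bigl(\frac{A+1}{n}\Bigr).
\]
I have not used the deleted edges $D$ here. An alternative is to keep the $-2z/n$ term from \eqref{eq:signed-expansion}, but its $o(1)$ error is too weak. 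The crude monotone bound is enough for the present purpose.

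\emph{Step 2: lower bound for the clique count.} Lemma~\ref{lem:standard-completions} gives
\[
 qc_r(n)\ge N_{K_{r+1}}(G)\ge\sum_ip_i\prod_{j\ne i}n_j-C_rpzn^{r-2}.
\]
Since $p=O(n)$ and $\prod_{j\ne i}n_j=m^{r-1}(1+O(A/n))$, and $c_r(n)=m^{r-1}(1+O(1/n))$, this yields $p\le q+O(z+1)+O(pA/n)$. Here $pA/n=O(A)$, and this term is the annoying one. Substituting into Step 1 and using the monotonicity of $f_r$ with $f_r'=\Theta(1)$ gives
\[
 \frac{\al S_2}{n}+\frac{2M_2(H)}{\al n^2}+\frac{c(q-p)_+}{n}\le\frac{O(z+1)+O(A)}{n}.
\]
Here $(q-p)_+$ enters because $f_r(p/n)\le f_r(q/n)-c(q-p)_+/n$. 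The conclusion is $S_2=O(z+1+A)$.

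\emph{Step 3: bootstrap, the main obstacle.} We have $A\le\sqrt{S_2}$, so $S_2=O(z+1+\sqrt{S_2})$. This gives $S_2=O(z+1)$ and hence $A=O(\sqrt{z+1})$.

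The remaining issue is the $pA/n$ loss in the clique estimate, which only gives $p-q\le O(z+1+A)$. For this I would use the exact product: $\prod_{j\ne i}n_j\ge m^{r-1}(1-O(A/n))$ is too lossy. Instead I would combine the spectral gain with the cut structure. Note that $\al S_2/n$ already dominates deviations: the first-order part-size effect $\sum a_ip_i$ in \eqref{eq:fine} is of order $pA/n^2$, while $p-q\le O(z+1)+O(A)$ with $A=O(\sqrt{z+1})\le O(z+1)$. Hence $|p-q|=O(z+1)$ in both directions. The lower direction comes from Step 2's spectral inequality.

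Then $M_2(H)=O(n(z+1))$ follows from the same inequality.

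\emph{Step 4: concentration at a hub.} Suppose $q/m\to\gamma>0$. Then $p\ge q-O(z+1)\ge16$ for large $n$, so Lemma~\ref{lem:walks} gives $M_2(H)\ge pk/16$ with $k=p-\Delta(H)$. Strictly, this is applied to the disjoint union of the $H_i$, where $M_2$ counts pairs across parts. Since $p=\Theta(n)$, this gives $k=O(M_2/n)=O(z+1)$.
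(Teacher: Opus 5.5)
Your proof is correct and follows the paper's argument. It uses the clique bound $p-q\le C(A+z+1)$ from Lemma~\ref{lem:standard-completions}, then \eqref{eq:coarse} to get $S_2\le C(A+z+1)$ and both directions of $|p-q|$ and $M_2(H)$, then the bootstrap $A^2\le S_2$, and finally Lemma~\ref{lem:walks} for the hub. The only real difference is that you take $S_2=O(n)$ from Lemma~\ref{lem:linear-signed}, which needs the partition to be a maximum $r$-cut. The paper instead gets it directly from Weyl's inequality and \eqref{eq:variance-loss}, using only $p\le Bn$.

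The ``remaining issue'' in your Step~3 does not exist: once $A=O(\sqrt{z+1})$, the clique bound already gives $p-q=O(z+1)$, as you eventually note. The detour through exact products, the cut structure and \eqref{eq:fine} (which in any case requires $A=O(1)$) can simply be deleted. Similarly, $f_r(p/n)\le f_r(q/n)-c(q-p)_+/n$ holds only when $p\le q$; when $p>q$ you need the extra $C(p-q)_+/n$ term, which your final display does in effect absorb.
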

\begin{proof}
Restore the missing cross-edges and write $\overline G=K+H$. Since $p\le Bn$, Weyl's inequality and the spectral lower bound give $\mu\ge\al n-O(\sqrt n)$. Equation \eqref{eq:variance-loss} first yields $S_2=O(n^{3/2})$, so $A=o(n)$ and all parts have size $(1/r+o(1))n$. On the interval between $\mu$ and $\lam(\overline G)$, equation \eqref{eq:R-expansion} changes the response by $O(n^{-1})$, while the base-response derivative has order $n^{-1}$. Hence $\lam(\overline G)-\mu=O(1)$, and another use of \eqref{eq:variance-loss} gives $S_2=O(n)$. Both roots are now $\al n+O(1)$, so \eqref{eq:coarse} applies.

By Lemma~\ref{lem:standard-completions},
$N_{K_{r+1}}(G)\ge \sum_i p_i\prod_{j\ne i}n_j-C_rzpn^{r-2}$.
Expanding each product as $m^{r-1}+O_r((A+1)n^{r-2})$ and using
$c_r(n)=m^{r-1}+O_r(n^{r-2})$ gives
\begin{equation}\label{eq:upper-p}
 p-q\le C\frac{p(A+z+1)+q}{n}
 \le C'(A+z+1).
\end{equation}

Drop the $M_2$ term in \eqref{eq:coarse} and compare with the spectral hypothesis. Since $f_r'$ is bounded on the relevant interval, $S_2\le C(A+z+1)$.
As $A^2\le S_2$, this gives $S_2=O(z+1)$ and $A=O(\sqrt{z+1})$. Equation \eqref{eq:upper-p} gives the upper bound for $p-q$; the same spectral comparison, using $f_r'(c)\ge2$, gives the lower bound. Keeping the $M_2$ term in \eqref{eq:coarse} yields $M_2(H)=O(n(z+1))$. If $q/m\to\gamma>0$, then $p=\Theta(n)$, and Lemma~\ref{lem:walks} implies
$p-\Delta(H)=O(z+1)$. These are the estimates in \eqref{eq:preliminary-short}.
\end{proof}

\begin{lemma}\label{lem:deletion}
In the setting of Lemma~\ref{lem:preliminary}, suppose $z=o(n)$ and let $\overline G=K+H$. If $p=o(n)$, then
\begin{equation}\label{eq:flat-deletion}
 \lam(\overline G)-\lam(G)=\frac{2z}{n}(1+o(1)).
\end{equation}
If $q/m\to\gamma>0$, let $v$ have internal degree $p-k$, let $z_h$ missing cross-edges meet $v$, and put $z_o=z-z_h$. With $\gamma_n=q/m$,
\begin{equation}\label{eq:hub-deletion}
 \lam(\overline G)-\lam(G)
 =\frac{(2+2\gamma_n/(r-1))z_h+2z_o}{n}+o(z/n).
\end{equation}
The error is zero when $z=0$.
\end{lemma}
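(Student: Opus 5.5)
The plan is a first-order perturbation argument around the Perron vector of $\overline G$: the missing cross-edges are treated as a small symmetric perturbation $-E_D$ of $A(\overline G)$, with
\[
 E_D=\sum_{ab\in D}(e_ae_b^{\mathsf T}+e_be_a^{\mathsf T}).
\]
Let $x$ be the unit Perron vector of $\overline G$. The Rayleigh quotient gives the lower bound $\lam(G)\ge x^{\mathsf T}A(G)x=\lam(\overline G)-2\sum_{ab\in D}x_ax_b$. Symmetrically, if $y$ is the unit Perron vector of $G$, then $\lam(\overline G)\ge\lam(G)+2\sum_{ab\in D}y_ay_b$. Combining the two,
\[
 2\sum_{ab\in D}y_ay_b\le\lam(\overline G)-\lam(G)\le2\sum_{ab\in D}x_ax_b .
\]
It therefore suffices to evaluate $x_ax_b$ and $y_ay_b$ on the missing edges to relative accuracy $o(1)$, uniformly, and then sum. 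When $z=0$ both sides vanish, so the error is exactly zero.

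In the case $p=o(n)$, Lemma~\ref{lem:preliminary} gives $A=O(\sqrt{z+1})=o(n)$ and $|p-q|=O(z+1)$, so $p=o(n)$ as well. I will apply the flat estimate \eqref{eq:flat-coordinates} of Lemma~\ref{lem:perron-coordinates} twice: to $\overline G$ (weights $1$, $Z=0$) and to $G$ (the $z=o(n)$ cross-edges lowered to weight $0$). Both have $E_0=A+p+z+1=o(n)$, and their spectral radii are $\al n+O(1)$ by the hypothesis together with \eqref{eq:coarse}. Hence every coordinate of $x$ and of $y$ equals $(1+o(1))/\sqrt n$, each product is $(1+o(1))/n$, and both bounds equal $2z(1+o(1))/n$. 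This proves \eqref{eq:flat-deletion}.

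In the case $q/m\to\gamma>0$, Lemma~\ref{lem:preliminary} gives $k=p-\Delta(H)=O(z+1)=o(n)$, $A=o(n)$ and $p=q+O(z+1)$. Taking $v$ as the hub, the hub estimate \eqref{eq:hub-coordinates} applies with $P_v=p-k$, $K=k$, and $E=A+k+z+1=o(n)$; I use it for both $\overline G$ and $G$. A missing edge avoiding $v$ then has product $(1+o(1))/n$. A missing edge at $v$ has product
\[
 \frac{1+P_v/(\al n)+o(1)}{n}.
\]
Since $P_v=q+o(n)$ and $q/(\al n)=\gamma_n m r/((r-1)n)+o(1)=\gamma_n/(r-1)+o(1)$, this product is $(1+\gamma_n/(r-1)+o(1))/n$. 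Summing over $D$ gives
\[
 \frac{2(1+\gamma_n/(r-1))z_h+2z_o}{n}+o(z/n)
\]
for both bounds, which is \eqref{eq:hub-deletion}.

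The main obstacle is ensuring the hypotheses of Lemma~\ref{lem:perron-coordinates} hold for $G$ itself. Specifically, I need $\lam(G)=\al n+O(1)$, which follows from the assumed lower bound and $\lam(G)\le\lam(\overline G)=\al n+O(1)$ via \eqref{eq:coarse}. I also need the errors $O(E/n)$ to be $o(1)$ uniformly, which holds because $E=o(n)$. A further point of care is that $z$ may be as small as $O(1)$. The relative error $o(1)$ on each product then still yields $o(z/n)$, because the estimate is multiplicative on every single edge rather than additive.
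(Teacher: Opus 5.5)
Your proof is correct, but it takes a different route from the paper's.

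\textbf{The paper's route.} The paper interpolates the weights of all missing cross-edges continuously from $0$ to $1$. It applies Lemma~\ref{lem:perron-coordinates} uniformly to every intermediate weighted graph, which is why that lemma is stated for weights in $[0,1]$. It then integrates the derivative $2x_ax_b$ of the simple Perron root along the path.

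\textbf{Your route.} You use the two-sided Rayleigh-quotient sandwich
\[
 2\sum_{ab\in D}y_ay_b\le\lam(\overline G)-\lam(G)\le2\sum_{ab\in D}x_ax_b ,
\]
where $x,y$ are the unit Perron vectors of $\overline G$ and $G$. You therefore need the flat and hub coordinate estimates only at the two endpoint graphs, where every weight is $0$ or $1$. You never need the intermediate weighted graphs, and you never need simplicity or differentiability of the eigenvalue along a path.

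\textbf{Common input.} Both arguments use the same ingredients: $\lam=\al n+O(1)$ at both ends, from the hypothesis and \eqref{eq:coarse}, and $E=o(n)$ in Lemma~\ref{lem:perron-coordinates}. Both produce a multiplicative $(1+o(1))$ error on each missing edge, hence the $o(z/n)$ total. As in the paper, your hub case implicitly takes $v$ to be a maximum-internal-degree vertex, so that $k=O(z+1)$ by Lemma~\ref{lem:preliminary}.

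\textbf{What each buys.} Your sandwich is more elementary and fully sufficient for this lemma. The paper's interpolation is a single framework that it reuses later, in Lemma~\ref{lem:integer} and in Lemma~\ref{lem:core-expansion}. There internal edges are moved or restored together with cross-edges, and integrating $2x_ax_b$ along one path handles all the edits at once.
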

\begin{proof}
Change the weights of all missing cross-edges continuously from zero to one. Every intermediate graph is connected for large $n$. Its largest eigenvalue lies between those of $G$ and $\overline G$, and both endpoints equal $\al n+O(1)$ by Lemma~\ref{lem:preliminary} and \eqref{eq:coarse}. Thus Lemma~\ref{lem:perron-coordinates} applies uniformly along the interpolation.

If $p=o(n)$, Lemma~\ref{lem:preliminary} gives $A=o(n)$, and $p+z=o(n)$. Hence \eqref{eq:flat-coordinates} holds throughout. The derivative of the simple Perron root with respect to the weight of an edge $ab$ is $2x_ax_b$. Integrating this derivative over the $z$ missing edges proves \eqref{eq:flat-deletion}.

Suppose that $q/m\to\gamma>0$. Lemma~\ref{lem:preliminary} gives
$A=o(n)$, $k=o(n)$ and $p=q+O(z+1)$. Formula \eqref{eq:hub-coordinates}, with $P_v=p-k$, gives uniformly
\[
 x_a=\frac{1+O((A+k+z+1)/n)}{\sqrt n}\quad(a\ne v),
 \qquad
 x_v=\frac{1+\gamma_n/(r-1)+O((A+k+z+1)/n)}{\sqrt n}.
\]
A missing edge meeting $v$ therefore contributes
$[2+2\gamma_n/(r-1)]/n+o(n^{-1})$ to the integrated change, while a missing edge avoiding $v$ contributes $2/n+o(n^{-1})$. Since $A+k+z=o(n)$, the sum of the errors is $o(z/n)$. This proves \eqref{eq:hub-deletion}. If $z=0$, no interpolation is made and the error is zero.
\end{proof}

\begin{proposition}\label{thm:reduction}
Fix $r\ge3$, $B\ge1$, $0<\tau<1$ and $L\ge0$. There are $\eps_1,C,n_0>0$ such that, if $n\ge n_0$, $1\le q\le(1-\tau)m$, $p\le Bn$, $z\le\eps_1n$, $N_{K_{r+1}}(G)\le q c_r(n),\qquad \lam(G)\ge\al n+f_r(q/n)-L/n$,
then
\begin{equation}\label{eq:bounded-conclusion}
 z+|p-q|+\max_i|n_i-n/r|\le C,
 \qquad M_2(H)\le Cn.
\end{equation}
If $q\ge\eta m$ for fixed $\eta>0$, then $p-\Delta(H)=O(1)$.
\end{proposition}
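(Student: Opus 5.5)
The plan is to upgrade the estimates of Lemma~\ref{lem:preliminary}, which hold with error $O(z+1)$, to $z=O(1)$. I will balance two effects. Deleting cross-edges costs spectral radius at the rate given by Lemma~\ref{lem:deletion}. On the other hand, deleting cross-edges destroys cliques, which lets $p$ exceed $q$, and the extra internal edges gain at rate $f_r'(p/n)/n$ per edge. The hypothesis $q\le(1-\tau)m$ should make the cost win by a factor bounded away from one.

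\textbf{Step 1: setup.} I argue by contradiction along a sequence with $z\to\infty$, passing to a subsequence with $q/m\to\gamma\in[0,1-\tau]$. Taking $\eps_1$ small ensures $z=o(n)$ in the relevant sense. If that is not automatic, I first run the argument with $z\le\eps_1 n$ and keep every error in the form $O(\eps_1 z/n)$. Lemma~\ref{lem:preliminary} then gives $S_2,|p-q|=O(z+1)$, $A=O(\sqrt{z+1})$ and $M_2(H)=O(n(z+1))$. When $\gamma>0$ it also gives a hub $v$ with $k=p-\Delta(H)=O(z+1)$.

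\textbf{Step 2: clique budget.} I split $z=z_h+z_o$ according to whether the missing edge meets $v$; when $\gamma=0$, put $z_h=0$. By Lemma~\ref{lem:standard-completions} and \eqref{eq:standard-completions-hub}, a missing edge at the hub destroys at most $d_H(v)n^{r-2}(1+o(1))$ standard completions. A missing edge avoiding the hub destroys $O((k+1)n^{r-2}+pn^{r-3})=o(\sqrt{z+1}\,n^{r-2})+O(n^{r-2})$. Since $\prod_{j\ne i}n_j=m^{r-1}(1+O(A/n))$, the bound $N_{K_{r+1}}(G)\le qc_r(n)$ gives
\[
 p-q\le \gamma_n z_h+o(z)+O(A+1).
\]
Here $\gamma_n=q/m$, and the constant in front of $z_h$ is exactly $\gamma_n$ up to $1+o(1)$, because each hub-incident missing edge kills about $d_H(v)m^{r-2}\approx\gamma_n m^{r-1}$ completions.

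\textbf{Step 3: spectral comparison.} I apply \eqref{eq:coarse} to $\overline G=K+H$, dropping the $M_2$ term, and subtract \eqref{eq:hub-deletion} (or \eqref{eq:flat-deletion} when $\gamma=0$). Using $f_r'(c)=2+2c/\al$ at $c=q/n\approx\gamma/r$, which equals $2+2\gamma_n/(r-1)+o(1)$, I obtain
\[
 \lam(G)-\al n-f_r(q/n)\le \frac{(2+\tfrac{2\gamma_n}{r-1})\bigl[(p-q)-z_h\bigr]-2z_o-\al S_2}{n}+o\Bigl(\frac zn\Bigr)+O\Bigl(\frac{A+1}{n}\Bigr).
\]
Substituting Step~2 gives a numerator at most $-(2+\tfrac{2\gamma_n}{r-1})(1-\gamma_n)z_h-2z_o+o(z)+O(\sqrt{z+1})$. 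Since $1-\gamma_n\ge\tau$, this is at most $-2\tau z+o(z)+O(\sqrt{z+1})$.

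Comparing with the hypothesis $\ge -L/n$ forces $z=O(1)$. Lemma~\ref{lem:preliminary} then gives $S_2,A,|p-q|=O(1)$, and keeping the $M_2$ term in \eqref{eq:coarse} gives $M_2(H)\le Cn$. For the final claim with $q\ge\eta m$, we have $p=\Theta(n)$ and $p\ge16$. Lemma~\ref{lem:walks} gives $pk/16\le M_2(H)\le Cn$, hence $k=O(1)$.

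\textbf{Main obstacle.} The delicate point is Step~2 at the hub, where the destroyed-completion count must match exactly $\gamma_n$ per missing edge with only an $o(1)$ relative error. A crude bound $C_rpn^{r-2}$ would destroy the margin $1-\gamma_n\ge\tau$. To get the exact constant, I will count directly: for a missing edge $vb$ with $b\in V_j$, the completions lost are those of internal edges $vw$ that choose $b$ in part $j$. There are $d_H(v)\prod_{l\ne i,j}n_l$ of them, which equals $(p-k)m^{r-2}(1+O(A/n))$. Double losses only help the inequality, since they make the count larger, so the union bound in this direction is harmless. The remaining errors, $o(z)$ from Lemma~\ref{lem:deletion} and $O(\sqrt{z+1})$ from $A$, are sublinear in $z$ and are absorbed.
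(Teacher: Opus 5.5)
Your proposal is correct and follows the paper's proof essentially step for step. The shared steps are the contradiction along a sequence with $z=o(n)$ and $z\to\infty$, the split $z=z_h+z_o$ at the hub (with $z_h=0$ and \eqref{eq:flat-deletion} when $\gamma=0$), the refined count $\ell\le\gamma_n z_h+o(z)$, and the combination of \eqref{eq:coarse} with \eqref{eq:hub-deletion}, which yields $-a_n(1-\gamma_n)z_h-2z_o+o(z)\le-2\tau z+o(z)$.

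Two small slips do not affect the argument. First, the loss per hub edge should read $d_H(v)m^{r-2}(1+o(1))$ rather than $d_H(v)n^{r-2}(1+o(1))$, as you correctly use later. Second, the per-edge bound $O((k+1)n^{r-2})$ for hub-avoiding edges is not $o(\sqrt{z+1}\,n^{r-2})$, since only $k=O(z+1)$ is known. Summed over the $z_o$ edges, however, it contributes $O(z(z+1)/n)=o(z)$ to the bound on $\ell$, which is all you need.
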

\begin{proof}
It is enough to exclude a sequence with $z=o(n)$ and $z\to\infty$. Pass to a subsequence with $q/m\to\gamma\in[0,1-\tau]$. By Lemma~\ref{lem:preliminary}, $A=o(z),\qquad S_2=O(z),\qquad \ell:=p-q=O(z)$.
If $\gamma=0$, then $p=o(n)$. The sharper first inequality in \eqref{eq:upper-p} gives $\ell_+=o(z)$, where $\ell_+=\max\{\ell,0\}$. Equations \eqref{eq:coarse} and \eqref{eq:flat-deletion} give
\[
 n\{\lam(G)-\al n-f_r(q/n)\}
 \le(2+o(1))\ell_+-\al S_2-2z+o(z)
 \le-2z+o(z),
\]
contrary to the fixed lower bound $-L$.

Suppose $\gamma>0$. Let $v$ have internal degree $p-k$, where $k=O(z)$, and split the missing edges into $z_h$ meeting $v$ and $z_o$ avoiding it. Put $\gamma_n=q/m,\qquad b_n=\frac{2\gamma_n}{r-1},\qquad a_n=2+b_n$.
Each remaining internal edge is disjoint from all but at most two of the edges at $v$, and pairs of remaining edges contribute only $O(k^2)$. Hence $M_2(H)=pk+O(k^2+k)$.
Expanding $f_r(p/n)$ about $q/n$ in \eqref{eq:coarse}, and then applying \eqref{eq:hub-deletion}, yields
\begin{equation}\label{eq:scaled-spectrum}
 n\{\lam(G)-\al n-f_r(q/n)\}
 =a_n\ell-\al S_2-b_nk-a_nz_h-2z_o+o(z).
\end{equation}
Here $\ell^2/n$, $k^2/n$, and $A+1$ are $o(z)$.

We refine the clique count according to the position of a missing edge. Lemma~\ref{lem:standard-completions} shows that a missing cross-edge meeting $v$ destroys at most
$pm^{r-2}+O_r(p(A+1)n^{r-3}+n^{r-2})$ standard completions, while a missing edge avoiding $v$ destroys only $O_r((k+1)n^{r-2})$ completions. Summing these estimates gives
\[
 \ell\le(p/m)z_h+O_r\left(A+1+\frac{z(A+k+1)}n\right)
 =\gamma_nz_h+o(z).
\]
Putting this in \eqref{eq:scaled-spectrum} gives
\begin{equation}\label{eq:weighted-defect}
 n\{\lam(G)-\al n-f_r(q/n)\}
 \le-\al S_2-b_nk-2z_o-a_n(1-\gamma_n)z_h+o(z).
\end{equation}
Since $1-\gamma_n\ge\tau$ and $a_n\ge2$, the right side of \eqref{eq:weighted-defect} is at most $-2\tau z+o(z)$, another contradiction.

If no fixed $\eps_1,C$ forced $z\le C$, then for each integer $j$ one could choose a counterexample with $n\ge j$, $z\le n/j$, and $z>j$. This would produce a forbidden sequence with $z=o(n)$ and $z\to\infty$. Thus a fixed small linear range forces $z=O(1)$. Lemma~\ref{lem:preliminary} gives \eqref{eq:bounded-conclusion}; when $q\ge\eta m$, Lemma~\ref{lem:walks} also gives $p-\Delta(H)=O(1)$.
\end{proof}

Near $q=m$ the coefficient of $z_h$ in the preceding inequality vanishes. The correct variable is the actual degree defect of the vertex meeting almost all internal edges.

\begin{lemma}\label{lem:core-expansion}
Suppose $q/m\to1$, $z=o(n)$ and the hypotheses of Corollary~\ref{cor:global-entry} hold. Choose a maximum-internal-degree vertex $v$. In the partition induced on $G-v$, let $b_i$ be the part sizes, let $k$ and $z_o$ be the numbers of internal and missing cross-edges, and put
\[
 N=n-1,
 \quad b_0=N/r,
 \quad S_c=\sum_i(b_i-b_0)^2,
 \quad A_c=\max_i|b_i-b_0|,
 \quad t=n-1-d_G(v),
 \quad h=m-q,
 \quad \Delta=t-h.
\]
If $w_i$ is the number of non-neighbors of $v$ in core part $i$, set
\[
 F_0(x)=\sum_i\frac{b_i}{x+b_i},\qquad
 W(x)=\sum_i\frac{w_i}{x+b_i},\qquad
 Q(x)=\sum_i\frac{w_i^2}{x+b_i}.
\]
For the graph $C_v$ obtained before the core edits, its spectral radius is the root near $\al n$ of
\begin{equation}\label{eq:core-response}
 F_0(x)+\frac{x(1-W(x))^2}{x(x+1)-t+Q(x)}=1.
\end{equation}
Let $\rho_n(t)$ be the root near $\al n$ of the comparison equation
\begin{equation}\label{eq:proxy}
 \frac{rb_0}{x+b_0}+
 \frac{x\left(1-t/(x+b_0)\right)^2}{x(x+1)-t}=1.
\end{equation}
Then
\begin{equation}\label{eq:core-spectrum}
 \begin{split}
 n\{\lam(G)-\rho_n(h)\}
 ={}&-\al S_c+2k-2z_o-\frac2\al\Delta\\
 &+o(S_c+k+z_o+|\Delta|+1),
 \end{split}
\end{equation}
and $\rho_n(h)=\al n+f_r(q/n)+O_r(n^{-1})$. If $\chi_s=1$ for $s=0$ and $0$ otherwise, the clique bound implies
\begin{equation}\label{eq:core-budget}
 \Delta\ge k-\chi_s-o(S_c+k+z_o+1).
\end{equation}
If $S_c,k,z_o$ are bounded and $s>0$, then $\Delta\ge k$ for large $n$.
\end{lemma}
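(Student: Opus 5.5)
We will treat the four assertions of the lemma in order: the exact response equation \eqref{eq:core-response}, the spectral expansion \eqref{eq:core-spectrum} together with the location of $\rho_n(h)$, and finally the clique budget \eqref{eq:core-budget}.

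\emph{The response equation.} Regard $C_v$ as the complete $r$-partite core $K_r(b_1,\ldots,b_r)$ on $N=n-1$ vertices together with the extra vertex $v$. The vertex $v$ is adjacent to every core vertex except the $t$ non-neighbors, of which $w_i$ lie in core part $i$. For $x$ above the core spectrum, the Perron vector of the core-plus-$v$ graph is determined by solving $(xI-A_{\rm core})y=x_v\,\mathbf a_v$, where $\mathbf a_v$ is the adjacency column of $v$. The core resolvent is explicit. On each part the block is $xI$ minus the all-ones coupling to the other parts, and the Sherman--Morrison identity, applied once per part as in the derivation of \eqref{eq:response}, gives the quadratic form $\mathbf a_v^{\mathsf T}(xI-A_{\rm core})^{-1}\mathbf a_v$. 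This form is written in terms of $F_0$, $W$ and $Q$. The Schur-complement equation at $v$, namely $x=\mathbf a_v^{\mathsf T}(xI-A_{\rm core})^{-1}\mathbf a_v$, can then be rearranged into the form \eqref{eq:core-response} after multiplying through by $1-F_0(x)$. The proxy \eqref{eq:proxy} is the same equation in the balanced case $b_i=b_0$. In that case $W=t/(x+b_0)$, and we replace $Q$ by $0$; this is legitimate because $Q=O(t^2/n)$, which is a lower-order quantity when $t=O(h)$.

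\emph{The spectral expansion.} We would first evaluate $\rho_n(h)$ by substituting $x=\al n+w$ into \eqref{eq:proxy}, using $h=m-q$ and $t=h$. Expanding to order $n^{-1}$ should reproduce $w=f_r(q/n)+O(1/n)$; this is consistent with Theorem~\ref{thm:profiles} at $\gamma=1$. Next, compare $\lam(C_v)$ with $\rho_n(h)$ in two steps.
\begin{enumerate}
\item Differentiate the left side of \eqref{eq:core-response} in $t$ at fixed $x$. Its derivative in $x$ is $-1/n+O(n^{-2})$, so the ratio of the two derivatives gives a shift of $-(2/\al)\Delta/n$. We will have to check that the constant $2/\al$ comes out of the factor $(1-W)^2$ evaluated at $W\approx t/(x+b_0)$.
\item The deviation $b_i-b_0$ contributes $-\al S_c/n$, exactly as in \eqref{eq:base-variance-identity}. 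Here the terms linear in $b_i-b_0$ cancel because $\sum_i b_i=N$, and the cross terms with $w_i$ are $o(S_c+|\Delta|+1)/n$.
\end{enumerate}
Finally, we add the $k$ core internal edges and delete the $z_o$ core cross-edges. In the interpolation argument of Lemma~\ref{lem:deletion}, all core Perron coordinates are flat by Lemma~\ref{lem:perron-coordinates}, since $v$ carries all non-core weight. Each added edge therefore contributes $2/n$ and each deleted edge $-2/n$, with relative errors $o(1)$. Summing these contributions gives \eqref{eq:core-spectrum}.

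\emph{The clique budget.} Count the cliques through $v$: up to $O$-errors from the $z_o$ missing edges, there are $\prod_i(b_i-w_i)$ of them. The $k$ core edges contribute a further $k\,m^{r-1}(1+o(1))$ standard completions, by Lemma~\ref{lem:standard-completions}. Expanding the product, and using $\sum_i w_i=t$ with the maximum-cut condition to place the deficit in $v$'s own part, the product should equal $(m-t)\,c_r(n)/m$ plus lower-order terms. The bound $N_{K_{r+1}}\le q c_r(n)=(m-h)c_r(n)$ then yields $t-h\ge k-o(\cdot)$. The boundary term $\chi_s$ comes from the exact value of $c_r(n)$ when $s=0$, where the parts all have size $m$ and the vertex $v$ itself shifts one part size. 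When $s>0$ and all the defects are bounded, every error term is a genuine integer inequality with slack at order $n^{r-2}$, and this gives $\Delta\ge k$ exactly.

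The main obstacle is the exact bookkeeping of constants in the $t$-derivative, which produces the coefficient $2/\al$, and of the $s$-dependent terms in the clique count. A mistake of a single unit there changes $\chi_s$, so for both we would carry exact rational expressions rather than asymptotics.
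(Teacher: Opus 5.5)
Your plan follows the paper's proof. The Schur complement at $v$, with the core resolvent computed by Sherman--Morrison, is exactly the $|S|=1$ evaluation the paper quotes from Zhang, and your rearrangement after multiplying by $1-F_0$ does give \eqref{eq:core-response}. The spectral estimate uses the same decomposition $\bigl[\lam(C_v)-\rho_n(t)\bigr]+\bigl[\rho_n(t)-\rho_n(h)\bigr]+\bigl[\lam(G)-\lam(C_v)\bigr]$, with the last bracket obtained by integrating $2x_ax_b$ along the core edits. The budget is a union bound over the non-neighbors of $v$ plus the disjoint standard completions of the $k$ core edges. The one real variation is that you obtain $\rho_n(h)=\alpha n+f_r(q/n)+O_r(n^{-1})$ by expanding \eqref{eq:proxy} directly. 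This works: at $x=\alpha n+w$ the equation reduces to $w+\alpha=(\alpha+q/n)^2/\alpha+O(n^{-1})$. It is more direct than the paper's detour through a star extension and \eqref{eq:fine}.

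As written, however, there is a real omission and several slips.

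\emph{The a priori reduction is missing.} You never show that the maximum-internal-degree vertex meets all but $o(n)$ internal edges, nor that $S_c,k,z_o=o(n)$, $t=h+o(n)=o(n)$ and $\lam(C_v)=\alpha n+O(1)$. The paper gets these from Theorem~\ref{thm:profiles} and Lemma~\ref{lem:preliminary}, via $t=n_i-1-(p-k)+z_h$. Every asymptotic step depends on them:
\begin{itemize}
\item The $t$-derivative of \eqref{eq:proxy} is $-\frac{2}{\alpha n}\bigl(1-\frac tn+O(n^{-1})\bigr)$, so the coefficient $2/\alpha$ requires $t=o(n)$.
\item The cross term from $W-t/(x+b_0)=O(A_ct/n^2)$ contributes $O(A_ct/n)$, which is $o(S_c+1)$ only because $t=o(n)$.
\item Dropping $Q$ costs $O(t^2/n^2)=o(1)$, again because $t=o(n)$. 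Your justification via $t=O(h)$ is not available, for example when $h=0$.
\item Flat core coordinates during the interpolation need $k$, $z_o$, $z_h$ and the part deviations to be $o(n)$.
\end{itemize}

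\emph{The $t$-differentiation is in the wrong equation.} It must be done in \eqref{eq:proxy}, not in \eqref{eq:core-response}. In the latter, $W$ and $Q$ depend on the individual $w_i$, not on $t$ alone.

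\emph{The budget step needs correcting.} The product $\prod_i(b_i-w_i)$ should only be bounded below. Each non-neighbor destroys at most $m^{r-1}+O_r((A_c+1)n^{r-2})$ of the $\prod_ib_i=m^r+(s-1)m^{r-1}+O_r((S_c+1)n^{r-2})$ transversal cliques. So the leading term is $m^r+(s-1-t)m^{r-1}$, which agrees with $(m-\chi_s-t)c_r(n)$ to leading order, not $(m-t)c_r(n)/m$. This bound needs no information about where the non-neighbors lie, so the maximum-cut placement is superfluous. An equality of the form you state is not available in general, because the nonnegative higher-order terms of the product need not fit in the error; only the lower bound is used.
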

\begin{proof}
Theorem~\ref{thm:profiles} and Lemma~\ref{lem:preliminary} show that a maximum-internal-degree vertex is a vertex meeting almost all internal edges and give
\begin{equation}\label{eq:core-preliminary}
 S_c=o(n),\qquad k=o(n),\qquad z_o=o(n),\qquad t=o(n).
\end{equation}
Indeed, if $v$ belongs to the original part $V_i$ and $z_h$ missing cross-edges meet $v$, then $t=n_i-1-(p-k)+z_h=n_i-1-p+k+z_h=h+o(n)$,
because $n_i=m+o(n)$, $p=q+o(n)$, and $k+z_h=o(n)$. Throughout the rest of the proof we work under \eqref{eq:core-preliminary}.

Equation~\eqref{eq:core-response} is the case $|S|=1$ of the one-set walk-series identity in Section~1 of Zhang \cite{ZhangWalks}. For completeness, we record only its evaluation on the present complete multipartite core. Write $d_i=b_i-w_i$ and $d=N-t$. The resolvent term is
\[
 \frac1x\left(d-\sum_i\frac{d_i^2}{x+b_i}\right)
 +\frac{\left(\sum_i d_i/(x+b_i)\right)^2}{1-F_0(x)}.
\]
Using $\sum_i d_i/(x+b_i)=F_0-W$ and
$\sum_i d_i^2/(x+b_i)=N-xF_0-2t+2xW+Q$ gives \eqref{eq:core-response}.

The cone over $C$ has its largest root at $\al n+O(1)$ because $S_c=o(n)$. During the deletion of the $t=o(n)$ edges incident with $v$, Lemma~\ref{lem:perron-coordinates} gives $O(n^{-1/2})$ for every unit Perron coordinate. Integration of the edge derivatives therefore shows that the root changes by $o(1)$. On this root interval, $W=O(t/n)$ and $Q=O(t^2/n)$, so the derivative of the extra summand in \eqref{eq:core-response} is $O(n^{-2})$, whereas
$F_0'(x)=-1/n+O(n^{-2})$. Thus the relevant root is unique and the response derivative is $-1/n+O(n^{-2})$.

We compare it with the following simpler equation. Since $\sum_i(b_i-b_0)=0$,
\[
 F_0(x)-\frac{rb_0}{x+b_0}
 =-\frac{xS_c}{(x+b_0)^3}
 +O_r(A_cS_c/n^3)
 =-\frac{\al S_c}{n^2}
 +O_r((A_c+1)S_c/n^3).
\]
Moreover,
\[
 W(x)-\frac{t}{x+b_0}=O_r(A_ct/n^2),
 \qquad
 0\le Q(x)\le O_r(t^2/n).
\]
The partial derivatives of the extra summand in \eqref{eq:core-response} with respect to $W$ and $Q$ have orders $O(n^{-1})$ and $O(n^{-3})$, respectively. By the mean-value theorem and the response derivative,
\begin{equation}\label{eq:core-vs-proxy}
 n\{\lam(C_v)-\rho_n(t)\}
 =-\al S_c+O_r\left(
 \frac{(A_c+1)S_c}{n}+\frac{A_ct}{n}+\frac{t^2}{n^2}
 \right).
\end{equation}
Differentiating \eqref{eq:proxy} with respect to $t$ gives
\begin{equation}\label{eq:proxy-derivative}
 \rho_n'(t)=-\frac{2}{\al n}
 \left(1+O_r\left(\frac{t+1}{n}\right)\right),
\end{equation}
because the partial derivatives in $t$ and $x$ are
$-2/(\al n^2)(1+O_r((t+1)/n))$ and $-1/n+O_r(n^{-2})$. Hence \eqref{eq:proxy-derivative} gives $n\{\rho_n(t)-\rho_n(h)\} =-\frac2\al\Delta+o(|\Delta|)$,
with zero error when $t=h$.

Change the weights of all core edits continuously from $C_v$ to $G$. The complete-cross graph containing every internal edge that occurs during this interpolation has spectral radius $\al n+O(1)$ by \eqref{eq:coarse}, and every intermediate graph has spectral radius $\al n+O(1)$ by monotonicity and the average-degree lower bound. In the full $r$-part partition, $v$ meets all but $k=o(n)$ internal edges, while the part deviations and the number of lowered cross-edge weights are $o(n)$. Lemma~\ref{lem:perron-coordinates} therefore shows that every core coordinate is $(1+o(1))/\sqrt n$, uniformly during the interpolation. Integrating the edge derivatives gives
$n\{\lam(G)-\lam(C_v)\}=2k-2z_o+o(k+z_o)$.
In \eqref{eq:core-vs-proxy}, the error is $o(S_c+1)$ because $A_c^2\le S_c$ and $S_c,t=o(n)$. Combining the last three displays proves \eqref{eq:core-spectrum}.

To estimate $\rho_n(h)$, take a complete $r$-partite graph whose part sizes differ from $n/r$ by $O(1)$ and whose chosen part has size $m+1$, and insert a $q$-edge star there. For $s=0$, offset one other part to $m-1$; for $s>0$, distribute the remaining $s-1$ extra vertices among the other parts. Deleting the star center leaves a core whose part sizes differ from $(n-1)/r$ by $O(1)$ and exactly $h=m-q$ non-neighbors. Equations \eqref{eq:core-vs-proxy} and \eqref{eq:fine}, with $M_2=0$, give
$\rho_n(h)=\al n+f_r(q/n)+O_r(n^{-1})$.

It remains to prove the clique-count inequality. Write $b_i=m+c_i$, where $\sum_i c_i=s-1$. The complete core has $\prod_i b_i=m^r+(s-1)m^{r-1} +O_r((S_c+1)n^{r-2})$ cliques with one vertex in each part. A non-neighbor of $v$ destroys at most
$m^{r-1}+O_r((A_c+1)n^{r-2})$ of them, and a missing core cross-edge destroys at most $O_r(n^{r-2})$. Thus the cliques containing $v$ number at least $m^r+(s-1-t)m^{r-1} -O_r((S_c+1+t(A_c+1)+z_o)n^{r-2})$.
The $k$ internal core edges have disjoint standard-completion families, all avoiding $v$, of total size at least $km^{r-1}-O_r((k(A_c+1)+kz_o)n^{r-2})$.
On the other hand, $qc_r(n)=m^r+(\max\{s-1,0\}-h)m^{r-1} +O_r((h+1)n^{r-2})$.
Comparison with the clique hypothesis gives
\[
 \Delta\ge k-\chi_s-
 O_r\left(\frac{S_c+1+(t+k)(A_c+1)+z_o+kz_o+h}{n}\right).
\]
The error is $o(S_c+k+z_o+1)$ because $A_c\le\sqrt{S_c}$ and all the core quantities are $o(n)$. This proves \eqref{eq:core-budget}. If $S_c,k,z_o$ are bounded and $s>0$, the error tends to zero and integrality yields $\Delta\ge k$.
\end{proof}

\begin{theorem}\label{thm:core}
Fix $r\ge3$, $B\ge1$ and $L\ge0$. There are $\delta,\eps_0,C>0$ such that, for all sufficiently large $n$, if
\[
 (1-\delta)m\le q\le m,
 \quad p\le Bn,
 \quad z\le\eps_0n,
 \quad N_{K_{r+1}}(G)\le q c_r(n),
 \quad \lam(G)\ge\al n+f_r(q/n)-L/n,
\]
then some vertex $v$ satisfies, in the partition induced on $G-v$,
\begin{equation}\label{eq:core-conclusion}
 \max_i\left|b_i-\frac{n-1}{r}\right|+k+z_o+
 \left|n-1-d_G(v)-(m-q)\right|\le C.
\end{equation}
\end{theorem}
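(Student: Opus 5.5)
We argue by contradiction, in the same way as Proposition~\ref{thm:reduction}: it suffices to exclude a sequence of graphs with $q/m\to1$, $z=o(n)$ and
\[
 X:=S_c+k+z_o+|\Delta|\to\infty ,
\]
where $v$ is a maximum-internal-degree vertex. Bounding $X$ controls the quantity in \eqref{eq:core-conclusion}, because $A_c\le\sqrt{S_c}$ and $|t-h|=|\Delta|$. A diagonal choice of $\delta$ and $\eps_0$ then converts the sequential statement into fixed constants, exactly as at the end of the proof of Proposition~\ref{thm:reduction}. Along such a sequence Corollary~\ref{cor:global-entry} supplies a maximum cut with $p\le n$. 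Theorem~\ref{thm:profiles} and Lemma~\ref{lem:preliminary} then give \eqref{eq:core-preliminary}, so Lemma~\ref{lem:core-expansion} applies.

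The key step combines \eqref{eq:core-spectrum} with the spectral hypothesis. Since $\rho_n(h)=\al n+f_r(q/n)+O(n^{-1})$, the hypothesis $\lam(G)\ge\al n+f_r(q/n)-L/n$ gives
\[
 -\al S_c+2k-2z_o-\tfrac2\al\Delta\ge -L'-o(X+1).
\]
Next insert the budget \eqref{eq:core-budget}, $\Delta\ge k-\chi_s-o(S_c+k+z_o+1)$. Because $2/\al=2r/(r-1)>2$, we get
\[
 2k-\tfrac2\al\Delta\le -\tfrac{2}{r-1}k+\tfrac2\al\chi_s+o(\cdot).
\]
We also need to handle $|\Delta|$ rather than $\Delta$. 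When $\Delta\ge0$, the term $-\tfrac2\al\Delta$ is itself a penalty. When $\Delta<0$, the budget gives $|\Delta|\le \chi_s-k+o(\cdot)$, so $|\Delta|\le1+o(X)$.

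In either case we reach
\[
 c_r\,(S_c+k+z_o+|\Delta|)\le C+o(X+1)
\]
for some $c_r>0$. When $\Delta\ge0$, split $-\tfrac2\al\Delta$ as $-(\tfrac2\al-2)\Delta/2-\bigl(\tfrac2\al-(\tfrac2\al-2)/2\bigr)\Delta$, and use the budget on the second piece to absorb $2k$. This bounds $X$, contradicting $X\to\infty$.

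The main obstacle is that the error terms in Lemma~\ref{lem:core-expansion} are $o(X+1)$ only under \eqref{eq:core-preliminary}, and $\delta$, $\eps_0$ must make this hold uniformly. I would secure this by the subsequence argument, in which $q/m\to1$ follows from $q\ge(1-\delta)m$ with $\delta\to0$ along the counterexample sequence. A second concern is that $v$ need not be unique. Since the conclusion only asks for some vertex, we simply fix any maximum-internal-degree vertex; Lemma~\ref{lem:core-expansion} already treats it.
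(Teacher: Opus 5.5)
Your proposal is correct and follows essentially the paper's argument. Both reduce to a sequence with $q/m\to1$ and $z=o(n)$, then combine \eqref{eq:core-spectrum} with the spectral hypothesis and the budget \eqref{eq:core-budget}, and use $2/\alpha>2$ before finishing with the same diagonal step. The only difference is bookkeeping: the paper avoids your sign case split by substituting $u=\Delta-k$, which gives $-\alpha S_c-\frac{2}{r-1}k-2z_o-\frac{2}{\alpha}u$ together with $u_-\le 1+o(T)$ in a single step.
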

\begin{proof}
It is enough to treat a sequence with $q/m\to1$ and $z=o(n)$. Theorem~\ref{thm:profiles} and Lemma~\ref{lem:preliminary} give
$S_c,k,z_o,t=o(n)$, so Lemma~\ref{lem:core-expansion} applies. Put
$u=\Delta-k$ and $T=S_c+k+z_o+1$. Equations \eqref{eq:core-spectrum} and \eqref{eq:core-budget} give
\[
 n\{\lam(G)-\rho_n(h)\}
 =-\al S_c-\frac{2}{r-1}k-2z_o-\frac2\al u
 +o(T+|u|),
\]
while $u_-\le1+o(T)$. The spectral lower bound therefore forces
$S_c,k,z_o,u_+$ to be bounded, and then all quantities in \eqref{eq:core-conclusion} are bounded. Suppose that no such fixed constants exist. Taking one counterexample for each $j$ gives a sequence with $q/m\to1$ and $z=o(n)$, contrary to what we have just proved. Hence fixed constants $\delta,\eps_0,C$ exist.
\end{proof}

\section{Proofs of the main theorems}\label{sec:exact}
\subsection{The sublinear case}\label{sec:sublinear}
Throughout this subsection $r$ is fixed, $\alpha=(r-1)/r$, and $q=o(n)$. All assertions about such sequences are uniform in the sense that their failure for arbitrarily large orders would produce a contradicting sequence. In particular, bounded values of $q$ are included.

\begin{lemma}\label{lem:second-layer}
Let $G$ be an extremal graph for the strict problem, and suppose $q=o(n)$. For sufficiently large $n$, there is a maximum $r$-cut for which $p=q$, $z=0$, and the complete multipartite base is $B^-_{n,r}$ or $B^+_{n,r}$.
\end{lemma}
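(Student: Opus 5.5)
Throughout, $G$ is an extremal graph for the strict problem with $q=o(n)$. The plan is to squeeze its structure between the lower bound supplied by the candidate graphs and the budget and expansion estimates of Section~\ref{sec:tools}.

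\textbf{Lower bound.} The graph $E_{n,r,q}$ (or $U_{m,q}$) satisfies the strict inequality, and by \eqref{eq:fine} its spectral radius is $\al n+f_r(q/n)-O(1/n)$. Hence $\lam(G)\ge\al n+f_r(q/n)-L/n$ for a fixed $L$.

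\textbf{Bounded edit data.} Corollary~\ref{cor:global-entry} gives a maximum $r$-cut with $p\le n$ and $z\le\eps n$. Proposition~\ref{thm:reduction}, with $\tau=1/2$ and $\gamma=0$, then gives
\[
z+|p-q|+\max_i|n_i-n/r|\le C,\qquad M_2(H)\le Cn .
\]

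\textbf{Showing $z=0$.} By \eqref{eq:flat-deletion}, restoring the $z$ missing cross-edges raises $\lam$ by $(2+o(1))z/n$. It also adds clique copies. In the sublinear regime every missing cross-edge meets internal edges only through endpoints of internal degree $O(q)$, so Lemma~\ref{lem:standard-completions} bounds the cliques gained per restored edge by $O(qn^{r-2})$, which is $o(n^{r-1})$.

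To return below the budget, I would delete $O(zq/n+1)$ internal edges, or more precisely strip the lowest-degree leaf edges. One leaf edge costs about $2/n$ in spectral radius, since its contribution to the degree-square term is small when $q=o(n)$. Because the clique bound is only violated by $o(c_r(n))$ per restored edge, removing one internal edge per roughly $n/q$ restorations suffices. For $z\ge1$ the net spectral gain is then positive, contradicting extremality. This step needs care, and I would instead phrase it via the budget: the sharp inequality from \eqref{eq:upper-p} shows $p-q\le -1$ whenever $z\ge1$ forces fewer completions. With that, the spectral expansion \eqref{eq:coarse} combined with \eqref{eq:flat-deletion} shows $\lam(G)$ falls short of the candidate by at least $2/n\cdot(1+o(1))$, which is a contradiction. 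So $z=0$.

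\textbf{Determining $p$ and the base.} With $z=0$, $G=K+H$ and all the quantities are bounded. I would use the exact count
\[
N_{K_{r+1}}(G)\ge \sum_i p_i\prod_{j\ne i}n_j ,
\]
together with the strict inequality $<qc_r(n)$. Write the parts as $n_i=m+\epsilon_i$ with $\epsilon_i$ bounded. Then $\prod_{j\ne i}n_j=c_r(n)\bigl(1+(\text{deviation})/m+\cdots\bigr)$. When $p\ge q+1$, the count is at least $qc_r(n)$ unless the parts are unbalanced by $\Omega(1)$, and by \eqref{eq:fine} such an imbalance costs $\al S_2/n\ge 2\al/n$ more than adding an edge gains. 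I expect this to yield a contradiction after the comparison.

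The case $p\le q-2$ loses $4/n$ against $E_{n,r,q}$ and is excluded. The case $p=q-1$ is excluded as well, except in the transition regimes, but the lemma as stated only claims $p=q$; so I would need the comparison with $E_{n,r,q}$ (or with $U$) to beat the Turán base with a $(q-1)$-star. That comparison follows from \eqref{eq:fine}: the $2/n$ gain from the extra edge exceeds the $O(1/n)$ loss of $S_2=2$ once $2>2\al$, which always holds.

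So $p=q$. The strict count then forces the base to differ from the Turán partition, namely $\sum_i p_i\prod_{j\ne i}n_j<qc_r(n)$. Among bounded-deviation partitions, the one minimising $S_2$ subject to this inequality is exactly $B^\pm$ with $S_2$ exceeding the Turán value by $2$; any other choice has a larger $S_2$ and loses at least $2\al/n$ further in \eqref{eq:fine}, which the $a_ip_i$ and degree-square terms cannot compensate. Hence the base is $B^-_{n,r}$ or $B^+_{n,r}$.

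\textbf{Expected obstacle.} The main difficulty is the $z=0$ step, because the spectral gain and the clique cost must be balanced at the precise $1/n$ scale.
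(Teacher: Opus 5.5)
Your outline follows the paper's: enter via Corollary~\ref{cor:global-entry} and Proposition~\ref{thm:reduction}, then compare exactly with the strict star graph $W$. However, two steps fail as written.

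\textbf{The $z=0$ step.} Restoring one missing cross-edge gains about $2/n$. Deleting one internal edge to get back under the budget costs at least about $2/n$. So for $z=1$ your net gain is $o(1/n)$, with no determined sign. Your fallback claim, that \eqref{eq:upper-p} forces $p\le q-1$ whenever $z\ge1$, is false. A missing cross-edge destroys only $O(qn^{r-2})=o(c_r(n))$ standard completions, so the count gives only $p\le q$. Indeed, $p=q$ with $z\ge1$ is compatible with the strict bound: delete a cross-edge from $E_{n,r,q}$. In addition, \eqref{eq:coarse} carries an error $O((A+1)/n)$, which is the same order as the margin you want to detect. At this scale you must use \eqref{eq:fine}; its error $O((p^2+p)/n^3)$ is $o(1/n)$ because $A=O(1)$ and $p=o(n)$.

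\textbf{The exclusion of $p\ge q+1$.} Your spectral inequality goes the wrong way. One extra internal edge gains $2/n$, which exceeds the $2\al/n$ loss from a defect-two base, so no spectral comparison can rule this case out. It is also unnecessary. With bounded part deviations and $z=O(1)$, Lemma~\ref{lem:standard-completions} gives $N_{K_{r+1}}(G)\ge pc_r(n)-O_r(pn^{r-2})$. Since $p=o(n)$, this exceeds $qc_r(n)$ as soon as $p\ge q+1$, and bounded imbalance cannot rescue the count.

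\textbf{The repair} is to reorder the argument.
\begin{itemize}
\item Get $p\le q$ from the count alone, and only then make a single comparison with $W$.
\item Combine \eqref{eq:fine}, the bound $\sum_v d_H(v)^2\le p^2+p\le q^2+q$, and the deletion cost $2z/n+O((p+1)n^{-2})$ from Lemma~\ref{lem:perron-coordinates}. This gives $\lam(W)-\lam(G)\ge[\al(D-2)+2(q-p+z)]/n-o(1/n)$, where $D\ge0$ is the even squared part-size defect of the base. The degree-square bound is needed because for $q\gg\sqrt n$ that term is not $o(1/n)$.
\item Since $D\ge0$, the only slack $G$ can have over $W$ is $2\al/n<2/n$. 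So $p\le q-1$ or $z\ge1$ each give leading coefficient at least $2-2\al>0$. This yields $p=q$ and $z=0$ together, with no counting argument about $z$.
\item The count then excludes $D=0$, and the same inequality excludes $D\ge4$.
\item Defect exactly two must still be shown to mean a single move $(a,a)\to(a+1,a-1)$ from the Tur\'an vector, i.e.\ $B^-_{n,r}$ or $B^+_{n,r}$. You assert this without proof.
\end{itemize}
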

\begin{proof}
For every value of $s$, there is a $q$-edge star extension $W$ on an almost balanced base that satisfies the strict inequality. Use the graph $E_{n,r,q}$ defined in Section~\ref{sec:results}, and use $U_{m,q}$ when $r=3$ and $s=2$. Its base has squared part-size defect two. Formula \eqref{eq:fine} gives $\lam(W)=\alpha n+f_r(q/n)-O_r(n^{-1})$.
Thus Corollary~\ref{cor:global-entry} applies to an extremal graph. Proposition~\ref{thm:reduction} then gives bounded part-size differences, $z=O(1)$, and $p=q+O(1)$. Its standard completions satisfy $N_{K_{r+1}}(G)\ge p c_r(n)-O_r(pn^{r-2})$.
Since $p=o(n)$, the integer inequality $p\ge q+1$ is impossible. Hence $p\le q$.
Let $D$ be the difference between the squared part-size sum of the base and that of $T_{n,r}$. It is a nonnegative even integer. Comparison with $W$ by \eqref{eq:fine}, using $\sum_vd_H(v)^2\le p^2+p\le q^2+q$, gives
\begin{equation}\label{eq:strict-coarse}
 \lam(W)-\lam(G)\ge
 \frac{\alpha(D-2)+2(q-p+z)}n
 -O_r\left(\frac{q+1}{n^2}+\frac{q^2+q}{n^3}\right).
\end{equation}
Lemma~\ref{lem:perron-coordinates}, applied while the missing-edge weights are restored, gives a deletion cost of
$2z/n+O_r((p+1)n^{-2})$. This justifies the deletion term in \eqref{eq:strict-coarse}; since $p=o(n)$, its error is $o(n^{-1})$.
If $p\le q-1$ or $z\ge1$, the leading coefficient is at least $2-2\alpha>0$, contradicting maximality. Thus $p=q$ and $z=0$. A balanced base would already produce $q c_r(n)$ standard cliques, and so $D\ne0$. Values $D\ge4$ also contradict \eqref{eq:strict-coarse}. Therefore $D=2$.

Balancing two integer parts $a\ge b+2$ reduces their squared sum by $2(a-b-1)\ge2$. Repeated balancing reaches the Tur\'an vector. Total defect two therefore permits just one move, with $a-b=2$. Reversing it transfers a vertex between two equal Tur\'an parts. These are exactly the bases $B^-$ and $B^+$.
\end{proof}

We will use a base comparison without an error term. Put $g_x(t)=t/(x+t)$, and let $F_0^+$ and $F_0^-$ denote the base responses of $B^+_{n,r}$ and $B^-_{n,r}$, respectively. Whenever both almost balanced bases exist,
\begin{equation}\label{eq:third-difference}
 \begin{aligned}
 F^+_0(x)-F^-_0(x)
 &=g_x(m+2)-3g_x(m+1)+3g_x(m)-g_x(m-1)\\
 &=\frac{6x}{(x+m-1)(x+m)(x+m+1)(x+m+2)}>0.
 \end{aligned}
\end{equation}
The identity follows after putting the four fractions over a common denominator. At either base root, $x=\alpha n+O(n^{-1})$ and the base derivative is $-1/n+O(n^{-3})$. Hence,
\begin{equation}\label{eq:base-third-gap}
 \lam(B^+)-\lam(B^-)=\frac{6\alpha}{n^2}+O_r(n^{-3}).
\end{equation}
If the same internal graph is inserted in a part of the same size on each base, the changes in the response are equal. Then \eqref{eq:third-difference} also gives a strict, exact comparison of the two joined graphs.

\subsection*{The first almost balanced base}
Assume first that $s=0$. In $B^-$ let $p_+,p_-,p_0$ count the internal edges in parts of sizes $m+1,m-1,m$, respectively. The corresponding standard-completion counts divided by $c_r(n)$ are $1-1/m$, $1+1/m$, and $1-1/m^2$. Thus the strict inequality implies $m(p_--p_+)<p_0\le q=o(m)$,
and integrality gives $p_-\le p_+$. The term involving the part sizes in \eqref{eq:fine} satisfies $\sum_i a_i p_i=p_+-p_-\ge0$. A star in an $m$-part attains zero and has squared-degree sum $q^2+q$.

For $q\ge4$, every internal graph other than one star loses at least $2(q-3)$ in its squared-degree sum, by Lemma~\ref{lem:walks}. This loss dominates the error $O_r((q^2+q)n^{-3})$ in \eqref{eq:fine}. The same conclusion holds for $q=2$, because two disjoint edges have smaller squared-degree sum. For $q=3$, the only additional graph with squared-degree sum twelve is a triangle. A triangle in an $m$-part does not satisfy the strict inequality: standard completions leave a deficit only $O_r(n^{r-3})$, whereas the triangle with one vertex from each of $r-2$ other parts creates $\Theta_r(n^{r-2})$ additional cliques. A triangle or a star entirely in the $(m+1)$-part has a smaller spectral radius because of the part-size term; a star in the $(m-1)$-part does not satisfy the strict inequality. For $q=1$, the part sizes alone give the assertion. Thus the prescribed star in an $m$-part is the unique extremal graph on $B^-$.

If $s>0$, let $p_L,p_M,p_D$ count edges in the parts of sizes $m+1,m,m-1$. The standard-completion ratios give the necessary inequality
\[
 -\frac{p_L}{m^2}
 +p_M\left(\frac1m-\frac1{m^2}-\frac1{m^3}\right)
 +p_D\left(\frac2m+\frac1{m^2}\right)<0.
\]
Since $q=o(m)$, the presence of even one edge in an $m$- or $(m-1)$-part makes the left side positive for large $m$. Hence all edges lie in $(m+1)$-parts. Their weights coincide, so the same squared-degree comparison gives one star. Again a triangle in one such part does not satisfy the strict inequality: its additional clique count has order $n^{r-2}$, larger than the available standard-completion deficit of order $n^{r-3}$. This also proves the cases $q=1,2,3$.

\subsection*{The second almost balanced base}
Write $p_A,p_L,p_M$ for internal edges in parts of sizes $m+2,m+1,m$. The completion ratios are $1-1/(m+1)$, $1-1/(m+1)^2$, and $1+1/(m+1)$. Hence $(m+1)(p_M-p_A)<p_L\le q=o(m)$,
so $p_M\le p_A$. Relative to putting every edge in an $(m+1)$-part, the extra part-size term is $p_A-p_M\ge0$. The star in an $(m+1)$-part therefore maximizes both terms in \eqref{eq:fine}. The cases $q\le3$ are handled as above: a triangle in that part violates the count bound, a star or triangle in the $(m+2)$-part has a strictly smaller part-size term, and a star in an $m$-part does not satisfy the strict inequality. Thus the prescribed star is uniquely optimal on $B^+$.
If $B^-$ also exists, its optimal star has the same part size. Equation \eqref{eq:third-difference} makes $B^+$ strictly better, for every $q$ under discussion.

\subsection*{The remaining part-size case}
There is no $(m+1)$-part on $B^+$ in this case. Let $k$ edges lie in its $m$-parts. The strict count inequality gives $k<q/2$. The other $q-k$ edges lie in the $(m+2)$-part, and $\sum_v d_H(v)^2\le(q-k)^2+(q-k)+k^2+k=q^2+q-2k(q-k)$.
Suppose $r\ge4$, so $B^-$ also exists. Its optimal star is in an $(m+1)$-part. By \eqref{eq:fine} and \eqref{eq:base-third-gap},
\begin{equation}\label{eq:s2-allocation}
 \begin{split}
 n^2\bigl(\lam(B^++H)-\lam(B^-+K_{1,q})\bigr)
 \le 6\alpha-4q+8k-\frac{2k(q-k)}\alpha
 +O_r\left(\frac{q^2+q+1}{n}\right).
 \end{split}
\end{equation}
For $q\ge2$ the main expression is at most $-(q-1)$. Indeed it is at most $6-4q+8k-2k(q-k)$, a convex quadratic in $k$. For $q=2$ only $k=0$ is possible. For $q\ge3$, its values at $k=0$ and $k=(q-1)/2$ are $6-4q$ and $(5-q^2)/2$, both at most $-(q-1)$. Since $q=o(n)$, the error in \eqref{eq:s2-allocation} is smaller than this loss. The star on $B^-$ is therefore uniquely optimal for every $q\ge2$.

For $q=1$ only one edge in the $(m+2)$-part of $B^+$ satisfies the bound. Comparison with the edge in an $(m+1)$-part of $B^-$ is exact to second order:
\[
 \lam(B^++K_2)-\lam(B^-+K_2)
 =\frac{6\alpha-4}{n^2}+O_r(n^{-3})
 =\frac{2(r-3)}{rn^2}+O_r(n^{-3})>0.
\]
This proves the different one-edge choice and \eqref{eq:one-edge-gap}.

When $r=3$, only $B^+=K_3(m+2,m,m)$ exists. If $q=1,2$, the inequality $k<q/2$ forces $k=0$ and the best internal graph is the star. If $q=3$, either $k=0$ or $k=1$. For $k=0$, the squared-degree sum is at most twelve, with equality for a star or a triangle. For $k=1$, it is at most eight, with equality only for a $P_3$ in the largest part and one edge in a smaller part. Relative to the three-edge star, this allocation improves the part-size term by $8/n^2$ and loses $4/(\alpha n^2)=6/n^2$ in the squared-degree term. Hence $\lam(M_m)-\lam(U_{m,3})=\frac2{n^2}+O(n^{-3})$.
A triangle in the largest part has the same first two terms in \eqref{eq:fine} as $U_{m,3}$, proving the second comparison in \eqref{eq:mixed-gap}. Every other allocation with $k=1$ has strictly smaller squared-degree sum than $M_m$. Together with \eqref{eq:exception-counts}, this proves that $M_m$ is the unique extremal graph at $q=3$.

Finally let $r=3$, $s=2$, and $q\ge4$. If $k\ge1$, comparison with the star gives
\[
 n^2\bigl(\lam(U_{m,q})-\lam(B^++H)\bigr)
 \ge k\bigl(3(q-k)-8\bigr)-O((q^2+q)/n).
\]
For $4\le q\le15$, the integer condition $k<q/2$ gives $q-k\ge3$, making the main term positive. For $q\ge16$ it is at least $kq$, which dominates the error when $q=o(n)$. If $k=0$ but $H$ is not a star, its squared-degree loss of at least $2(q-3)$ again dominates the error. Thus $U$ is the unique extremal graph in this range for $q\ge4$.

We have proved Theorem~\ref{thm:main} when $q/m\to0$, including every fixed value of $q$.

\subsection{The linear case and the endpoint}\label{sec:terminal}
Put $\alpha=(r-1)/r$ and $\kappa=0$ if $s=0$, and $\kappa=1$ otherwise. We compare every graph with $T^*_{n,r,q}$. This graph has exactly $q c_r(n)$ copies, so it does not satisfy the strict inequality.

\subsection*{The interior range}
Consider a sequence with $q/m\to\gamma\in(0,1)$ and spectral radius at least that of the stated graph. The stated graph has spectral radius $\alpha n+f_r(q/n)+O_r(n^{-1})$. Corollary~\ref{cor:global-entry} and Proposition~\ref{thm:reduction} give bounded part deviations, bounded missing edges, and a vertex $v$ incident with all but $k=O(1)$ internal edges. Relabel the parts so that $v$ lies in the first part, and write
\[
 n_i=m+t_i,\quad \sum_i t_i=s,\quad t=t_1,\quad
 p=q+\ell,\quad D_2=\sum_i t_i^2-s,\quad d=t-\kappa.
\]
Let $z_h$ missing edges meet $v$, and let $z_o$ missing edges avoid it. Put $e=\ell-z_h$. After passing to a subsequence, we may assume that all these integers are fixed. Define
\[
 a_\gamma=2+\frac{2\gamma}{r-1},\qquad
 b_\gamma=\frac{2\gamma}{r-1},\qquad
 w_\gamma=\frac{4\gamma}{r}+\frac{2\gamma^2}{r(r-1)}.
\]

\begin{lemma}\label{lem:integer}
In this notation,
\begin{align}
 e&\le\gamma d-(1-\gamma)z_h,\label{eq:integer-budget}\\
 n\bigl(\lam(G)-\lam(T^*_{n,r,q})\bigr)
 &=-\alpha D_2+a_\gamma e-w_\gamma d-b_\gamma k-2z_o+o(1).
 \label{eq:integer}
\end{align}
\end{lemma}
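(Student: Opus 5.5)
We prove the two displays separately: \eqref{eq:integer-budget} by a refined clique count, and \eqref{eq:integer} by comparing both graphs with their restored graphs via Lemma~\ref{lem:deletion}.

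\textbf{The budget \eqref{eq:integer-budget}.} We count copies of $K_{r+1}$ from below, as in Lemma~\ref{lem:standard-completions}.

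The $p-k$ edges at $v$ have standard completions in the other parts, which have sizes $m+t_i$ for $i\ne1$. Their total is $(p-k)\prod_{i\ne1}(m+t_i)$. Since $\sum_{i\ne1}t_i=s-t$, this product equals $m^{r-1}+(s-t)m^{r-2}+O(n^{r-3})$.

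Each missing edge at $v$ destroys at most $(p-k)m^{r-2}+O(n^{r-2})$ completions, that is, $\gamma_n m^{r-1}+O(n^{r-2})$ with $\gamma_n=q/m$. Each missing edge avoiding $v$ destroys only $O((k+1)n^{r-2})$.

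The $k$ remaining edges contribute $k(m^{r-1}+O(n^{r-2}))$. We write $c_r(n)=m^{r-1}+\max\{s-1,0\}m^{r-2}$. Comparing with $qc_r(n)$ and dividing by $m^{r-2}$ gives
\[
 \ell m+q(s-t)-\gamma_n m z_h\le q(s-\kappa)+O(1).
\]
Here the $k$ edges cancel against $-k$ to leading order. Dividing by $m$ gives
\[
 \ell-\gamma z_h\le\gamma(t-\kappa)+o(1),
\]
that is, $e\le\gamma d-(1-\gamma)z_h+o(1)$, with $e=\ell-z_h$. All quantities are fixed integers and $\gamma\in(0,1)$. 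The $o(1)$ can be dropped once we pass to a subsequence on which $q/m$ is fixed to within $o(1/m)$ of $\gamma$ times constants, so that the inequality between fixed numbers holds in the limit.

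\textbf{The spectral expansion \eqref{eq:integer}.} Write $\overline G=K+H$. First apply \eqref{eq:fine} to $\overline G$ and to $T^*_{n,r,q}$, whose bases both have bounded deviations.

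\emph{Base term.} The base term contributes $-\alpha(\sum_it_i^2-s)/n=-\alpha D_2/n$, because $\lambda(K)$ and $\lambda(T_{n,r})$ differ by $\alpha\Delta S_2/n+O(n^{-2})$.

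\emph{Edge-count term.} The term $2p/n$ gives $2\ell/n$.

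\emph{Part-size term.} The term $-4\sum_ia_ip_i/n^2$ gives
\[
 -\frac{4(t-\kappa)(q-k)}{n^2}=-\frac{4\gamma d}{rn}+o(1/n),
\]
since $a_1-a_{\mathrm{Tur}}=t-\kappa$.

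\emph{Degree-square term.} The term $\sum d_H^2/(\alpha n^2)$ changes from $q^2$ to $(p-k)^2+O(k)$. This is $2q(\ell-k)/(\alpha n^2)$, which equals $\frac{2\gamma}{r-1}(\ell-k)/n$.

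\emph{Missing cross-edges.} Then apply \eqref{eq:hub-deletion}, which subtracts $(a_\gamma z_h+2z_o)/n$.

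\emph{Collecting terms.} Collecting gives $a_\gamma\ell-a_\gamma z_h=a_\gamma e$, together with $-b_\gamma k$ and $-2z_o$.

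\emph{The coefficient of $d$.} The coefficient of $d$ needs more care, because $f_r(p/n)$ depends on the base through $\rho$. The $-4\gamma d/r$ above combines with the effect of shifting the hub part size on the star response. Expanding the star response \eqref{eq:star-response} in a part of size $m+t$ to first order in $t$ gives the extra $-2\gamma^2/(r(r-1))$. Together these produce $w_\gamma$.

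\textbf{Main obstacle.} The main obstacle is this last bookkeeping. Formula \eqref{eq:fine} carries error $O(p^2/n^3)=O(1/n)$, which is not $o(1/n)$ when $p$ is linear. So I would instead solve the response equation \eqref{eq:response} exactly, with the star-plus-$k$-edges responses, and linearize in the bounded integer parameters around $T^*_{n,r,q}$. Terms from the $k$ non-hub edges and from $M_2=pk+O(1)$ give the $-b_\gamma k$ through Theorem~\ref{thm:concentration}.
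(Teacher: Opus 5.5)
Your derivation of \eqref{eq:integer-budget} is the paper's standard-completion count and is fine. One correction: dropping the $o(1)$ does not need $q/m=\gamma+o(1/m)$, which a subsequence cannot force in general. You have $\ell-\gamma_nz_h-\gamma_nd\le O(1/m)$ with fixed integers $\ell,z_h,d$ and $\gamma_n=q/m\to\gamma$, so letting $n\to\infty$ gives the exact inequality.

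\textbf{The gap is in \eqref{eq:integer}.} Everything you actually compute comes from \eqref{eq:fine}. Its remainder $O((p^2+p)/n^3)$ is of order $1/n$ when $p$ is linear, which is the same order as every coefficient you are trying to identify. In particular, the part $-2\gamma^2d/(r(r-1))$ of $-w_\gamma d$ is precisely the part-size correction to the degree-square term that \eqref{eq:fine} puts into its remainder. So ``together these produce $w_\gamma$'' is an assertion, not a derivation. The exact linearization you announce at the end is the real content of the lemma, and it has to be carried out. Concretely, and this is what the paper does: in \eqref{eq:response} the star part contributes $u/(x+u)+\Psi(x,u,p)$, where
\[
 \Psi=\frac{d_*}{(1+u/x)(1+u/x+d_*)},\qquad d_*=\frac{p(2x+p+1)}{x(x^2-p)}.
\]
At $x=\alpha n+O(1)$, $u=n/r+O(1)$ and $p=q+O(1)$ one gets
\[
 \Psi_p=\frac{f_r'(q/n)}{n^2}+O(n^{-3}),\qquad \Psi_u=-\frac{2f_r(q/n)}{n^2}+O(n^{-3}),
\]
with $f_r'(q/n)\to a_\gamma$ and $2f_r(q/n)\to w_\gamma$. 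Add the base difference $-\alpha D_2/n^2+O(n^{-3})$ relative to $T_{n,r}$. Then integrate over the bounded change $(u,p)\colon(m+\kappa,q)\to(m+t,q+\ell)$ and divide by the response derivative $-1/n+O(n^{-2})$. This gives $n(\lam_{\rm st}-\lam(T^*_{n,r,q}))=-\alpha D_2+a_\gamma\ell-w_\gamma d+o(1)$ for the $p$-edge star on the base of $G$.

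\textbf{A hypothesis problem with the $k$ non-hub edges.} Theorem~\ref{thm:concentration} covers an internal graph lying in one part, or in several parts of a \emph{common} size. Your $k$ edges may lie in parts whose offsets $t_i$ differ from $t$ and from each other. You would first have to move them into the hub part. Each such move costs only $O(n^{-2})$, because the non-hub coordinates in \eqref{eq:hub-coordinates} are $(1+O(n^{-1}))/\sqrt n$, but this step must be stated. The paper avoids Theorem~\ref{thm:concentration} altogether. It moves each non-hub edge directly to $v$ along a weight interpolation, using $d\lam=2x_ax_b$ and \eqref{eq:hub-coordinates}, which gains $b_\gamma/n+o(n^{-1})$ per move. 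It restores the missing edges the same way; your use of \eqref{eq:hub-deletion} for that step is equivalent and correct.
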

\begin{proof}
The $p-k$ internal edges at $v$ give their standard completions by choosing a neighbor of $v$ in every other part. The remaining $k$ internal edges give $k m^{r-1}+O_r(n^{r-2})$ further standard cliques. Missing edges away from $v$ destroy only $O_r(n^{r-2})$ members of these families. Since $c_r(n)=m^{r-1}+(s-\kappa)m^{r-2}+O_r(n^{r-3})$,
we obtain
\[
 N_{K_{r+1}}(G)-q c_r(n)
 \ge\bigl(\ell+(q/m)(\kappa-t-z_h)\bigr)m^{r-1}-O_r(n^{r-2}).
\]
All uncounted cliques have nonnegative contribution. Division by $m^{r-1}$ and passage to the limit prove \eqref{eq:integer-budget}.

A $p$-edge star fits in the first part because $\gamma<1$. Move the remaining $k$ edges to $v$, and then add back the missing edges. All part deviations and all edit counts other than the star are bounded. Lemma~\ref{lem:perron-coordinates} therefore gives, uniformly during these changes,
\[
 x_u=\frac{1+O_r(n^{-1})}{\sqrt n}\ (u\ne v),\qquad
 x_v=\frac{1+\gamma_n/(r-1)+O_r(n^{-1})}{\sqrt n},\quad \gamma_n=q/m.
\]
Along a change of edge weights the simple Perron root satisfies $d\lam/dt=2x_a(t)x_b(t)$ for an edge $ab$. Replacing one edge avoiding $v$ by one meeting $v$ increases the root by $b_\gamma/n+o(n^{-1})$. Restoring a missing edge meeting $v$ increases it by $a_\gamma/n+o(n^{-1})$, and restoring a missing edge avoiding $v$ increases it by $2/n+o(n^{-1})$. Summing over the bounded changes gives the two correction terms used below.

We next compare the stars directly. Write the star change in the response as
\[
 d_*(x,p)=\frac{p(2x+p+1)}{x(x^2-p)},\qquad
 \Psi(x,u,p)=\frac{d_*(x,p)}{(1+u/x)(1+u/x+d_*(x,p))}.
\]
At $x=\alpha n+O(1)$, $u=n/r+O(1)$ and $p=q+O(1)$,
\[
 \Psi_p=\frac{f_r'(q/n)}{n^2}+O_r(n^{-3}),\qquad
 \Psi_u=-\frac{2f_r(q/n)}{n^2}+O_r(n^{-3}).
\]
The base-response difference from $T_{n,r}$ is $-\alpha D_2/n^2+O_r(n^{-3})$. Integrating over the bounded changes of $p,u$, and using response derivative $-1/n+O_r(n^{-2})$, gives the star comparison
\[
 n(\lam_{\rm st}-\lam(T^*_{n,r,q}))
 =-\alpha D_2+a_\gamma\ell-w_\gamma d+o(1).
\]
Subtracting these two changes and using $e=\ell-z_h$ proves \eqref{eq:integer}.
\end{proof}

\begin{lemma}\label{lem:integer-defect}
The integer $D_2$ is nonnegative and even, with equality exactly at a Tur\'an vector. For $d>0$ one has $D_2\ge d(d+1)$. If $d=-u<0$, then $D_2\ge u(u+1)$ for $s=0$, and $D_2\ge u(u-1)$ for $s>0$.
\end{lemma}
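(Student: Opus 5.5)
The plan is to rewrite $D_2$ as a sum of nonnegative even terms, one per part, and then read off each claimed bound from the first part alone, using the other parts only in one case.

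First, since $\sum_i t_i=s$,
\[
 D_2=\sum_i t_i^2-\sum_i t_i=\sum_i t_i(t_i-1).
\]
Each $t_i$ is an integer, so each summand $t_i(t_i-1)$ is a product of two consecutive integers. It is therefore nonnegative and even, and it vanishes exactly when $t_i\in\{0,1\}$. Hence $D_2\ge0$ and $D_2$ is even. Equality $D_2=0$ means every $t_i\in\{0,1\}$ with $\sum_i t_i=s$, that is, exactly $s$ parts of size $m+1$ and the rest of size $m$: a Tur\'an vector. Conversely, a Tur\'an vector gives $D_2=0$.

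For the bounds, write $t=t_1=d+\kappa$ and keep only the summand $t_1(t_1-1)$, together with whatever the other parts force.

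If $d>0$ and $s>0$, then $\kappa=1$ and $t_1=d+1$, so $D_2\ge(d+1)d$ directly.

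If $d>0$ and $s=0$, then $t_1=d$, and the first summand gives only $d(d-1)$. Here the remaining parts satisfy $\sum_{i\ge2}t_i=-d$. For every integer $t_i$ one has $t_i^2+t_i\ge0$, i.e.\ $t_i(t_i-1)\ge-2t_i$. Summing over $i\ge2$ gives a further contribution of at least $2d$, and the total is $d(d-1)+2d=d(d+1)$.

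If $d=-u<0$ and $s=0$, then $t_1=-u$, and the first summand alone gives $(-u)(-u-1)=u(u+1)$.

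If $d=-u<0$ and $s>0$, then $t_1=1-u$, and the first summand gives $(1-u)(-u)=u(u-1)$.

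The only step needing more than the first part is $s=0$, $d>0$, where the compensating negative deviations in the other parts supply the missing $2d$. I expect this to be the one place to check carefully; the other cases are direct substitutions.
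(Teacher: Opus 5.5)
Your proof is correct and follows the paper's argument. The identity $D_2=\sum_i t_i(t_i-1)$ gives the first assertion and the $s>0$ bounds from the single summand at $t_1=d+\kappa$. For $s=0$ the paper bounds $D_2=\sum_i t_i^2\ge t^2+|t|$ in one step, using that the other offsets sum to $-t$; your termwise inequality $t_i(t_i-1)\ge-2t_i$ is the same estimate $\sum_{i\ge2}t_i^2\ge d$, and for $d<0$ you rightly observe that the first summand alone already suffices.
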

\begin{proof}
The identity $D_2=\sum_i t_i(t_i-1)$ proves the first assertion. If $s>0$, the term belonging to $t=d+1$ gives $d(d+1)$ for positive $d$, and the term belonging to $t=1-u$ gives $u(u-1)$ for negative $d$. If $s=0$, the other offsets sum to $-t$, so their squares sum to at least $|t|$. Hence $D_2\ge t^2+|t|$.
\end{proof}

First assume that $(r,s)\ne(3,2)$. Since $q\to\infty$, the exception with one edge does not occur. The stated graph is a $q$-edge star in a part of size $m+\kappa$ on an almost balanced base. For this graph, the right side of \eqref{eq:integer} is $-2\alpha+o(1)$. Hence every graph with at least the same spectral radius must have limiting value at least $-2\alpha$.
Set $c_\gamma=a_\gamma\gamma-w_\gamma=2\gamma(r-2+\gamma)/r$. Then $0<c_\gamma<2\alpha$.
If $d\ge2$, \eqref{eq:integer-budget} and Lemma~\ref{lem:integer-defect} bound the right side above by
$-\alpha d(d+1)+c_\gamma d<-\alpha d(d-1)\le-2\alpha$.
If $d=1$, integrality in \eqref{eq:integer-budget} gives $e\le0$, and the right side is at most $-2\alpha-w_\gamma<-2\alpha$.
If $s=0$ and $d\le-1$, or if $s>0$ and $d\le-2$, the negative-$d$ bounds give the same strict conclusion.
The remaining negative possibility is $s>0$, $d=-1$. Here $e\le-1$, so the right side is at most $-a_\gamma+w_\gamma$. This is strictly less than $-2\alpha$, since
\[
 a_\gamma-w_\gamma-2\alpha
 =\frac{2(1-\gamma)(r-1+\gamma)}{r(r-1)}>0.
\]
It follows that $d=0$.

If $z_h\ge1$, \eqref{eq:integer-budget} forces $e\le-1$; this, or $e\le-1$ directly, gives a value at most $-a_\gamma<-2\alpha$. Thus $e=z_h=0$. A missing edge away from $v$ contributes $-2$ to the right side and is also excluded. Hence $z=0$ and $p=q$. A balanced base does not satisfy the strict inequality, and a defect at least four gives a value below $-2\alpha$. Therefore $D_2=2$, and the equality then forces $k=0$.
We have a single star on an almost balanced base, in a part of size $m+\kappa$. The definition of the bases leaves exactly the graphs stated in the theorem. When both are possible, \eqref{eq:third-difference} shows that $B^+$ has larger spectral radius. This proves the result when $0<\gamma<1$ and $(r,s)\ne(3,2)$.

\subsection*{A tripartite exception}
Let $r=3$, $s=2$, and $q/m\to\gamma\in(0,1)$. Write the three offsets as $t,a,b$, where $t+a+b=2$, and put $A_\gamma=(4\gamma+\gamma^2)/3$ and $U_\gamma=(2-5\gamma-2\gamma^2)/3$.
Subtracting the expression for $V$ in \eqref{eq:integer} gives
\[
 n(\lam(G)-\lam(V))=P_\gamma+o(1),\qquad
 P_\gamma=-\frac23D_2+(2+\gamma)(e+1)-A_\gamma t-\gamma k-2z_o.
\]
The count inequality gives $e\le\gamma(t-1)-(1-\gamma)z_h$. Also
$n(\lam(U)-\lam(V))=U_\gamma+o(1)$.
Thus every possible extremal graph must satisfy $P_\gamma\ge\max\{0,U_\gamma\}$.

If $t\le-1$, then $e\le-1$ and $D_2\ge3t^2/2-2t$. Hence
$P_\gamma\le-t^2+(4/3-A_\gamma)t\le-2/3$.
If $t=0$, then $e\le-1$ and $D_2\ge0$; nonnegativity requires $a=b=1$, $e=-1$, and $k=z_o=0$. Deleting $v$ leaves $K_3(m-1,m+1,m+1)$ and $v$ has $m-q$ non-neighbors there. Lemma~\ref{lem:low-core}, proved below and valid also away from the endpoint, forces these non-neighbors into the $(m-1)$-part. The graph is $V$.

If $t=1$ and $\{a,b\}\ne\{0,1\}$, then $D_2\ge4$ and $e\le0$, so
$P_\gamma\le(-2-\gamma-\gamma^2)/3<0$.
For $\{a,b\}=\{0,1\}$, the base is balanced. Values $e\le-1$ give a negative right side. If $e=0$, \eqref{eq:integer-budget} and $\gamma<1$ force $z_h=0$. With $z_o=0$ the $q$ standard-completion families violate the strict inequality; with $z_o\ge1$ the right side is at most $\gamma-A_\gamma<0$.

If $t=2$, write $b=-a$. Then $e\le0$ and $P_\gamma\le U_\gamma-\frac{4a^2}{3}-\gamma k-2z_o$.
To have a large enough value, we must have $a=0$, $e=0$, $k=z_o=0$, and $U_\gamma\ge0$. Hence $\gamma\le(\sqrt{41}-5)/4<1/2$. The count inequality gives $z_h\le\gamma/(1-\gamma)<1$, so $z_h=0$ and the graph is $U$.
Finally, for $t\ge3$, using $D_2\ge3t^2/2-2t$ and $e\le\gamma(t-1)$ gives
\[
 P_\gamma\le-t^2+\frac{4t}{3}+2+
 \left(\frac{2t}{3}-1\right)(\gamma+\gamma^2)
 \le-t^2+\frac{8t}{3}\le-1.
\]
No other class remains.

\subsection*{A counting lemma for a balanced core}
At the endpoint we need the following counting lemma.

\begin{lemma}\label{lem:balanced-core-general}
Fix $r\ge3$ and $K\ge0$. Let
\[
 C=K_r\bigl((a+1)^{[j]},a^{[b]}\bigr),\qquad j+b=r,\quad b\ge1,
 \qquad P_0=(a+1)^j a^b.
\]
Add $k\le K$ internal edges, and then a vertex $v$ with exactly $h+k$ non-neighbors in the core, where $h\ge0$ is an integer with $h=o(a)$ and $q=a-h\ge1$. For sufficiently large $a$ the resulting graph satisfies $N_{K_{r+1}}(G)\ge q P_0/a$.
The conclusion is uniform along such sequences, including bounded $h$.
\end{lemma}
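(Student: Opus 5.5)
The plan is to count only two disjoint families of $K_{r+1}$'s and show that together they already exceed $qP_0/a$. Write $b_i\in\{a,a+1\}$ for the core part sizes and $w_i$ for the number of non-neighbors of $v$ in part $i$, so $\sum_iw_i=h+k$.

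\emph{Family 1: transversal cliques through $v$.} These are $v$ together with one neighbor of $v$ in each core part. They are unaffected by the internal edges, and their number is $\prod_i(b_i-w_i)=P_0\prod_i(1-w_i/b_i)$. The inequality \eqref{eq:product-bound} (equivalently $\prod(1-y_i)\ge1-\sum y_i$) gives
\[
\prod_i(b_i-w_i)\ge P_0-\sum_i w_i\frac{P_0}{b_i}\ge P_0-(h+k)\frac{P_0}{a}
=\frac{qP_0}{a}-\frac{kP_0}{a}.
\]
I will keep the sharper middle expression. A non-neighbor in an $(a+1)$-part costs only $P_0/(a+1)$, which is $P_0/(a(a+1))\approx a^{r-2}$ less than the crude bound.

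\emph{Family 2: standard completions of the core edges.} For each of the $k$ core edges I take its standard completions inside $C$; these avoid $v$ and are disjoint from each other and from Family 1 by Lemma~\ref{lem:standard-completions}. An edge in part $i$ contributes $P_0/b_i$. So an edge in an $a$-part exactly pays the $P_0/a$ it was charged. An edge in an $(a+1)$-part leaves a deficit of $P_0/(a(a+1))=(1+o(1))a^{r-2}$.

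\emph{Covering the deficit.} For an edge $xy$ in an $(a+1)$-part $V_i$, split by whether $v$ sees both endpoints. If $v\sim x,y$, I add the cliques $\{v,x,y\}$ plus one neighbor of $v$ in each of $r-2$ of the other $r-1$ parts. There are at least $(r-1)(a-h-k)^{r-2}=(r-1-o(1))a^{r-2}$ of them, and they are new because they contain $v$ and two vertices of one part. Since $r-1\ge2$, this beats the deficit for large $a$. If instead an endpoint, say $x$, is a non-neighbor of $v$, then $x$ is one of the $w_i$ non-neighbors in an $(a+1)$-part, and the sharper first-order term already saves exactly $P_0/(a(a+1))$.

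\emph{Main obstacle.} The difficulty is the charging when several deficient edges share one non-adjacent endpoint, e.g.\ a star of core edges centred at a non-neighbor $x$ of $v$. I would handle this with two further sources. First, I would retain the positive second-order term $\sum_{i<l}w_iw_lP_0/(b_ib_l)$ from the product expansion. Second, I would count cliques through $x$ and a neighbor $y_j$ where $v$ replaces the vertex chosen from some other part; this is available because $v$ is adjacent to the $y_j$ and misses only $o(a)$ core vertices, which gives $\Theta(a^{r-2})$ new cliques per edge. Since $k\le K$ is bounded, only finitely many configurations of the $k$ edges relative to the non-neighbors arise. I would run this case check uniformly, with all errors $O(K(h+K)a^{r-3})=o(a^{r-2})$. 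That uniformity gives the conclusion for all large $a$, including bounded $h$.
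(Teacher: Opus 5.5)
Your two families, the sharpened first-order term, and the extra cliques $\{v,x,y\}$ plus $r-2$ neighbours of $v$ (used when $v$ sees both ends of an edge in an $(a+1)$-part) are exactly the paper's ingredients. They settle the cases $w_H\ge k_H$ and $w_H=0$. Here $w_H$ is the number of non-neighbours of $v$ in $(a+1)$-parts and $k_H$ is the number of core edges in those parts.

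The gap is in your ``main obstacle'' paragraph, in two places.

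\textbf{The second source of cliques does not exist.} A clique through $x$ cannot contain $v$, because $x$ is a non-neighbour of $v$.

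\textbf{An $o(a^{r-2})$ error cannot close the argument, because the lemma is exactly tight.} Take $h=0$, let the $k$ core edges form a star centred at $x$ in an $(a+1)$-part, and let $v$ miss $x$ and $k-1$ vertices of a single $a$-part. Every $K_{r+1}$ then lies in one of your two families. These contain exactly $P_0(a-k+1)/(a+1)$ and $kP_0/(a+1)$ cliques. Hence $N_{K_{r+1}}(G)=P_0=qP_0/a$ on the nose. Any argument that loses even $O(a^{r-3})$ fails in this configuration, and integrality does not rescue it.

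What is missing is the aggregate bookkeeping that makes the second-order term quantitatively sufficient, together with an exact treatment of its equality case. Put $c=P_0/a$ and $w_L=h+k-w_H$. Your families give at least $qc+(w_H-k_H)c/(a+1)$ cliques.

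Now suppose $w_H\ge1$ and $d=k_H-w_H\ge1$. The key inequality is
\[
w_L\ge k-w_H\ge k_H-w_H=d,
\]
which holds because $v$ has at least $k$ non-neighbours. Let $A_H$ and $A_L$ be the proportions of core transversals meeting a non-neighbour in a larger, respectively smaller, part. The overlap term $P_0A_HA_L$ is then at least $(1-o(1))\,w_Hw_L\,c/(a+1)$. This beats the deficit $dc/(a+1)$ unless $w_Hw_L=d$.

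That equality forces $w_H=1$, $w_L=d$, $h=0$ and $k=k_H$. In that case you must count exactly. The avoidance factor on the larger parts is exactly $a/(a+1)$, and on the smaller parts it is at least $1-(k-1)/a$. Together with the $kP_0/(a+1)$ core completions, this gives exactly $P_0=qc$.

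Your proposal mentions retaining the second-order term. However, it neither proves that the term is large enough (the inequality $w_L\ge d$) nor isolates and handles this zero-slack case. In the paper, the extra family through $v$ is needed only when $w_H=0$.
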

\begin{proof}
Put $c=P_0/a$. Let $w_H,w_L$ be the numbers of non-neighbors of $v$ in the larger and smaller core parts, so $w_H+w_L=h+k$. Let $k_H$ count internal edges in larger parts. Standard cliques entirely in the core number exactly $kc-\frac{k_Hc}{a+1}$.
These cliques are disjoint from the cliques formed by $v$ and one vertex from each core part. A union bound gives at least $qc+\frac{(w_H-k_H)c}{a+1}$
cliques of these two types together. Thus $w_H\ge k_H$ already proves the claim.

Suppose $w_H<k_H\le K$. If $w_H=0$, every internal edge in a larger part has both endpoints adjacent to $v$. Each such edge creates an additional family using $v$, that edge, and neighbors of $v$ from any $r-2$ other parts. These families are disjoint, and each has size at least $(r-1)(a-h-k)^{r-2}\ge c/(a+1)$
for large $a$. They are different from both types already counted. They compensate for the entire deficit $k_Hc/(a+1)$.

Assume next that $w_H\ge1$, and put $d=k_H-w_H\ge1$. Write $A_H,A_L$ for the proportions of the cliques with one vertex in each core part that meet at least one non-neighbor of $v$ in, respectively, a larger or a smaller part. Then
\[
 A_H\le\frac{w_H}{a+1},\quad A_L\le\frac{w_L}{a},\quad
 A_HA_L\ge\frac{w_Hw_L}{a(a+1)}(1-o(1)).
\]
For the last bound use $1-\prod_i(1-y_i)\ge\sum_i y_i-(\sum_i y_i)^2/2$, separately in the two groups. Here $w_H\le K$ and $w_L=o(a)$. The proportion avoiding all non-neighbors is $(1-A_H)(1-A_L)$. Keeping the overlap term improves the preceding lower bound to $N_{K_{r+1}}(G)\ge qc+ \frac{c}{a+1}\bigl(-d+w_Hw_L(1-o(1))\bigr)$.
We have $w_L=h+k-w_H\ge k-w_H\ge d$. If $w_Hw_L\ge d+1$, this proves the assertion for large $a$, since $d\le K$ is bounded. The only remaining possibility is $w_Hw_L=d$. It forces $w_H=1$, $w_L=d$, $h=0$, and $k=k_H=d+1$.

In that final case, the large-part avoidance factor is exactly $a/(a+1)$, and the small-part factor is at least $1-(k-1)/a$. The cliques through $v$ number at least $P_0(a-k+1)/(a+1)$. The standard core cliques number $kP_0/(a+1)$. Their sum is $P_0=qc$, because $h=0$. This proves the lemma in every case.
\end{proof}

\subsection*{Allowed neighborhoods on almost balanced cores}
Let $C_0=K_r((a+1)^{[j]},a^{[b]})$, where $j+b=r$ and $b\ge1$. Write $P_0=(a+1)^j a^b$ and $q=a-h$. Its two possible almost balanced cores are
\begin{align*}
 C_-&=K_r\bigl(a-1,a^{[b-2]},(a+1)^{[j+1]}\bigr) &&(b\ge2),\\
 C_+&=K_r\bigl(a+2,(a+1)^{[j-2]},a^{[b+1]}\bigr) &&(j\ge2).
\end{align*}
There are no internal edges or missing core cross-edges in this subsection. Add one vertex $v$ with exactly $h$ non-neighbors in the chosen core. We assume the strict inequality $N_{K_{r+1}}<qP_0/a$.

\begin{lemma}\label{lem:low-core}
Assume $0\le h\le a-2$. In $C_-$, no non-neighbor of $v$ may lie in an $(a+1)$-part. If there is no $a$-part, all $h$ non-neighbors lie in the $(a-1)$-part. If an $a$-part exists, the only possible distributions, up to a permutation of equal parts, are $X:(0,h),\qquad Y:(h-1,1),\qquad Z:(h,0)$,
where the entries count non-neighbors in the $(a-1)$-part and one $a$-part; all other parts have none. The distribution $Y$ requires $h\ge1$; $X=Y$ when $h=1$, and $X=Z$ when $h=0$. All these distributions satisfy the strict inequality.
\end{lemma}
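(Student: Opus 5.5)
The proof is an exact count. In the graph of the lemma the core $C_-$ has no internal edges and no missing cross-edges, so it is $r$-partite and contains no $K_{r+1}$. Every copy of $K_{r+1}$ therefore consists of $v$ together with one neighbor of $v$ in each core part. Let $w_i$ be the number of non-neighbors of $v$ in core part $i$, of size $n_i$, so that $\sum_iw_i=h$. Then
\[
 N_{K_{r+1}}(G)=\prod_i(n_i-w_i)=P_-\prod_i\Bigl(1-\frac{w_i}{n_i}\Bigr),\qquad
 P_-=(a-1)a^{b-2}(a+1)^{j+1}=P_0\Bigl(1-\frac1{a^2}\Bigr).
\]
The strict hypothesis $N_{K_{r+1}}<(a-h)P_0/a$ is thus equivalent to
\[
 (a^2-1)\prod_i\Bigl(1-\frac{w_i}{n_i}\Bigr)<a(a-h).
\]
This is a purely arithmetic condition, and I would settle it by cross-multiplying in each configuration. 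No asymptotics are needed, so the statement should hold for every $0\le h\le a-2$.

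\textbf{Excluding $(a+1)$-parts.} Suppose some $w_i\ge1$ with $n_i=a+1$. By log-concavity, for a fixed total the product $\prod_i(1-w_i/n_i)$ is smallest when the mass sits in the smallest parts, and moving a unit into a larger part increases it. So it suffices to check the extreme configuration: one non-neighbor in an $(a+1)$-part and the remaining $h-1$ in the $(a-1)$-part. Its product is $\frac{a}{a+1}\cdot\frac{a-h}{a-1}$. Multiplying by $a^2-1$ gives $a(a-h)$, which is not strictly less than $a(a-h)$. Hence no non-neighbor may lie in an $(a+1)$-part.

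When $b\le1$ there is no $a$-part, so all $h$ non-neighbors lie in the $(a-1)$-part. This configuration satisfies the strict inequality: it fails by exactly
\[
 (a^2-1)\Bigl(1-\frac h{a-1}\Bigr)=(a+1)(a-1-h)=a(a-h)-(h+1)<a(a-h).
\]

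\textbf{Two $a$-parts or two units in one.} Suppose now that $a$-parts exist. Let $x$ be the number of non-neighbors in the $(a-1)$-part, and $y_1,y_2,\ldots$ the numbers in the $a$-parts.

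First take a single $a$-part, so $x+y=h$. Expanding, the condition becomes
\[
 y(1+x)(a+1)<a(h+1)=a(x+y+1).
\]
\begin{itemize}
\item If $y=0$ (configuration $Z$), or $x=0$ (configuration $X$), the condition holds.
\item If $y=1$ (configuration $Y$), it reduces to $x+1<a$, which follows from $x\le h-1\le a-3$.
\item If $x\ge1$ and $y\ge2$, the left side is at least $2(x+1)(a+1)$, which exceeds $a(x+3)$. So these configurations violate the bound.
\end{itemize}

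Next take two $a$-parts, each with at least one non-neighbor. Again it suffices to treat the extremal configuration with one unit in each and $h-2$ in the $(a-1)$-part. Cross-multiplying, the excess is
\[
 (a-h+1)(a-1)^2(a+1)-a^3(a-h)=(h-2)a^2+ha-h+1.
\]
This is positive for $h\ge2$, so the configuration violates the bound. Configurations with more mass in $a$-parts are worse, by the same monotonicity.

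\textbf{Main obstacle.} The delicate step is making the monotonicity reductions rigorous: I must show that every configuration is dominated by one of the few extremal ones checked above. I would do this with a simple exchange inequality. Moving a unit from a part of size $n$ to a part of size $n'<n$ strictly lowers the product whenever the receiving part keeps a positive count. This follows from
\[
 \Bigl(1-\frac{w+1}{n'}\Bigr)\Bigl(1-\frac{w'-1}{n}\Bigr)<\Bigl(1-\frac w{n'}\Bigr)\Bigl(1-\frac{w'}n\Bigr),
\]
which holds whenever $(n-w'+1)<(n'-w)$ fails in the appropriate direction. With this exchange step, the explicit checks above cover all cases. The degenerate identifications $X=Y$ for $h=1$ and $X=Z$ for $h=0$ are then immediate from the definitions.
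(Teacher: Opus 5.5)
Your strategy is the paper's: the exact count $\prod_i(n_i-w_i)$, a reduction to a few extreme configurations, and the same arithmetic. Your excess $(h-2)a^2+ha-h+1$ is exactly the paper's, and your factorization $y(1+x)(a+1)<a(x+y+1)$ is equivalent to its $u(h-1-u)$ comparison.

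\textbf{The gap.} It lies in the reduction step you flagged, because the monotonicity principle you propose for it is false. Move one non-neighbor from a part whose current neighborhood has size $N$ to a part whose current neighborhood has size $N'$. This replaces $NN'$ by $(N+1)(N'-1)=NN'+N'-N-1$. So the product drops if and only if $N'\le N$; what matters is the current load, not which part is smaller. Two examples show this.
\begin{itemize}
\item Start from $(0;h-1,1)$, meaning nothing in the $(a-1)$-part and $h-1$ and $1$ in two $a$-parts. Moving a unit into the $(a-1)$-part turns $(a-1)(a-h+1)$ into $(a-2)(a-h+2)$. This is an increase of $h-3>0$ for $h\ge4$, although the receiving part is smaller and keeps a positive count.
\item Moving a unit from an empty $(a-1)$-part into an $(a+1)$-part already holding $h-1$ non-neighbors changes the product by $2-h<0$. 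This contradicts ``moving a unit into a larger part increases it''.
\end{itemize}
Hence neither ``it suffices to check the extreme configuration'' nor ``configurations with more mass in $a$-parts are worse'' follows from your exchange step. Indeed, along the segment from $(0;h-1,1)$ to $(h-2;1,1)$ the product is a concave quadratic taking equal values at both ends.

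\textbf{The repair.} It is the paper's concavity argument. Since $h\le a-2$, all factors are positive, so $w\mapsto\sum_i\log(n_i-w_i)$ is concave. Its minimum over each constrained region is therefore attained at a vertex.

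For the case where some $(a+1)$-part is hit, let $L$ be the set of $(a+1)$-parts. The region $\{w\ge0,\ \sum_iw_i=h,\ \sum_{i\in L}w_i\ge1\}$ has vertices $he_i$ $(i\in L)$ and $e_i+(h-1)e_k$ $(i\in L,\ k\notin L)$. Your configuration gives exactly $a(a-h)$. You must also check $he_i$, and $e_i+(h-1)e_k$ with $k$ an $a$-part. Both give $(a^2-1)\prod_i(1-w_i/n_i)=a(a-h)+h-1\ge a(a-h)$.

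For two hit $a$-parts, write $y_1=1+y_1'$ and $y_2=1+y_2'$. The region becomes a simplex of total $h-2$, with these vertices:
\begin{itemize}
\item your $(h-2;1,1)$;
\item $(0;h-1,1)$ and $(0;1,h-1)$, which have the same product $(a-1)(a-h+1)/a^2$;
\item the configurations putting the $h-2$ extra units in a third $a$-part, which are larger since $(a-1)(a-h+2)-a(a-h+1)=h-2\ge0$.
\end{itemize}
With these vertex lists, your computations finish the proof.

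\textbf{Two small slips.} For $x\ge1$, $y\ge2$ you compared with $a(x+3)$, which covers only $y=2$. Use instead $y(x+1)(a+1)-a(x+y+1)=a\bigl(x(y-1)-1\bigr)+y(x+1)>0$. Also, $C_-$ has $b-2$ parts of size $a$, so the case without an $a$-part is $b=2$, not $b\le1$.
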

\begin{proof}
The core product is $P_-=P_0(1-a^{-2})$. If at least one non-neighbor lies in larger parts, minimize the product of neighborhood sizes over all nonnegative non-neighbor counts for which the sum in the larger parts is at least one. All factors are positive. The logarithm is concave, so a minimum occurs at a vertex of this set. Its vertices put all $h$ non-neighbors in one larger part, or put one there and the other $h-1$ in one other part. Among the latter choices the smallest product puts the other non-neighbors in the $(a-1)$-part. These two products, minus $qP_0/a$, are respectively $P_0(h-1)/a^2$ and zero. They are nonnegative, contradicting the strict inequality. Thus no larger part contains a non-neighbor.

Let $u$ non-neighbors lie in the $(a-1)$-part. The others lie in $a$-parts. If no such part exists, $u=h$. Otherwise, for fixed $u$, the product of their avoidance factors is minimized by putting all $h-u$ non-neighbors in one $a$-part. Compared with $X$, this minimum increases the count by $\frac{P_0(a+1)}{a^3}\,u(h-1-u)$.
The count of $X$ is below the bound by $P_0(a-h)/a^3$. For $1\le u\le h-2$, we have $h\ge3$ and $u(h-1-u)\ge h-2\ge1$; hence the count exceeds the bound. This leaves $u=0,h-1,h$.

For $u=0$, a distribution among at least two $a$-parts has product at least the value at $(h-1,1)$. This follows by transferring integer mass to the largest entry until only one other positive entry, equal to one, remains; each transfer decreases the product. The resulting count minus the bound, after multiplication by $a^4/P_0$, is $(h-2)a^2+ha-h+1>0\qquad(h\ge2)$.
Thus $u=0$ requires concentration in one $a$-part. For $u=h-1$, only one further non-neighbor remains; $u=h$ leaves none. The product for $Y$ equals the product for $X$, and both are below the bound by $P_0(a-h)/a^3$. The product for $Z$ is below it by $P_0(h+1)/a^2$. This proves all assertions, including the small values of $h$.
\end{proof}

\begin{lemma}\label{lem:high-core}
Assume $0\le h\le a-2$. In $C_+$, the strict inequality forces all $h$ non-neighbors to lie in a single $a$-part. This distribution satisfies the strict inequality.
\end{lemma}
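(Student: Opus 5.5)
The plan is to mirror the proof of Lemma~\ref{lem:low-core}. In the present setting there are no internal edges and no missing core cross-edges, so the core is $(r-1)$-colourable in each part. Hence every copy of $K_{r+1}$ consists of $v$ together with one neighbor of $v$ in each core part. If $w_i$ denotes the number of non-neighbors of $v$ in part $i$, then
\[
N_{K_{r+1}}=\prod_i (n_i-w_i),\qquad \sum_i w_i=h .
\]
The core product is
\[
P_+=(a+2)a(a+1)^{-2}P_0=P_0\bigl(1-(a+1)^{-2}\bigr).
\]
For the claimed distribution, with all $h$ non-neighbors in one $a$-part, the count is $P_+(a-h)/a$. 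This lies below the bound $qP_0/a=(a-h)P_0/a$ by exactly $P_0(a-h)/(a(a+1)^2)>0$, because $h\le a-2$. This proves the last sentence of the lemma. For $h=0$ there is nothing else to check, so assume $h\ge1$.

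\emph{Step 1: no non-neighbor lies in a larger part.} Suppose at least one non-neighbor lies in the $(a+2)$-part or in an $(a+1)$-part. Minimize $\sum_i\log(n_i-w_i)$ over the polytope of nonnegative real $(w_i)$ with $\sum_i w_i=h$ and total mass at least one on the larger parts. This function is concave, so the minimum is attained at a vertex. The vertices either put all $h$ non-neighbors into one larger part, or put exactly one into a larger part and the remaining $h-1$ into one other part.

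For these vertices I will compute directly. With all $h$ in the $(a+2)$-part, the count minus the bound, multiplied by $a(a+1)^2/P_0$, equals $a(2h-1)+h>0$. With all $h$ in an $(a+1)$-part, the analogous normalized difference is $h(a^2+3a+1)-a(a+1)\ge 2a+1>0$. For the mixed vertices (one non-neighbor in a larger part and $h-1$ in another part), the smallest product puts the $h-1$ into an $a$-part, or into a larger part when that is smaller. I expect the resulting normalized difference to be a polynomial in $a$ with positive leading coefficient for every $1\le h\le a-2$.

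In each case the count is at least $qP_0/a$, which contradicts the strict inequality. The bookkeeping at these mixed vertices is the step I expect to be the main obstacle. There are several sub-cases, according to whether the single non-neighbor sits in the $(a+2)$-part or in an $(a+1)$-part, and according to where the remaining $h-1$ go. In each sub-case I must confirm that the difference stays nonnegative down to $h=1$ and up to $h=a-2$.

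\emph{Step 2: concentration among the $a$-parts.} Now all non-neighbors lie in the $b+1\ge1$ parts of size $a$. If they occupy at least two of these parts, I repeatedly transfer integer mass to the largest entry. Each transfer decreases the product, by the same concavity reasoning as in Lemma~\ref{lem:low-core}. This continues until the distribution is $(h-1,1)$. I then compare the product $P_+(a-h+1)(a-1)/a^2$ with the bound. The normalized difference is
\[
(a+1)^2\bigl[(a-h+1)(a-1)-a(a-h)\bigr]-a^2(a-h+1)(a-1)\cdot 0-\ldots ,
\]
which reduces to a linear-in-$h$ expression that is positive for $h\ge2$. The dominant term is $(h-1)(a+1)^2$ against $a^2-1$-type losses, which will be verified explicitly. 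So any distribution spread over two or more $a$-parts violates the strict inequality, and only the single-$a$-part distribution remains, as the lemma asserts.
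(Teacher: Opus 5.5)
Your plan follows the paper's argument. It uses the exact formula $N_{K_{r+1}}=\prod_i(n_i-w_i)$, a concavity/vertex argument to keep non-neighbors out of the larger parts, and a transfer of integer mass among the $a$-parts down to $(h-1,1)$. Your computations for the claimed distribution and for the two pure vertices are correct; these give $a(2h-1)+h$ and $h(a^2+3a+1)-a(a+1)$. However, the proof is unfinished at exactly the step you flag. The mixed vertices are only ``expected'' to work. A ``positive leading coefficient in $a$'' argument would not suffice there, because $h$ ranges up to $a-2$ and you need positivity for every admissible pair $(a,h)$, not just for large $a$ with $h$ fixed.

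The gap closes in one line. At a vertex of your polytope, the other $h-1$ non-neighbors necessarily lie in an $a$-part. For every vertex type, the ratio $\prod_i(n_i-w_i)/P_+$ is at least $(a+1-h)/(a+1)$. For the pure vertex this is $(n_L-h)/n_L\ge(a+1-h)/(a+1)$. For a mixed vertex it is $\frac{n_L-1}{n_L}\cdot\frac{a-h+1}{a}\ge\frac{a}{a+1}\cdot\frac{a-h+1}{a}$. This is the paper's device of replacing the larger part size by $a+1$, and it gives
\[
 \frac{a(a+1)^3}{P_0}\Bigl(P_+\frac{a+1-h}{a+1}-\frac{(a-h)P_0}{a}\Bigr)
 =a^2(h-1)+a(3h-1)+h>0\qquad(h\ge1).
\]
So no case split is needed. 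Checking directly, a single non-neighbor in the $(a+2)$-part gives an excess of exactly $P_0h/(a(a+1))$.

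Your Step~2 display is garbled. Using $(a+2)a=(a+1)^2-1$, the excess of the $(h-1,1)$ count over the bound, multiplied by $a^2(a+1)^2/P_0$, is
\[
 (h-1)(a+1)^2-(a-h+1)(a-1)=a\bigl(a(h-2)+3h-2\bigr).
\]
This is positive for $h\ge2$, as you anticipated.
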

\begin{proof}
Here $P_+=P_0(a+2)a/(a+1)^2$. If a non-neighbor lies outside the $a$-parts, the same set argument as in Lemma~\ref{lem:low-core} applies. For a lower bound, replace the size of the chosen larger part by $a+1$ inside its avoidance factor, even if only the $(a+2)$-part is available. The two possible vertex types then both give a factor at least $(a+1-h)/(a+1)$. The difference between this lower bound and the required bound, multiplied by $a(a+1)^3/P_0$, is $a^2(h-1)+a(3h-1)+h>0\qquad(h\ge1)$.
Thus all non-neighbors lie in $a$-parts. If at least two of those parts contain them, moving the non-neighbors into one part gives the minimum at $(h-1,1)$. Its excess over the bound, multiplied by $a(a+1)^2/P_0$, is $a(h-2)+3h-2>0\qquad(h\ge2)$.
Only one $a$-part can therefore contain non-neighbors. The resulting count is $(qP_0/a)(1-(a+1)^{-2})$, which is strictly below the bound.
\end{proof}

\begin{lemma}\label{lem:core-order}
In the regime $h=o(a)$, the distributions of Lemma~\ref{lem:low-core} satisfy $\lam(X)>\lam(Y)>\lam(Z)$ when $h\ge2$, and $\lam(X)=\lam(Y)>\lam(Z)$ when $h=1$. If both cores exist, the graph from $C_+$ in Lemma~\ref{lem:high-core} has larger spectral radius than every graph satisfying the bound from $C_-$, for sufficiently large $a$.
\end{lemma}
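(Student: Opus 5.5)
The natural tool is the one-vertex response equation \eqref{eq:core-response}: every graph in question is a complete multipartite core ($C_-$ or $C_+$) with no internal edges, plus a vertex $v$ having a prescribed set of $h$ non-neighbors. Its spectral radius is therefore the root near $\alpha n$ of
\[
F_0(x)+\frac{x(1-W(x))^2}{x(x+1)-h+Q(x)}=1,
\]
where only $W=\sum_i w_i/(x+b_i)$ and $Q=\sum_i w_i^2/(x+b_i)$ depend on the distribution $(w_i)$. Since $F_0$ is fixed within a given core and the left side is strictly decreasing in $x$ with derivative $-1/n+O(n^{-2})$, comparing two distributions on the same core reduces to comparing the extra summand at a common point $x=\alpha n+O(1)$. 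That summand is decreasing in $W$ and in $Q$. So the plan is to compute $W$ and $Q$ for $X,Y,Z$ and check the signs.

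\textbf{Step 1: the order on $C_-$.} Write $y=x+a$. For $X$ we have $W_X=h/y$ and $Q_X=h^2/y$. For $Y$, $W_Y=(h-1)/(y-1)+1/y$ and $Q_Y=(h-1)^2/(y-1)+1/y$. For $Z$, $W_Z=h/(y-1)$ and $Q_Z=h^2/(y-1)$. Then:
\begin{itemize}
\item $W_Z-W_Y=1/(y-1)-1/y>0$ and $Q_Z>Q_Y$ for $h\ge1$, so $Z$ is strictly worse than $Y$.
\item $W_Y-W_X=(h-1)/(y(y-1))\ge0$, with equality iff $h=1$.
\item $Q_Y-Q_X=(h-1)^2/(y-1)+1/y-h^2/y=(h-1)^2/(y(y-1))-2(h-1)/y$. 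This is negative for $h\ge2$ and $h=o(a)$.
\end{itemize}
The sign of $Q_Y-Q_X$ goes the wrong way, so the $X$--$Y$ comparison is the main obstacle. The extra summand has partial derivative $\Theta(n^{-1})$ in $W$ but only $\Theta(n^{-3})$ in $Q$. Hence the $W$-gain for $X$ is of order $(h-1)/n^3$, while the $Q$-loss is of order $(h-1)/n^4$. The $W$ effect therefore dominates, giving $\lambda(X)>\lambda(Y)$ for $h\ge2$. When $h=1$, $X$ and $Y$ are literally the same graph, so $\lambda(X)=\lambda(Y)$.

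To make this rigorous rather than asymptotic, I would do the comparison exactly in the variables $(W,Q)$ at the root of the $X$ equation. I would bound the increment of the summand by the mean value theorem and use $h=o(a)$ to keep the error smaller than the main term. The $Y$-versus-$Z$ step needs no such care, since both $W$ and $Q$ move in the same direction.

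\textbf{Step 2: $C_+$ against $C_-$.} The graph from Lemma~\ref{lem:high-core} has all $h$ non-neighbors in one $a$-part, so on $C_+$ we get $W=h/(x+a)$ and $Q=h^2/(x+a)$, exactly the values for $X$ on $C_-$. Thus the extra summands of $\lambda(C_+{+}v)$ and $X$ coincide as functions of $x$. The response equations then differ only through $F_0^+-F_0^-$, which by \eqref{eq:third-difference} (applied with $m$ replaced by $a$) is $6x/[(x+a-1)(x+a)(x+a+1)(x+a+2)]>0$. Since the left side is decreasing in $x$, the $C_+$ root is strictly larger than the $X$ root. Combined with Step 1 and Lemma~\ref{lem:low-core}, which lists every admissible distribution on $C_-$ as $X$, $Y$ or $Z$ (and only the $(a-1)$-part case when no $a$-part exists, which is again $X$), this beats every graph on $C_-$ satisfying the bound.

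The only additional care is that the $C_-$ and $C_+$ cores have the same total order $N=n-1$, and that the roots lie in the range $x=\alpha n+O(1)$ where the derivative estimates hold. Both facts follow from \eqref{eq:core-vs-proxy}.
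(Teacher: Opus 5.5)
Your route is the paper's: the one-vertex core response \eqref{eq:core-response}, the exact differences of $W$ and $Q$, and a mean-value comparison in which $(\Psi_h)_W\asymp n^{-1}$ dominates $(\Psi_h)_Q=O(n^{-3})$ for $X$ versus $Y$. For $C_+$ versus $C_-$ you also use, as the paper does, the shifted third difference \eqref{eq:third-difference}. You handle $Y$ versus $Z$ by pure monotonicity, since both $W$ and $Q$ increase. That is correct and a little shorter than the paper's argument. It does not give the quantitative gap \eqref{eq:core-yz}, which the paper reuses in Theorem~\ref{thm:endpoint-order}, but the lemma as stated does not need it.

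There is, however, one false step in Step~2. Suppose $C_-$ has no $a$-part, that is, $b=2$. This is compatible with $C_+$ existing whenever $j\ge2$; in the endpoint application it happens for $r\ge4$, $s=r-1$. In that case Lemma~\ref{lem:low-core} forces all $h$ non-neighbors into the $(a-1)$-part. This is the $Z$-pattern, not $X$. Its data are $W=h/(x+a-1)$ and $Q=h^2/(x+a-1)$, whereas the $C_+$ graph has $W=h/(x+a)$ and $Q=h^2/(x+a)$. So for $h\ge1$ the extra summands do \emph{not} coincide, and Step~1 says nothing here because no $X$ exists. Your argument therefore leaves this case unproved.

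The repair uses a fact you already invoked. The $C_-$ values of $W$ and $Q$ are at least the $C_+$ values. The extra summand decreases in $W$ (since $W<1$) and in $Q$. Hence, at every $x$ in the root range, the $C_+$ extra summand is at least the $C_-$ one. Adding $F_0^+-F_0^->0$ and using that the response decreases in $x$ gives the strict root comparison. This is exactly how the paper closes that case.
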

\begin{proof}
Use the exact core response \eqref{eq:core-response}, and let $n=ra+j+1$ be the order including $v$. The roots are $\alpha n+O_r(1)$, and the derivative in $x$ is $-1/n+O_r(n^{-2})$. Put $A=x+a-1$ and write the extra response term as $\Psi_h(x,W,Q)=\frac{x(1-W)^2}{x(x+1)-h+Q}$.
Throughout the segment between the relevant values of $W,Q$,
\[
 (\Psi_h)_W=-\frac{2}{\alpha n}
 \left(1+O_r\left(\frac{h+1}{n}\right)\right),\qquad
 (\Psi_h)_Q=O_r(n^{-3}).
\]
For $Y$ minus $X$, the exact differences are
\[
 W_Y-W_X=\frac{h-1}{A(A+1)},\qquad
 Q_Y-Q_X=(h-1-2A)\frac{h-1}{A(A+1)}.
\]
The two changes have opposite signs, so both must be retained. Integrating the displayed partial derivatives, then using the root derivative, gives
\begin{equation}\label{eq:core-xy}
 \lam(X)-\lam(Y)=\frac{2(h-1)}{\alpha n^2}
 \left(1+O_r\left(\frac{h+1}{n}\right)\right).
\end{equation}
For $Z$ minus $Y$, $W_Z-W_Y=\frac1{A(A+1)},\qquad Q_Z-Q_Y=\frac{2h-1}{A}-\frac1{A+1}$.
The second difference is $O_r(hn)$ times the first. Thus the same argument gives
\begin{equation}\label{eq:core-yz}
 \lam(Y)-\lam(Z)=\frac{2}{\alpha n^2}
 \left(1+O_r\left(\frac{h+1}{n}\right)\right).
\end{equation}
The relative errors in \eqref{eq:core-xy} and \eqref{eq:core-yz} tend to zero for $h=o(a)$. The equalities at $h=0,1$ follow directly from the definitions.

If both cores exist and $C_-$ has an $a$-part, compare its best graph $X$ with the graph from $C_+$. Their $W,Q$ terms are identical. The core response difference is $g_x(a+2)-3g_x(a+1)+3g_x(a)-g_x(a-1)>0$
by \eqref{eq:third-difference} with the variable shifted. Hence the $C_+$ graph has strictly larger root. If $C_-$ has no $a$-part, its only allowed distribution is $Z$, with $W=h/(x+a-1)$ and $Q=h^2/(x+a-1)$. Both are at least the corresponding values for $C_+$, which use denominator $x+a$. The extra response decreases in $W$ when $W<1$ and in $Q$, and the core response is again strictly larger for $C_+$. This proves the last assertion, also at $h=0$.
\end{proof}

\subsection*{The endpoint}
Suppose a sequence of extremal graphs for the strict problem has $q/m\to1$. Corollary~\ref{cor:global-entry} gives $p\le n$ and $z=o(n)$, and Theorem~\ref{thm:core} gives a vertex $v$ meeting almost all internal edges. Write $C_0=T_{n-1,r}$ in the form $C_0=K_r((a+1)^{[j]},a^{[b]}),\quad j+b=r$,
where $a=m-1$, $j=r-1$, $b=1$ for $s=0$, and $a=m$, $j=s-1$, $b=r-s+1$ for $s>0$.
Let $\widehat h=m-q$, which is the parameter called $h$ in Lemma~\ref{lem:core-expansion}, and put
\[
 h=a-q=\widehat h-\chi_s,
 \qquad \chi_s=\begin{cases}1,&s=0,\\0,&s>0.\end{cases}
\]
In the partition of the core let $D_c$ be the squared part-size defect relative to $C_0$, let $k,z_o$ count its internal and missing edges, and set
$t=n-1-d_G(v)$ and $d_0=t-h$. Thus $d_0=\Delta+\chi_s$, where $\Delta=t-\widehat h$ is the variable in Lemma~\ref{lem:core-expansion}. These integer data and the part deviations are bounded. Equation~\eqref{eq:core-budget} gives
$\Delta\ge k-\chi_s-o(1)$, and hence integrality gives
\begin{equation}\label{eq:endpoint-integer-budget}
 d_0\ge k.
\end{equation}

There is an almost balanced core extension $R$ with $h$ non-neighbors that satisfies the strict inequality: use $C_+$ if available, and otherwise use $C_-$, with distribution $X$ if it has an $a$-part and $Z$ if not. Its squared defect is two. In terms of Section~\ref{sec:results}, this graph is $E_{n,r,q}$ in every other case and $V_{m,q}$ in the case $r=3,s=2$. The special one-edge choice is irrelevant here because $q\to\infty$.
For $R$ the actual number of non-neighbors is $h$, so its value of $\Delta$ is $-\chi_s$; for $G$ it is $d_0-\chi_s$. Thus their $\Delta$-values differ by exactly $d_0$. Applying \eqref{eq:core-spectrum} to $G$ and $R$ with the same comparison root gives
\begin{equation}\label{eq:general-endpoint-score}
 \begin{split}
 n(\lam(G)-\lam(R))
 &=-\alpha(D_c-2)+2k-2z_o-\frac{2d_0}{\alpha}+o(1)\\
 &\le 2\alpha-\alpha D_c-\frac{2k}{r-1}-2z_o+o(1).
 \end{split}
\end{equation}
The left side of \eqref{eq:general-endpoint-score} is nonnegative in the limit. As $2\alpha<2$, this excludes $z_o\ge1$, and it also excludes $D_c\ge4$. If $D_c=0$, then $k\le r-2$: indeed $k\le\alpha(r-1)=r-2+1/r$. Increasing $d_0$ from $k$ to $k+1$ makes the right side strictly negative, since $2/\alpha>2\alpha$. Thus the only remaining balanced-core data have $d_0=k$ and $z_o=0$. The graph is a balanced core with $k\le r-2$ internal edges, together with a vertex missing $h+k$ core vertices. Lemma~\ref{lem:balanced-core-general} gives at least $qP_0/a=q c_r(n)$ cliques, contradicting the strict inequality.

Therefore $D_c=2$. Equation \eqref{eq:general-endpoint-score} forces $k=z_o=d_0=0$. The core is exactly $C_-$ or $C_+$, and $v$ has exactly $h$ non-neighbors. Lemmas~\ref{lem:low-core}--\ref{lem:core-order} now identify the unique best graph. If $s=0$, only $C_+$ exists. If $s=1$, only $C_-$ exists and it has $r-2$ middle parts. If $s=2$, only $C_-$ exists and it has $r-3$ middle parts: its best graph is $X$ for $r\ge4$, and it is $Z=V$ for $r=3$. If $s\ge3$, $C_+$ exists and is strictly better than every allowed $C_-$ extension. These are exactly the graphs stated in Theorem~\ref{thm:main}.

\begin{proof}[Proof of the exact classification in Theorem~\ref{thm:main}]
For each $n$ and $q$, let $G$ be an extremal graph for the strict problem. If this conclusion failed for arbitrarily large orders with fixed $r$, choose a sequence witnessing failure and pass to a subsequence with fixed $s$ and convergent $q/m$. A zero limit is excluded by Subsection~\ref{sec:sublinear}, including the two small-$q$ exceptions. A limit in $(0,1)$ is excluded by the bounded comparisons in this section. A limit equal to one is excluded by the endpoint argument. In the case $r=3,s=2$ the only classes that remain for $q\ge4$ are $U,V$; their exact ordering is established in Section~\ref{sec:crossing}. All other cases have already been excluded, so these arguments prove uniqueness wherever a single class is stated, and exclude every other equality class. The conclusion is uniform in $q$. Indeed, if no single $n_0(r)$ worked, then for every integer $j$ there would be an order $n_j\ge j$, an admissible $q_j$, and a counterexample. After passing to a subsequence, $n_j\bmod r$ would be fixed and $q_j/\lfloor n_j/r\rfloor$ would converge to a point of $[0,1]$. The three cases just proved would then give a contradiction. Hence one value $n_0(r)$ works for every $1\le q<n/r$.
\end{proof}

\subsection{The non-strict problem}
\begin{proof}[Proof of Theorem~\ref{thm:closed}]
The graph $T^*_{n,r,q}$ has exactly $q c_r(n)$ copies of $K_{r+1}$ and has spectral radius $\alpha n+f_r(q/n)+O_r(n^{-1})$. By \cite[Theorem~7.2 and the equality statement following it]{FLLM}, the exact assertion and uniqueness already hold for $q\le\delta_r n$, where $\delta_r>0$ depends only on $r$. We therefore prove the exact assertion only for $q>\delta_r n$.

Suppose that a sequence of other extremal graphs exists in this remaining range. Passing to a subsequence, assume that $s$ is fixed and $q/m\to\gamma\in(0,1]$. Corollary~\ref{cor:global-entry} applies. If $0<\gamma<1$, Lemma~\ref{lem:integer} applies and its limit relative to $T^*$ must be nonnegative. Its upper bound is $-\alpha D_2+c_\gamma d-a_\gamma(1-\gamma)z_h-b_\gamma k-2z_o$. For $d<0$ this is negative. For $d>0$, $D_2\ge d(d+1)\ge2d$ and $c_\gamma<2\alpha$ again make it negative. Hence $d=0$, and all remaining nonpositive terms vanish. Equality in \eqref{eq:integer} also gives $e=0$, so the graph is the balanced graph with the prescribed star.

Suppose now that $\gamma=1$. Delete the vertex given by Theorem~\ref{thm:core}. Write the balanced core as $C_0=K_r((a+1)^{[j]},a^{[b]})$, put $h=a-q$, and define $\chi_s$ as in the strict endpoint argument. The parameter in Lemma~\ref{lem:core-expansion} is $\widehat h=h+\chi_s$. Let $D_c,k,z_o$ be the core defects and let $d_0=n-1-d_G(v)-h$. Since $d_0=\Delta+\chi_s$, equation~\eqref{eq:core-budget} and integrality give $d_0\ge k$.

For $T^*$ the core is balanced, there are no core edits, and the added vertex has exactly $h$ non-neighbors. The difference between its value of $\Delta$ and that of $G$ is $d_0$. Subtracting the two core expansions gives
\[
 n(\lam(G)-\lam(T^*))
 =-\alpha D_c+2k-2z_o-\frac{2d_0}{\alpha}+o(1)
 \le-\alpha D_c-\frac{2k}{r-1}-2z_o+o(1).
\]
After the bounded integer parameters are fixed, nonnegativity forces $D_c=k=z_o=d_0=0$. The core is balanced and the added vertex has exactly $h=o(a)$ non-neighbors. Its clique count is $\prod_i(b_i-w_i)$. Since $\sum_i\log(b_i-w_i)$ is strictly concave on $w_i\ge0$, $\sum_iw_i=h$, its minimum is attained by placing all non-neighbors in one smallest part. The value is $qP_0/a=q c_r(n)$, so the non-strict inequality forces precisely this distribution. Thus $G=T^*$.

These arguments, together with the cited small-linear-range result, prove the exact statement uniformly for all $1\le q<n/r$. It remains to prove \eqref{eq:closed-gap}. Suppose that for some fixed $\eta>0$ there are graphs outside $T^*_{n,r,q}$, with $q/m\ge\eta$, such that $n(\lam(T^*_{n,r,q})-\lam(G))\to0$. Corollary~\ref{cor:global-entry} and the local reductions apply with a fixed error in the spectral lower bound. After taking a subsequence, $q/m$ has a limit $\gamma\in[\eta,1]$. If $\gamma<1$, the interior comparison above applies; if $\gamma=1$, the endpoint comparison applies. Both arguments require only a nonnegative limiting spectral difference and force every defect parameter to vanish. Hence the graph is $T^*$, contrary to the choice of the sequence. This proves the uniform gap.
\end{proof}

\section{Crossing points and thresholds}\label{sec:crossing}
\subsection*{A crossing in the tripartite case}

For real $z\in[1,m]$, equation \eqref{eq:star-response} defines two functions $\lam_U(z)$ and $\lam_V(z)$. When the other two parts both have size $v$, the root equation is $R_{\rm st}(x;u,p)=(x-v)/(2v)$. Clearing denominators gives $P(x;u,v,p)=x^4-vx^3-(p+2uv)x^2-3vpx+2vp(u-p-1)=0$.
Thus the two roots satisfy
\begin{align*}
 P_U(x,z)&=x^4-mx^3-(z+2m(m+2))x^2-3mzx+2mz(m+1-z),\\
 P_V(x,z)&=x^4-(m+1)x^3-(z-1+2m(m+1))x^2\\
 &\hspace{12mm}-3(m+1)(z-1)x+2(m+1)(z-1)(m-z).
\end{align*}
Put $\gamma=z/m$. Uniformly on $[1,m]$,
\begin{equation}\label{eq:crossing-expansion}
 \lam_U(z)-\lam_V(z)=\frac{A(\gamma)}m+\frac{B(\gamma)}{m^2}+O(m^{-3}),
\end{equation}
where
\[
 A(\gamma)=\frac{2-5\gamma-2\gamma^2}{9},\qquad
 B(\gamma)=\frac{22\gamma^4+100\gamma^3+175\gamma^2+152\gamma+64}{216}.
\]
We include a coefficient verification. Set $\epsilon=1/m$, $y=x/m$ and $P_0(y)=y^4-y^3-2y^2$, so $P_0'(2)=12$. The scaled polynomials are $P_0+\epsilon P_1+\epsilon^2P_2+\epsilon^3P_3$. For $U$,
\[
 P_{1,U}=-(\gamma+4)y^2-3\gamma y+2\gamma(1-\gamma),\quad
 P_{2,U}=2\gamma,\quad P_{3,U}=0.
\]
For $V$,
\begin{align*}
 P_{1,V}&=-y^3-(\gamma+2)y^2-3\gamma y+2\gamma(1-\gamma),\\
 P_{2,V}&=y^2-3(\gamma-1)y-2(\gamma-1)^2,\\
 P_{3,V}&=3y-2(1-\gamma).
\end{align*}
Writing $x=2m+h+k/m+j/m^2+O(m^{-3})$, coefficient comparison gives
\begin{align*}
 12h+P_1(2)&=0,\\
 12k+16h^2+P_1'(2)h+P_2(2)&=0,\\
 12j+32hk+7h^3+P_1'(2)k+\tfrac12P_1''(2)h^2
 +P_2'(2)h+P_3(2)&=0.
\end{align*}
These identities give $h=(\gamma^2+4\gamma+8)/6$, $k_U-k_V=A(\gamma)$ and $j_U-j_V=B(\gamma)$. The simple root $y=2$ and closed interval $\gamma\in[0,1]$ ensure that the expansions from the implicit function theorem and their $\gamma$ derivatives are uniform. In particular,
\begin{equation}\label{eq:crossing-derivative}
 \frac{d}{dz}\bigl(\lam_U(z)-\lam_V(z)\bigr)
 =-\frac{5+4\gamma}{9m^2}+O(m^{-3})<0.
\end{equation}
The difference has opposite signs at $z=1,m$; its strict monotonicity in \eqref{eq:crossing-derivative} gives exactly one crossing. The leading root is $\gamma_0=(\sqrt{41}-5)/4$. Substituting $z=\gamma_0m+b$ into \eqref{eq:crossing-expansion} gives coefficient $A'(\gamma_0)b+B(\gamma_0)$ at $m^{-2}$. Reduction by $2\gamma_0^2+5\gamma_0-2=0$ gives
\[
 A'(\gamma_0)=-\frac{\sqrt{41}}9,\qquad
 B(\gamma_0)=\frac{198-19\gamma_0}{288}.
\]
Solving for $b$ proves \eqref{eq:crossing-location}; the remaining error is $O(m^{-3})$ and the slope has order $m^{-2}$.

This proves the crossing assertion in Theorem~\ref{thm:main}. Its use is restricted there to $q\ge4$; the mixed graph $M_m$ has already been treated separately at $q=3$.

\subsection*{The difference between the strict and non-strict problems}
Put $J_r(n,q)=\lam(T^*_{n,r,q})-\Lambda_r^<(n,q)$.
The two integer bounds in this difference are $q c_r(n)$ and $q c_r(n)-1$.

\begin{corollary}\label{cor:jump}
For fixed $r\ge3$, uniformly over $1\le q<n/r$, one has
\begin{equation}\label{eq:regular-jump}
 J_r(n,q)=\frac{2(r-1)}{rn}+O_r(n^{-2})
 \qquad\text{if }(r,n\bmod r)\ne(3,2).
\end{equation}
For $n=3m+2$, putting $\gamma=q/m$ gives
\begin{equation}\label{eq:exception-jump}
 J_3(n,q)=\frac1n\min\left\{
 \frac{4+4\gamma+\gamma^2}{3},\frac{6-\gamma-\gamma^2}{3}
 \right\}+O(n^{-2}).
\end{equation}
In particular, the jump is uniformly of order $n^{-1}$ for every fixed clique in the entire natural range.
\end{corollary}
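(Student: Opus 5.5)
The plan is to read off $\Lambda_r^<(n,q)$ from Theorem~\ref{thm:main} and then compare $T^*_{n,r,q}$ with each extremal graph through the exact response equation \eqref{eq:response}, using the closed star response \eqref{eq:star-response}. I will not use the asymptotic Lemma~\ref{lem:integer}: its error is only $o(1)/n$, and it is not uniform up to $q=m$.

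\textbf{Regular case $(r,s)\ne(3,2)$.} First consider every case except $s=2$, $q=1$. Here $E_{n,r,q}$ and $T^*_{n,r,q}$ carry a $q$-edge star in a part of the same size $u=m+\kappa$.
\begin{itemize}
\item For $s=0$ the part is an $m$-part of $B^-$. For $s=1$ or $s=2$ it is an $(m+1)$-part of $B^-$. For $s\ge3$ it is an $(m+1)$-part of $B^+$.
\item The star summand $\Psi(x)=R_{\rm st}/(1+R_{\rm st})$ is therefore the same function of $x$ in both equations.
\item The two responses differ only in the remaining parts, where one pair $(a,a)$ is replaced by $(a+1,a-1)$. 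Hence
\[
 F_E(x)-F_{T^*}(x)=g_x(a+1)+g_x(a-1)-2g_x(a)=-\frac{2x}{(x+a-1)(x+a)(x+a+1)}.
\]
\item At $x=\alpha n+O(1)$ and $a=n/r+O(1)$, this difference equals $-2\alpha/n^2+O_r(n^{-3})$.
\end{itemize}

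Next I need the derivative $F'(x)=-1/n+O_r(n^{-2})$, uniformly for $1\le q\le m$.
\begin{itemize}
\item The base part contributes $-\sum_i n_i/(x+n_i)^2=-1/n+O(n^{-2})$.
\item The star part changes the derivative only by $O(q/n^3)=O(n^{-2})$. This follows by differentiating \eqref{eq:star-response} exactly, since $x^2-q\asymp n^2$; no walk truncation is needed.
\item Both roots are $\alpha n+O(1)$, so the mean value theorem gives $\lam(T^*)-\lam(E)=2\alpha/n+O_r(n^{-2})$, which is \eqref{eq:regular-jump}.
\end{itemize}
For $s=2$, $q=1$, the graph $E$ differs from the one-edge extension of $B^-$ by $O_r(n^{-2})$, by \eqref{eq:one-edge-gap}. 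So the same formula holds there too.

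\textbf{Case $r=3$, $s=2$.} For $q\ge4$ we have $\Lambda^<=\max\{\lam(U_{m,q}),\lam(V_{m,q})\}$, so $J_3=\min\{\lam(T^*)-\lam(U),\lam(T^*)-\lam(V)\}$. I will expand the three roots $\lam(T^*)$, $\lam(U)$, $\lam(V)$ in the form $x=2m+h(\gamma)+k(\gamma)/m+O(m^{-2})$, exactly as in the coefficient scheme of Section~\ref{sec:crossing}.
\begin{itemize}
\item For $U$ and $V$ the quartics $P_U$ and $P_V$ are already written down.
\item For $T^*=S(m+1,m+1,m;q)$ the two unstarred parts are unequal. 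So I clear denominators in the three-summand response equation directly, which gives a polynomial of the same type whose scaled leading part is again $P_0(y)$ with simple root $y=2$.
\item The implicit function theorem on $\gamma\in[0,1]$ makes the $O(m^{-2})$ remainders uniform. The leading shift $h(\gamma)$ is common to all three roots, so only the $k$-coefficients must be compared.
\item I expect $n(\lam(T^*)-\lam(U))\to(6-\gamma-\gamma^2)/3$ and $n(\lam(T^*)-\lam(V))\to(4+4\gamma+\gamma^2)/3$.
\item Consistency check: the difference of these two limits must be $3A(\gamma)$ with $n=3m+O(1)$, using \eqref{eq:crossing-expansion}. Indeed it equals $(2-5\gamma-2\gamma^2)/3$, as required.
\end{itemize}
For $q=1,2$ the extremal graph is $U$, and the $U$ expression lies within $O(n^{-2})$ of the minimum. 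For $q=3$ the extremal graph is $M_m$, which differs from $U_{m,3}$ by $O(n^{-2})$ by \eqref{eq:mixed-gap}. In both situations $\gamma=O(1/n)$, and both terms of the minimum are then $2/n+O(n^{-2})$, so \eqref{eq:exception-jump} holds there as well.

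\textbf{Main obstacle.} The hard step is uniformity of the $O(n^{-2})$ error all the way to $q$ near $m$, where $q$ is linear in $n$. The walk-series expansion \eqref{eq:R-expansion} loses a factor there. I will handle this by working only with the exact rational star response and the exact third and second differences of $g_x$, so that every error term comes from smooth functions of $(x/n,q/n)$ on a compact set. The final sentence of the corollary then follows, because both expressions in the minimum are bounded below by a positive constant on $[0,1]$.
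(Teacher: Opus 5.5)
Your regular case is the paper's argument: the star summands cancel exactly, the base second difference gives $2\alpha/n^2$, you divide by the root derivative, and you handle $s=2$, $q=1$ via \eqref{eq:one-edge-gap}. For $r=3$, $s=2$, expanding the rational star responses is the alternative justification of uniformity in $\gamma\in[0,1]$ that the paper itself mentions. However, your two expected limits are interchanged. The correct values are
\[
 n\bigl(\lam(T^*)-\lam(U_{m,q})\bigr)\to\frac{4+4\gamma+\gamma^2}{3},\qquad
 n\bigl(\lam(T^*)-\lam(V_{m,q})\bigr)\to\frac{6-\gamma-\gamma^2}{3}.
\]
You can check this at $q=1$. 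There $U_{m,1}$ and $T^*$ each carry one internal edge, and their bases differ by squared part-size defect two, so the cost is $2\alpha/n=4/(3n)$. By contrast, $V_{m,1}=T_{n,3}$ differs from $T^*$ by one internal edge, so the cost is $2/n$. Your consistency check does not confirm your assignment. The quantity $n(\lam(U)-\lam(V))$ is the $V$-limit minus the $U$-limit. With your labels this is $-(2-5\gamma-2\gamma^2)/3=-3A(\gamma)$, so the check, done in the right order, detects the swap.

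The swap matters in your treatment of $q\le3$. You assert that both terms of the minimum are $2/n+O(n^{-2})$ when $\gamma=O(1/n)$, but they are $4/(3n)$ and $2/n$. With your labels, $J_3=\lam(T^*)-\lam(U)+O(n^{-2})$ would equal $2/n$ rather than the minimum $4/(3n)$, so the argument as written contradicts the statement. With the corrected labels, the $U$-term is the smaller one exactly when $\gamma<(\sqrt{41}-5)/4$, and in particular when $\gamma=O(1/n)$. Then $J_3=\lam(T^*)-\lam(U)+O(n^{-2})$ equals the minimum for $q=1,2$, and for $q=3$ after using \eqref{eq:mixed-gap}. This agrees with $U$ being extremal below the crossing. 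For $q\ge4$ the minimum is symmetric in the two expressions, so the swap is harmless once the coefficient computation is actually carried out.
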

\begin{proof}
In every other case, outside the small exception $s=2,q=1$, the strict and closed graphs have the same star size in parts of the same size. Their changes in the response cancel exactly. Their base squared defects differ by two, so at $x=\alpha n+O(1)$ their responses differ by $2\alpha/n^2+O_r(n^{-3})$, uniformly for $q\le m$. Division by the root derivative $1/n+O_r(n^{-2})$ proves \eqref{eq:regular-jump}. For $s=2,q=1$, moving the star part by one changes the comparison only by $O_r(n^{-2})$.

For the case $r=3$ and $s=2$, the bounded response expansion used in Lemma~\ref{lem:integer} can be applied directly to these two fixed configurations, uniformly for $0\le\gamma\le1$. Relative to $T^*$, the data for $U$ are $(D_2,d,\ell,k,z_h,z_o)=(2,1,0,0,0,0)$,
and hence
\[
 n\{\lam(U)-\lam(T^*)\}=-\frac43-\frac{4\gamma+\gamma^2}{3}+O(n^{-1}).
\]
For $V$ the data are $(D_2,d,\ell,k,z_h,z_o)=(0,-1,-1,0,0,0)$,
so
\[
 n\{\lam(V)-\lam(T^*)\}=-(2+\gamma)+\frac{4\gamma+\gamma^2}{3}+O(n^{-1}).
\]
Equivalently, $n(\lam(T^*)-\lam(U))=\frac{4+4\gamma+\gamma^2}{3}+O(n^{-1})$,
\[
 n(\lam(T^*)-\lam(V))=\frac{6-\gamma-\gamma^2}{3}+O(n^{-1}).
\]
The uniformity also follows by expanding the two rational star responses on the interval $0\le\gamma\le1$. For $q\ge4$, Theorem~\ref{thm:main} takes the larger of $U,V$, proving \eqref{eq:exception-jump}. For $q=1,2$, that larger graph is $U$ for large $m$. At $q=3$, replacement of $U$ by $M_m$ changes the jump by $2/n^2+O(n^{-3})$, already within the stated error by \eqref{eq:mixed-gap}. Finally both entries in the minimum have a positive lower bound on $[0,1]$, proving the order assertion.
\end{proof}

\subsection*{The matching threshold}
We now recover the matching threshold on the larger range given by the exact strict theorem, and retain the constant term in its crossing. Put $\alpha=(r-1)/r$ and
\[
 \kappa_{r,s}=\begin{cases}1,&(r,s)=(3,2),\\0,&\text{otherwise}.
 \end{cases}
\]
For $q=\Theta(\sqrt n)$, let $E^{\rm st}_{n,r,q}$ denote the strict star graph: it is $E_{n,r,q}$ in every other case and $U_{m,q}$ in the case $r=3,s=2$. The strict optimum equals this branch throughout any fixed positive square-root window for sufficiently large $n$.

For real $z$ in such a window and a special part of size $u$, the exact matching and star responses are
\[
 R_M(x;u,z)=\frac{u-2z}{x}+\frac{2z}{x-1},\qquad
 R_S(x;u,z)=\frac{u-z-1}{x}+\frac{(z+1)x+2z}{x^2-z}.
\]
Both define roots of the response equations, agreeing with graph spectra at integer $z$. Their increments over $u/x$ are
\[
 R_M-\frac ux=\frac{2z}{x(x-1)},\qquad
 R_S-\frac ux=\frac{z(2x+z+1)}{x(x^2-z)}.
\]
The relevant roots satisfy $x=\alpha n+O_r(n^{-1/2})$. The base of $E^{\rm st}$ has squared part-size defect two; its star part is larger than the matching part by $\kappa_{r,s}$. Expansion of the exact responses, or \eqref{eq:fine} at integer parameters, yields
\begin{equation}\label{eq:matching-difference}
 \lam_{E^{\rm st}}(z)-\lam_Y(z)
 =-\frac{2\alpha}{n}+\frac{z^2-z}{\alpha n^2}
 -\frac{4\kappa_{r,s}z}{n^2}+O_r(n^{-2}).
\end{equation}
Indeed, the squared-degree sums of a $z$-edge star and matching are $z^2+z$ and $2z$, respectively. The remainder in \eqref{eq:fine} is $O((z^2+z)/n^3)=O(n^{-2})$, and the base expansion has the same error order.

We also need to control the derivative. Set $\epsilon=n^{-1/2}$ and $t=z\epsilon$. After writing $x/n=\alpha+u$, the rational response equations are analytic in $(\epsilon,t,u)$ near $\epsilon=0$, for $t$ in any fixed closed positive interval. At $\epsilon=0,u=0$, their derivative in $u$ is $-1$. The implicit function theorem therefore makes the expansion uniform together with its $t$ derivative. The displayed increments give
\begin{equation}\label{eq:matching-derivative}
 \frac{d}{dz}\bigl(\lam_{E^{\rm st}}(z)-\lam_Y(z)\bigr)
 =\frac{2z}{\alpha n^2}+O_r(n^{-2}).
\end{equation}
Here we differentiate the rational equations themselves, so the error is uniform after differentiation.

\begin{proof}[Proof of Theorem~\ref{thm:matching-threshold}]
For $z=\delta\sqrt n$, multiply \eqref{eq:matching-difference} by $n$. Its limit is $-2\alpha+\delta^2/\alpha$, whose unique positive zero is $\sqrt2\alpha$. Moreover, for every fixed $b>0$, the same comparison is uniform for integers $1\le q\le b\sqrt n$: the small-$q$ exceptions in Theorem~\ref{thm:main} change the difference by only $O_r(n^{-2})$. If $\delta<\sqrt2\alpha$, the strict maximum is therefore below $\lam(Y_{n,r,q})$ for all integers $q<\delta\sqrt n$ and sufficiently large $n$. This proves the desired implication. If $\delta>\sqrt2\alpha$, choose $\delta'$ strictly between them and put $q=\lfloor\delta'\sqrt n\rfloor$. The strict extremal star has larger spectrum than the matching graph and violates the implication. Thus the supremum is $\delta_r^*=\sqrt2\alpha$.

In a window $a\sqrt n\le z\le b\sqrt n$ with $0<a<\sqrt2\alpha<b$, the difference has opposite signs at the endpoints, and \eqref{eq:matching-derivative} is strictly positive for large $n$. There is exactly one crossing. Substitute $z=\sqrt2\alpha\sqrt n+B$ into \eqref{eq:matching-difference}. The coefficient of $n^{-3/2}$ is $\sqrt2\alpha\left(\frac{2B-1}{\alpha}-4\kappa_{r,s}\right)$.
It vanishes at $B=1/2+2\alpha\kappa_{r,s}$. The remaining spectral error is $O(n^{-2})$, while the slope is $\Theta(n^{-3/2})$. The location error is therefore $O(n^{-1/2})$, proving \eqref{eq:matching-location}. At an integer crossing the strict extremal graph itself has spectrum equal to the matching threshold, so the implication with a weak spectral inequality fails there.
\end{proof}
\section{Further results}\label{sec:gaps}

\subsection{Comparing complete multipartite graphs}
The exact comparison \eqref{eq:third-difference} is a special case of the following moment expansion. It is a direct consequence of Zhang's multipartite identity \cite{ZhangWalks}; we include the short calculation because the coefficient and the uniform error are used below.

\begin{proposition}\label{thm:moments}
Fix $r\ge3$, an integer $k\ge2$, and positive constants $A,B$. Let $G_a,G_b$ have a common chosen part of size $u=n/r+a_0$, containing the same graph $H$ with at most $Bn$ edges. All other parts are independent, and all cross-edges are present. Their part deviations from $n/r$ are $a_1,\ldots,a_{r-1}$ and $b_1,\ldots,b_{r-1}$, with absolute values at most $A$. Assume the graphs have the same order and $\sum_{i=1}^{r-1}a_i^j=\sum_{i=1}^{r-1}b_i^j \qquad(1\le j<k)$.
Writing $\Delta M_k=\sum_i a_i^k-\sum_i b_i^k$, we have
\begin{equation}\label{eq:moment-comparison}
 \lam(G_a)-\lam(G_b)
 =(-1)^{k-1}\al_r\frac{\Delta M_k}{n^{k-1}}
 +O_{r,k,A,B}(n^{-k}).
\end{equation}
\end{proposition}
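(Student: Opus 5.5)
Both graphs share the chosen part with the same internal graph $H$, so their response equations \eqref{eq:response} have the identical summand $\Phi(x)=R_H(x)/(1+R_H(x))$ for that part. The remaining $r-1$ parts are independent sets, each contributing $g_x(n_i)=n_i/(x+n_i)$. The spectral radii $\lam_a,\lam_b$ are the unique roots of
\[
F_a(x)=\Phi(x)+\sum_{i=1}^{r-1}g_x\bigl(n/r+a_i\bigr)=1,\qquad F_b(x)=\Phi(x)+\sum_{i=1}^{r-1}g_x\bigl(n/r+b_i\bigr)=1.
\]
The plan is to compute $F_a-F_b$ exactly as a power series in the deviations and then convert this response gap into a root gap using the response derivative.

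First, I would locate both roots. Since $e(H)\le Bn$ and all deviations are $O(A)$, the coarse formula \eqref{eq:coarse} (or \eqref{eq:R-expansion} with \eqref{eq:base-variance-identity}) gives $\lam_a,\lam_b=\al n+O_{r,A,B}(1)$. On this interval, $F_a'(x)=-1/n+O(n^{-2})$, exactly as in the proofs of Lemma~\ref{lem:response-expansions} and Theorem~\ref{thm:concentration}.

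Second, I would expand $g_x(c+t)$ with $c=n/r$. Writing $g_x(c+t)=1-x/(x+c+t)$ gives
\[
g_x(c+t)=1-\sum_{j\ge0}(-1)^j\frac{x\,t^j}{(x+c)^{j+1}},
\]
which converges for $|t|\le A$ since $x+c\asymp n$. Summing over $i$, all terms with $1\le j<k$ cancel by the moment hypothesis. The $j=0$ term also cancels, because both families have $r-1$ parts. Hence
\[
F_a(x)-F_b(x)=(-1)^{k-1}\frac{x\,\Delta M_k}{(x+c)^{k+1}}+O_{r,k,A}\Bigl(\frac{x}{(x+c)^{k+2}}\Bigr).
\]
At $x=\al n+O(1)$ we have $x+c=n+O(1)$, so the leading term is $(-1)^{k-1}\al\,\Delta M_k/n^k\,(1+O(n^{-1}))$. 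The total gap is therefore $O(n^{-k})$.

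Third, I would convert this to a root difference. From $F_a(\lam_a)=1=F_b(\lam_b)$ we get $F_b(\lam_a)-F_b(\lam_b)=-(F_a-F_b)(\lam_a)$. The mean value theorem with $F_b'=-1/n+O(n^{-2})$ then gives $\lam_a-\lam_b=n(F_a-F_b)(\lam_a)(1+O(n^{-1}))$. This shows $\lam_a-\lam_b=O(n^{1-k})$, hence $\lam_a-\lam_b=o(1)$, which justifies evaluating at either root. Substituting yields \eqref{eq:moment-comparison}, since the error is $O(n^{-k})$ after multiplication by $n$.

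The main obstacle is keeping the constants uniform. The $H$ summand $\Phi$ must be controlled only through its derivative, which should be $O(n^{-2})$ uniformly for $e(H)\le Bn$ by the walk-series bounds behind \eqref{eq:R-expansion}. This matters because the dominant derivative then comes from the base part, and the $H$-part derivative perturbs $F_b'$ only at relative order $n^{-1}$. Everything else is an explicit geometric series.
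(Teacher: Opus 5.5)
Your proposal is correct and follows essentially the paper's own proof. You cancel the common chosen-part summand, Taylor-expand $g_x$ at $n/r$ so that the moment hypothesis removes every order below $k$, place both roots at $\al n+O(1)$, and divide by the response derivative $-1/n+O(n^{-2})$ via the mean value theorem.

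One correction to your last paragraph. The full summand $\Phi=R_H/(1+R_H)$ has derivative $-u/(x+u)^2+O(n^{-2})\approx-1/(rn)$, not $O(n^{-2})$. Only the $H$-increment $\Phi-g_x(u)$ has an $O(n^{-2})$ derivative. The chosen part's $-1/(rn)$ is what raises the other parts' contribution $-\al/n$ to the $-1/n$ you correctly use. Had $\Phi'$ really been $O(n^{-2})$, you would divide by $\al/n$ and lose the factor $\al$ in \eqref{eq:moment-comparison}.
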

\begin{proof}
Apply Zhang's multipartite response identity in resolvent form. The chosen-part responses cancel. For $g_x(t)=t/(x+t)$, $\frac{g_x^{(j)}(t)}{j!}=(-1)^{j-1}\frac{x}{(x+t)^{j+1}}$.
Expand at $t=n/r$ and use the assumed moment equalities. At $x=\al_r n+O(1)$,
\[
 F_a(x)-F_b(x)
 =(-1)^{k-1}\frac{x\Delta M_k}{(x+n/r)^{k+1}}
 +O(n^{-k-1}).
\]
Both roots lie in this interval by \eqref{eq:fine}, and the response derivatives are $-1/n+O(n^{-2})$. The mean-value theorem proves \eqref{eq:moment-comparison}.
\end{proof}

For $3\le s\le r-2$, the two stars on the two almost balanced bases with their stars in an $(m+1)$-part have equal first and second part-size moments. Their third moments differ by six. Thus their spectra separate at order $n^{-2}$, with coefficient $6(r-1)/r$, rather than at the order $n^{-1}$ determined by a difference in the quadratic part-size defect.

\subsection{Gaps in the sublinear range}
Let $W_{n,r,q}=E_{n,r,q}$ in every other case, and let $W_{n,3,q}=U_{m,q}$ when $s=2$. The next statement concerns $q$ bounded below by a sufficiently large constant, so neither small-$q$ exception occurs.

\begin{theorem}\label{thm:small-range-gap}
For every fixed $r\ge3$, there are constants $Q_r,\varepsilon_r,\zeta_r>0$ such that, for all sufficiently large $n$ and all integers $Q_r\le q\le\varepsilon_r n$, the unique extremal graph is $W_{n,r,q}$. If $G\not\cong W_{n,r,q}$ also satisfies the strict inequality, then
\begin{equation}\label{eq:small-range-gap}
 \lam(W_{n,r,q})-\lam(G)\ge
 \begin{cases}
 \zeta_r n^{-2},&3\le s\le r-2,\\
 \zeta_r qn^{-2},&\text{otherwise}.
 \end{cases}
\end{equation}
Both orders are best possible. When $3\le s\le r-2$, the star extension $W^-$ of $B^-_{n,r}$ with its star in an $(m+1)$-part is the graph with the second largest spectral radius, up to isomorphism, after increasing $Q_r$ if necessary, and
\begin{equation}\label{eq:second-base-gap}
 \lam(W_{n,r,q})-\lam(W^-)
 =\frac{6\al_r}{n^2}+O_r(n^{-3}).
\end{equation}
\end{theorem}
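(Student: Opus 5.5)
The proof runs the sublinear argument of Subsection~\ref{sec:sublinear} again, keeping explicit lower bounds for every loss instead of only signs. Suppose first that $G$ satisfies the strict inequality and $\lam(G)\ge\lam(W_{n,r,q})-\zeta n^{-2}$ for a small constant $\zeta$; otherwise the gap holds trivially. Since $\lam(W_{n,r,q})=\al n+f_r(q/n)-O_r(n^{-1})$, Corollary~\ref{cor:global-entry} and Proposition~\ref{thm:reduction} apply for $q\le\eps_r n$ with $\eps_r$ small. They give bounded part deviations, $z=O(1)$, $|p-q|=O(1)$ and $M_2(H)\le Cn$.

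The first step removes order-$n^{-1}$ defects. Inequality~\eqref{eq:strict-coarse}, with its deletion cost taken from Lemma~\ref{lem:deletion}, shows that $p\le q-1$, $z\ge1$ or $D\ge4$ each cost at least $c_r/n$. The case $p\ge q+1$ is excluded by counting, and a balanced base is excluded by the strict inequality. Now $q\le\eps_r n$ rather than $q=o(n)$, so I will need the constants in~\eqref{eq:upper-p} and~\eqref{eq:strict-coarse} to be uniform: the error term $O_r(q/n^2+q^2/n^3)$ is at most $O_r(\eps_r/n)$, which is absorbed by $(2-2\al)/n$ once $\eps_r$ is small. After this step $p=q$, $z=0$, and the base is $B^-$ or $B^+$.

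On a fixed base, formula~\eqref{eq:fine} gives the second-order comparison. Any internal graph that is not a single star in the prescribed part loses in one of two ways. It may lose at least $2(q-3)$ in $\sum_v d_H(v)^2$ by Lemma~\ref{lem:walks}, giving a gap $\ge 2(q-3)/(\al n^2)$. Or it may lose in the part-size term $-4\sum_i a_ip_i/n^2$. Because the counting constraints force $p_-\le p_+$ and similar relations, that loss is a positive integer multiple of $4/n^2$. Edges split between parts have $M_2\ge p_ip_j$, so the same bound applies via Theorem~\ref{thm:concentration}. I expect the main obstacle to be the uniformity of the error $O((q^2+q)/n^3)$ against a loss that may be only of order $1/n^2$. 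For a part-size loss of order one with a perfect star elsewhere, I would use the exact response cancellation: identical stars in parts of the same size give identical response increments. The comparison then reduces to a base difference of exact order $n^{-2}$, and in fact of order $q n^{-2}$ when the star moves to a part whose size differs. Here I would use $\Psi_u=-2f_r(q/n)/n^2$ from the proof of Lemma~\ref{lem:integer}, since $f_r(q/n)\asymp q/n$. This suggests that every loss relative to $W$ is $\gtrsim q n^{-2}$, apart from the single base swap. In the cases $s\notin[3,r-2]$, either one base exists or, when $s\le 2$, the $B^+$ star lies in a part whose size differs from that of the $B^-$ star. The allocation computation~\eqref{eq:s2-allocation}, together with Lemma~\ref{lem:integer}'s $\Psi_u$ estimate, then gives $\zeta_r qn^{-2}$, using $q\ge Q_r$.

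In the case $3\le s\le r-2$, both bases exist and both carry the optimal star in an $(m+1)$-part. Their responses differ only in the base, so Proposition~\ref{thm:moments} with $k=3$ and~\eqref{eq:third-difference} give exactly~\eqref{eq:second-base-gap}. Every other graph loses at least $\zeta qn^{-2}\gg 6\al/n^2$ once $Q_r$ is large, which makes $W^-$ the second-best graph. For sharpness, $W^-$ attains the $n^{-2}$ order. In the other cases the order $q n^{-2}$ is attained by moving one edge of the star off the hub, which by Lemma~\ref{lem:walks} costs $M_2=q-1$ and hence $\Theta(q/n^2)$ via Theorem~\ref{thm:concentration}. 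This graph still satisfies the strict inequality, because the moved edge keeps its standard completions and creates only $O(n^{r-3})$ extra cliques; this needs a short check.
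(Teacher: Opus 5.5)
Your outline follows the paper's proof. You reduce via Corollary~\ref{cor:global-entry} and Proposition~\ref{thm:reduction}, and you exclude $p\ne q$, $z\ge1$, $D\ge4$ and the balanced base at cost $c/n$ using \eqref{eq:strict-coarse} with error $O_r(\varepsilon_r/n)$. You then compare allocations on $B^\pm$ at order $q/n^2$, take the base swap from \eqref{eq:third-difference}, and prove sharpness with $W^-$ and $K_{1,q-1}\cup K_2$.

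\textbf{The step that fails.} You assert that the counting constraints make the part-size term $-4\sum_ia_ip_i/n^2$ only a loss relative to $W$. This is false on $B^+$ when $s=2$. There $B^+=K_r(m+2,m^{[r-1]})$ has no $(m+1)$-part, and the strict inequality only forces $k<q/2$ of the edges into $m$-parts. Each such edge \emph{gains} $8/n^2$ in the part-size term. For $r\ge4$ this gain is built into \eqref{eq:s2-allocation}, which you cite. For $(r,s)=(3,2)$, however, the extremal graph $U_{m,q}$ itself lives on this base, and your two-way dichotomy does not cover these allocations. The generic bound $M_2\ge q-3$ from Lemma~\ref{lem:walks} gives only a loss of $3(q-3)/n^2$ when $r=3$. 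That does not beat a gain of up to about $4q/n^2$. You need the sharper bound $M_2(H)\ge k(q-k)$ for edges in different parts, which you mention but do not apply here. It gives a net loss $k(3(q-k)-8)/n^2\ge kq/n^2$ for $q\ge16$, as in the $r=3$, $s=2$ computation of Subsection~\ref{sec:sublinear}.

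\textbf{Smaller points.}
\begin{itemize}
\item Your opening threshold $\lambda(G)\ge\lambda(W)-\zeta n^{-2}$ does not give the required $\zeta_r qn^{-2}$ bound for the graphs you set aside. Start instead from $\lambda(G)\ge\lambda(W)-1/n$, as the paper does. The reductions only need a $-L/n$ lower bound, and $1/n\ge qn^{-2}$ in this window.
\item In the sharpness example, $K_{1,q-1}\cup K_2$ is triangle-free and lies in one part, so its clique count equals that of $W$ exactly. No extra cliques need to be controlled.
\item Your concern about losses of order only $1/n^2$, and the appeal to exact cancellation and $\Psi_u$, are unnecessary. Once the sign issue is handled, every competitor other than $W^-$ loses order $q/n^2$. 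The error $O((q^2+q)/n^3)$ in \eqref{eq:fine} is then a relative error $O(\varepsilon_r+n^{-1})$.
\item Theorem~\ref{thm:concentration} needs parts of a common size. For edges split among parts of different sizes, use \eqref{eq:fine} with $\sum_vd_H(v)^2=q^2+q-2M_2(H)$ directly.
\end{itemize}

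With these repairs your argument coincides with the paper's.
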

\begin{proof}
Choose $L=L(r)$ so that $\lam(W)\ge\al_r n+f_r(q/n)-L/n$. If $\lam(G)\ge\lam(W)-1/n$, then Corollary~\ref{cor:global-entry} and Proposition~\ref{thm:reduction}, used with $\tau=1/2$, apply once $\varepsilon_r$ is small enough. Hence the part-size differences, the number of missing cross-edges, and $p-q$ are bounded by constants depending only on $r$.

The proof of Lemma~\ref{lem:second-layer} gives the following alternatives. Standard completions rule out $p\ge q+1$ once $\varepsilon_r$ is sufficiently small. If $p\le q-1$, $z\ge1$, or the base squared defect is at least four, comparison \eqref{eq:strict-coarse} loses at least $c_0/n$: its error, multiplied by $n$, is $O_r(\varepsilon_r+\varepsilon_r^2+n^{-1})$. A balanced base with $p=q,z=0$ does not satisfy the strict inequality. Graphs below the comparison range already have a gap of at least $1/n$. Thus only $q$-edge extensions of $B^-$ or $B^+$ with all cross-edges present require a finer comparison.

The allocation arguments in Subsection~\ref{sec:sublinear} are uniform in this window after shrinking $\varepsilon_r$. In particular, their uses of $q=o(m)$ require only fixed sufficiently small bounds on $q/m$. A graph other than one star loses at least $2(q-3)$ in its squared-degree sum. A star in a larger allowed part loses a fixed multiple of $q/n^2$ in the part-size term. When $s=2$, \eqref{eq:s2-allocation} gives the same order for the losing allocation or base. The relative error from \eqref{eq:fine} is $O_r(q/n+n^{-1})$ compared with these $q/n^2$ losses. Choose $Q_r$ large and $\varepsilon_r$ small so that these losses remain at least $c_1q/n^2$.

The only smaller comparison occurs when $3\le s\le r-2$. Here the stars on the two bases have the same part size, and their internal responses cancel exactly in \eqref{eq:third-difference}. Their roots are $\al_rn+O_r(1)$, uniformly in the linear range, so the response derivative gives \eqref{eq:second-base-gap}. This proves the lower bounds in \eqref{eq:small-range-gap}. In the case $r=3,s=2$ take $\varepsilon_r$ still smaller so that $q/m$ lies below half the limiting crossing value; $U$ is then the unique extremal graph throughout this window.

For sharpness when $3\le s\le r-2$, use $W^-$. In every other case, replace the internal star by the disjoint union $K_{1,q-1}\cup K_2$, adding isolated vertices to fill its part. It fits for our choice of $\varepsilon_r$. This internal graph is triangle-free and lies in one part, so its clique count is exactly that of $W$. Its squared-degree sum is $q^2-q+2$, and \eqref{eq:fine} gives a loss $\frac{2(q-1)}{\al_r n^2}+O_r((q^2+q)/n^3)=O_r(q/n^2)$.
Finally, when $3\le s\le r-2$, every other graph except $W^-$ loses at least $c_1q/n^2$ or $c_0/n$. Increasing $Q_r$ makes both bounds larger than \eqref{eq:second-base-gap}, proving the claim about the second largest graph.
\end{proof}

In particular, \eqref{eq:small-range-gap} holds whenever $q\to\infty$ and $q=o(n)$. If $q=\Theta(\sqrt n)$, the bound in the second line is of order $n^{-3/2}$. Thus there are two different sources of a spectral loss. Pairs of disjoint internal edges measure the loss when the internal graph is not a star, while the first unequal part-size moment measures the smaller difference between the two bases.

\subsection{Gaps for tripartite graphs}
\begin{proposition}\label{prop:tripartite-gaps}
Fix $0<\eta<1$. For all sufficiently large $m$, the following statements hold.
For $n=3m$ and $\eta m\le q\le m-1$, every graph not isomorphic to $E_{n,3,q}$ loses at least $c_\eta/n$ in spectral radius.
For $n=3m+1$, the same statement for graphs not isomorphic to $E_{n,3,q}$ holds on $\eta m\le q\le(1-\tau)m$ for each fixed $\tau>0$, with a constant allowed to depend on $\tau$.
For $n=3m+2$ and $\eta m\le q\le m$, every graph not isomorphic to $U_{m,q}$ or $V_{m,q}$ satisfies
\begin{equation}\label{eq:tripartite-gap}
 \LL-\lam(G)\ge c_\eta/n.
\end{equation}
\end{proposition}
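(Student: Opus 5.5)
The argument is a contradiction-by-sequence, as in the gap part of the proof of Theorem~\ref{thm:closed}. Suppose the statement fails for some fixed $\eta$ (and $\tau$). Then for each $j$ there is a graph $G_j$ on $n_j\ge j$ vertices, not isomorphic to the allowed extremal graphs, satisfying the strict inequality and with $n_j(\Lambda-\lam(G_j))\to0$. Here $\Lambda$ is $\lam(E_{n,3,q})$ for $s=0,1$ and $L_{m,q}$ for $s=2$. Pass to a subsequence with fixed $s$ and $q/m\to\gamma$, where $\gamma\in[\eta,1]$ for $s=0,2$ and $\gamma\in[\eta,1-\tau]$ for $s=1$. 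The target graphs have spectral radius $\al n+f_3(q/n)-O(1/n)$, so $\lam(G_j)\ge\al n+f_3(q/n)-L/n$ for a fixed $L$. Hence Corollary~\ref{cor:global-entry} applies, and so do Proposition~\ref{thm:reduction} (if $\gamma<1$) or Theorem~\ref{thm:core} (if $\gamma=1$). After a further subsequence, all bounded integer data are fixed: $D_2,d,\ell,e,k,z_h,z_o$ in the interior, and $D_c,k,z_o,d_0$ at the endpoint.

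\emph{Interior case $\gamma\in(0,1)$.} Use Lemma~\ref{lem:integer}. Its right side now has a nonnegative limit relative to the extremal value, with no $o(1)$ slack to spare. For $s=0,1$ this means the limit is at least $-2\alpha$ relative to $T^*$. Repeat the case analysis on $d$ from the interior proof for $(r,s)\ne(3,2)$; every excluded alternative had a strictly negative margin, namely $-\alpha d(d+1)+c_\gamma d<-2\alpha$, the terms $-w_\gamma$ and $-a_\gamma+w_\gamma$, and the forced integer values. These margins are continuous in $\gamma$, so they stay bounded away from zero on $[\eta,1-\tau]$, which is compact. So every alternative loses $c/n$, and the surviving data are $d=e=z_h=z_o=k=0$ and $D_2=2$. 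That leaves a single star on $B^\pm$, which is forced to be $E_{n,3,q}$ by the count and by \eqref{eq:third-difference} ($B^+$ does not exist for $r=3$, $s\le1$). For $s=2$, use $P_\gamma$ and the tripartite case list. Each alternative gives $P_\gamma\le-c$ or a contradiction with the count. The exception is the branch $t=2$ (the $U$ branch), whose value is $U_\gamma$ minus nonnegative defects, and the $t=0$ branch, which gives $V$ via Lemma~\ref{lem:low-core}. Since we compare with $\max\{0,U_\gamma\}$, any nonzero defect in these branches costs a fixed amount. This case is uniform because $\gamma\in[\eta,1)$ is handled by compactness together with the fixed integer data.

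\emph{Endpoint $\gamma=1$ (only for $s=0,2$).} Use \eqref{eq:general-endpoint-score}. A nonnegative limit forces $z_o=0$, $D_c\in\{0,2\}$ and $d_0=k$. The balanced-core case is excluded exactly by Lemma~\ref{lem:balanced-core-general}. When $D_c=2$, we get $k=d_0=0$, so $v$ has exactly $h$ non-neighbors in $C_\pm$. Lemmas~\ref{lem:low-core} and~\ref{lem:high-core} restrict the distributions, and for $r=3$ only $C_+$ (when $s=0$) or $C_-$ with distribution $Z=V$ (when $s=2$) remain. For $s=2$ this endpoint identification is really the claim that $V$ is within the allowed pair. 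Any nonidentical graph must have some nonzero integer defect, and each such defect costs at least a fixed multiple of $1/n$ in \eqref{eq:general-endpoint-score}. The case $s=1$ is excluded from the endpoint because of the $\tau$ restriction. There \eqref{eq:core-budget} gives only $\Delta\ge k$ with zero slack in the budget, and the $X$ versus $Y$ versus $Z$ distinctions cost only order $n^{-2}$ (Lemma~\ref{lem:core-order}), so no $c/n$ gap can hold.

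\emph{Main obstacle.} The delicate step is making sure that no alternative has a margin that vanishes as $\gamma$ approaches the ends of the range. The $s=2$ comparison near the crossing $\gamma_0$ is harmless because we compare with the maximum. The real risk is $s=0$ near $\gamma=1$, where $a_\gamma(1-\gamma)z_h\to0$. I would handle this by passing to the core variables of Theorem~\ref{thm:core}, whose coefficient $2/\alpha$ does not degenerate, and by checking that the interior and endpoint analyses overlap: for $\gamma$ close to $1$, apply the core expansion instead, since Lemma~\ref{lem:core-expansion} needs only $z=o(n)$ and $q/m\to1$ along the subsequence.
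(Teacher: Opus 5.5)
Your proposal is correct and follows essentially the same route as the paper. Both argue by a contradiction sequence with fixed $s$ and $q/m\to\gamma$, with entry via Corollary~\ref{cor:global-entry} and the two reduction results. For $\gamma<1$ both use the integer comparisons of Lemma~\ref{lem:integer}, together with the $P_\gamma$ case list when $s=2$; for $\gamma=1$ both use the endpoint score \eqref{eq:general-endpoint-score} with Lemmas~\ref{lem:balanced-core-general}, \ref{lem:low-core} and \ref{lem:high-core}. Your continuity/compactness and ``overlap'' remarks are unnecessary: along the subsequence $\gamma$ is a single number, either strictly below $1$ (so Proposition~\ref{thm:reduction} applies with $\tau=(1-\gamma)/2$) or equal to $1$.
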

\begin{proof}
If one of these bounds failed, take graphs other than the listed ones for which the gap multiplied by $n$ tends to zero. Corollary~\ref{cor:global-entry} and the two reduction results apply with a fixed error in the spectral lower bound. Pass to a subsequence on which $q/m$ has a positive limit. A limit less than one is excluded by the integer comparisons in Subsection~\ref{sec:terminal}, except for the listed graphs. If the limit is one, the endpoint proof gives an almost balanced core with no remaining edits. For $n=3m$, only the core $C_+$ is possible and its best neighborhood gives the single graph listed above. For $n=3m+2$, the core $C_-$ has no middle part; its only allowed neighborhood is $V$. Thus the endpoint also gives a contradiction in those two cases. The case $s=1$ at the endpoint is excluded from this proposition and is treated below. The same argument gives positive constants independent of $q$.
\end{proof}

\begin{corollary}\label{cor:linear-crossing-gap}
For $n=3m+2$, let $d=q-q_*(m)$ and suppose $|d|=o(m)$. Then
\begin{equation}\label{eq:local-linear-gap}
 |\lam(U_{m,q})-\lam(V_{m,q})|
 =\frac{\sqrt{41}}{9m^2}|d|
 \left(1+O\left(\frac{|d|+1}{m}\right)\right).
\end{equation}
For large $m$, these are the two graphs with largest spectral radii, up to isomorphism, among all graphs satisfying the strict inequality; at a tie every other graph is at least $c/n$ below them.
\end{corollary}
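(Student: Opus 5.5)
The local formula \eqref{eq:local-linear-gap} will come from the crossing analysis of Section~\ref{sec:crossing}, and the second assertion from Theorem~\ref{thm:main} together with Proposition~\ref{prop:tripartite-gaps}.

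\textbf{The local formula.} Put $\Delta(z)=\lam_U(z)-\lam_V(z)$ for real $z\in[1,m]$. By \eqref{eq:crossing-location}, $q_*(m)=\gamma_0m+O(1)$, and the hypothesis $|d|=o(m)$ keeps $q/m$ in a small neighbourhood of $\gamma_0$. Since $\Delta(q_*)=0$, the mean-value theorem gives $\Delta(q)=d\,\Delta'(\xi)$ for some $\xi$ between $q$ and $q_*$. By \eqref{eq:crossing-derivative},
\[
\Delta'(\xi)=-\frac{5+4\gamma_\xi}{9m^2}+O(m^{-3}),\qquad \gamma_\xi=\xi/m=\gamma_0+O\bigl((|d|+1)/m\bigr).
\]
Using $4\gamma_0+5=\sqrt{41}$, this becomes $-\frac{\sqrt{41}}{9m^2}\bigl(1+O((|d|+1)/m)\bigr)$. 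Here the $O(m^{-3})$ error is absorbed because $1/m\le(|d|+1)/m$. At integer $q$ the real response roots are the actual spectral radii, so \eqref{eq:local-linear-gap} follows. The one point to check is that the uniform derivative bound \eqref{eq:crossing-derivative} holds on the whole segment, which the implicit-function argument given there provides.

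\textbf{The second assertion.} Fix $\eta<\gamma_0<1-\tau$. Since $q/m\to\gamma_0\in(0,1)$, Theorem~\ref{thm:main} shows that the strict maximum is $\LL$, attained only by $U_{m,q}$ or $V_{m,q}$. Proposition~\ref{prop:tripartite-gaps} gives $\LL-\lam(G)\ge c_\eta/n$ for every other graph $G$ satisfying the strict inequality. The gap between $U$ and $V$, by contrast, is $O(|d|/m^2)=o(1/n)$. So for large $m$ the smaller of $\lam(U_{m,q})$ and $\lam(V_{m,q})$ still exceeds $\LL-c_\eta/n$, and $U,V$ are the two graphs of largest spectral radius, up to isomorphism. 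At an integer tie both equal $\LL$, and every other graph lies at least $c/n$ below them with $c=c_\eta$.

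\textbf{Main obstacle.} The delicate point is that Proposition~\ref{prop:tripartite-gaps} is applied to a sequence with $q/m\to\gamma_0$. Its constant must not degenerate there, which holds because it depends only on $\eta$.
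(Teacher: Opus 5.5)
Your proposal is correct and follows essentially the same route as the paper. The mean-value step (with $4\gamma_0+5=\sqrt{41}$) is the paper's integration of \eqref{eq:crossing-derivative} from $q_*(m)$ to $q$, and both arguments then compare the resulting $o(n^{-1})$ gap between $U_{m,q}$ and $V_{m,q}$ with the uniform $c_\eta/n$ separation of \eqref{eq:tripartite-gap} to get the second assertion.
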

\begin{proof}
Integrate \eqref{eq:crossing-derivative} from the exact root $q_*(m)$ to $q$. Since $q_*(m)/m=(\sqrt{41}-5)/4+O(m^{-1})$, the derivative is $-\sqrt{41}/(9m^2)+O((|d|+1)/m^3)$ throughout this interval. This proves \eqref{eq:local-linear-gap}. The graph gap is $o(n^{-1})$, whereas \eqref{eq:tripartite-gap} separates every other graph by a fixed multiple of $n^{-1}$.
\end{proof}

\subsection{The three largest classes near an endpoint}
Let $n=3m+1$, $q=m-h$, and start with the core $K_3(m-1,m,m+1)$. Denote by $E_{m,h}(a,b,c)$ the graph obtained by adding one vertex with $a,b,c$ non-neighbors in the respective core parts, where $a+b+c=h$. Define
\[
 X_{m,h}=E_{m,h}(0,h,0),\quad
 Y_{m,h}=E_{m,h}(h-1,1,0),\quad
 Z_{m,h}=E_{m,h}(h,0,0).
\]
The middle graph is defined only for $h\ge1$. The notation $Y_{m,h}$ in this subsection refers to this core extension, not to the matching graph $Y_{n,r,q}$. We have $X_{m,h}=E_{n,3,q}$ and $Z_{m,h}=S(m,m,m+1;q-1)$. Lemma~\ref{lem:low-core} shows that these are the only neighborhoods satisfying the strict inequality on this core, up to a permutation of equal parts. Their counts are
\[
 N_{K_4}(X)=N_{K_4}(Y)=q(m^2-1),\qquad
 N_{K_4}(Z)=(q-1)m(m+1)<qm^2.
\]

\begin{theorem}\label{thm:endpoint-order}
There are constants $\delta,c>0$ and $m_0$ such that, for $m\ge m_0$ and $2\le h\le\delta m$, the graphs $X_{m,h},Y_{m,h},Z_{m,h}$, in that order, have the three largest spectral radii up to isomorphism among all graphs with $N_{K_4}(G)<qm^2$. Every other graph $G$ satisfies
\begin{equation}\label{eq:endpoint-separated}
 \lam(X_{m,h})-\lam(G)\ge c/n.
\end{equation}
Uniformly in this interval,
\begin{align}
 \lam(X_{m,h})-\lam(Y_{m,h})
 &=\frac{3(h-1)}{n^2}\left(1+O\left(\frac{h+1}{n}\right)\right),\label{eq:endpoint-xy}\\
 \lam(Y_{m,h})-\lam(Z_{m,h})
 &=\frac3{n^2}\left(1+O\left(\frac{h+1}{n}\right)\right).\label{eq:endpoint-yz}
\end{align}
At $h=1$, $X=Y$ and $Z$ has the second largest spectral radius, with \eqref{eq:endpoint-yz} still valid. At $h=0$, $X=Z$; every other graph has a gap of at least $c/n$.
\end{theorem}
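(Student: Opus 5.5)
The plan is to run the endpoint machinery of Subsection~\ref{sec:terminal} for $r=3$, $s=1$, with $q=m-h$ and $h\le\delta m$, and then add the exact core comparisons of Lemma~\ref{lem:core-order}.

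\textbf{Reduction to the core $K_3(m-1,m,m+1)$.} First, $X_{m,h}=E_{n,3,q}$ satisfies the strict inequality and has spectral radius $\al n+f_3(q/n)+O(n^{-1})$. So any $G$ with $\lam(X_{m,h})-\lam(G)<c/n$ meets the hypotheses of Corollary~\ref{cor:global-entry} with a fixed $L$. We may then apply Theorem~\ref{thm:core} once $\delta$ is below the constant given there; this produces a vertex $v$ whose core data $D_c,k,z_o,d_0$ are bounded.

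We argue by contradiction. Suppose a sequence of graphs outside $\{X,Y,Z\}$ has $n(\lam(X)-\lam(G))\to0$. Pass to a subsequence with fixed data, and note $h/m\to\gamma'\in[0,\delta]$. Rather than assuming $q/m\to1$, we repeat the expansion \eqref{eq:general-endpoint-score} with $R=X$. This needs Lemma~\ref{lem:core-expansion} in the regime $t=O(\delta n)$. The error terms there are of the form $o(\cdot)+O(t/n)\cdot(\ldots)$, and taking $\delta$ small keeps them below half of each integer loss.

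The score inequality then excludes three cases:
\begin{itemize}
\item $z_o\ge1$ and $D_c\ge4$, since each costs a fixed multiple of $1/n$;
\item $D_c=0$, using Lemma~\ref{lem:balanced-core-general} (valid because $h=o(a)$ is replaced by $h\le\delta a$ with $\delta$ small, or directly for small fixed ratios);
\item any positive $k$ or $d_0$.
\end{itemize}
What remains is $D_c=2$, $k=z_o=d_0=0$. For $s=1$ the core $T_{n-1,3}=K_3(m,m,m)$ has only $C_-=K_3(m-1,m,m+1)$ as an almost balanced core, and there are no edits. Lemma~\ref{lem:low-core}, with $a=m$ and $h\le a-2$, then restricts the neighborhood to $X$, $Y$ or $Z$. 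This contradicts the choice of $G$ and proves \eqref{eq:endpoint-separated}.

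\textbf{Ordering and sizes.} Next we use \eqref{eq:core-xy} and \eqref{eq:core-yz}. These were derived for $h=o(a)$, but the derivation only needs the $W,Q$ partial-derivative bounds, and those hold with relative error $O((h+1)/n)$ for all $h\le\delta m$. With $\al=2/3$, the factor $2/\al=3$ gives \eqref{eq:endpoint-xy} and \eqref{eq:endpoint-yz}. The two differences are positive for $h\ge2$, so the three graphs are distinct and ordered $X>Y>Z$. Because the gaps are $O(h/n^2)\le O(\delta/n)$, all three lie within $c/n$ of $X$ once $\delta$ is small relative to $c$, so every other graph is strictly below $Z$.

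At $h=1$ the definitions give $X=Y$ and \eqref{eq:endpoint-yz} still holds. At $h=0$ we have $X=Z$, and the separation statement is the only content.

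The main obstacle is uniformity in $h$ up to $\delta m$ rather than $h=o(m)$. Lemma~\ref{lem:core-expansion} and Lemma~\ref{lem:balanced-core-general} are stated only in the $o(n)$ regime. I would handle this by tracking their error terms explicitly as $O(\delta)$ times the integer defects, and choosing $\delta$ small. If that fails for Lemma~\ref{lem:balanced-core-general}, the fallback is the interior integer comparison of Lemma~\ref{lem:integer} with $\gamma$ close to $1$, where the loss $a_\gamma(1-\gamma)z_h$ degenerates only mildly. The other checks are that the integer values in Lemma~\ref{lem:low-core} remain valid, which needs only $h\le a-2$, and that all three counts lie below $qm^2$.
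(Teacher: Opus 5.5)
Your overall architecture is the paper's. You reduce via Corollary~\ref{cor:global-entry} and Theorem~\ref{thm:core}, then use the score \eqref{eq:general-endpoint-score} with $R=X$ to exclude $z_o\ge1$ and $D_c\ge4$. You exclude balanced cores by Lemma~\ref{lem:balanced-core-general} and finish with Lemma~\ref{lem:low-core}. Then you obtain \eqref{eq:endpoint-xy}--\eqref{eq:endpoint-yz} from Lemma~\ref{lem:core-order} with $2/\al=3$ and shrink $\delta$. Your treatment of the ordering part matches the paper.

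\textbf{The gap is in the separation step.} You fix $\delta$ before running the contradiction, so your sequences may have $h/m\to\gamma'\in(0,\delta]$, that is, $q/m\not\to1$ and $t=\Theta(n)$. Both lemmas you need are proved only outside this regime:
\begin{itemize}
\item Lemma~\ref{lem:core-expansion} is proved only for $q/m\to1$. It derives $t=o(n)$ from that hypothesis, and its integrality step in \eqref{eq:core-budget} relies on the error tending to zero.
\item Lemma~\ref{lem:balanced-core-general} uses $h=o(a)$, for instance through $w_L=o(a)$ in its overlap estimate.
\end{itemize}
In your regime the errors in \eqref{eq:core-vs-proxy} and \eqref{eq:core-budget} become additive quantities of order $\delta(C+1)$. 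You would have to beat them by choosing $\delta$ after the constant $C$ of Theorem~\ref{thm:core}. You only assert that this bookkeeping works, and you offer a fallback in case it does not. As written, \eqref{eq:endpoint-separated} is therefore not proved.

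\textbf{The missing idea is that this obstacle never has to arise.} Since $\delta$ and $c$ are both existentially quantified, negate the separation claim with $\delta_j=c_j=1/j$ and $m_0=j$. The resulting counterexamples have $h/m\to0$, hence $q/m\to1$, and every endpoint lemma applies verbatim. This is exactly the paper's argument. A constant $c$ obtained for some $\delta_0$ remains valid for every smaller $\delta$. So you may shrink $\delta$ afterwards, both for the uniform $W,Q$ estimates and to place $Y,Z$ within $c/(2n)$ of $X$.

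Alternatively, your fallback closes the gap if you use it as a dichotomy rather than as a uniform estimate. If $h/m\to\gamma'>0$ along the contradiction sequence, then $q/m$ has the fixed limit $1-\gamma'\in(0,1)$. The interior comparison following Lemma~\ref{lem:integer-defect} then forces $G\cong X$ eventually. Nothing degenerates there, because $a_\gamma(1-\gamma)$ and the other losses are fixed positive numbers for a fixed limit. Only the case $\gamma'=0$ needs the endpoint lemmas, and there they apply as stated.
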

\begin{proof}
We first prove the gap from all other graphs. If no fixed $\delta,c,m_0$ worked, there would be a sequence with $h/m\to0$ and graphs other than the listed ones satisfying $n(\lam(X)-\lam(G))\to0$. Corollary~\ref{cor:global-entry} and Theorem~\ref{thm:core} apply. After passing to a subsequence, all bounded integer parameters of the core are fixed. The proof of \eqref{eq:general-endpoint-score}, specialized to $r=3,s=1$, shows that a nonnegative limit excludes all core defects at least four and all missing core edges. For a balanced core, only $k=0,1$ and $d_0=k$ remain; Lemma~\ref{lem:balanced-core-general} excludes both. Thus the graph is obtained from an almost balanced core by adding one vertex with exactly $h$ non-neighbors. This core is $K_3(m-1,m,m+1)$, and Lemma~\ref{lem:low-core} leaves exactly $X,Y,Z$. This contradiction proves \eqref{eq:endpoint-separated} in a fixed endpoint window, including the equalities at $h=0,1$.

The response calculation in Lemma~\ref{lem:core-order} gives \eqref{eq:endpoint-xy}--\eqref{eq:endpoint-yz} with $\al_3=2/3$. Its estimates are uniform also for $h\le\delta m$ once $\delta$ is sufficiently small: the roots remain $2n/3+O(1)$, $W=O(h/n)$, the partial derivative in $W$ is $-3n^{-1}(1+O((h+1)/n))$, the partial derivative in $Q$ is $O(n^{-3})$, and the full root derivative is $-1/n+O(n^{-2})$. The exact differences of $W,Q$ displayed there therefore have the stated relative errors. Shrink $\delta$ so that both errors have absolute value less than one half. This proves the strict order for $h\ge2$ and the order at $h=1$.

Finally, shrink $\delta$ again so that the sum of the two displayed gaps is less than $c/(2n)$. Then \eqref{eq:endpoint-separated} puts every other graph below $Z$. Thus these are the three graphs with largest spectral radii. The equalities at $h=0,1$ follow from the actual neighborhoods and describe the same graph.
\end{proof}

\subsection{An example near the endpoint}
For a graph $G$, let
\[
 z_3(G)=\min_{(P_1,P_2,P_3)}
 \sum_{i<j}\bigl(|P_i||P_j|-e_G(P_i,P_j)\bigr),
\]
where the minimum ranges over partitions into three nonempty parts. This quantity counts missing cross-edges only and allows arbitrary internal edges in the new partition.

\begin{proposition}\label{prop:core-example}
Choose integers $h\to\infty$ with $h=o(\sqrt m)$, put $q=m-h$, and set $a=\lfloor(h+1)/2\rfloor,\qquad b=h+1-a$.
Start with $K_3(m+1,m,m)$ on parts $A,B,C$. Add a vertex $v$ adjacent to every core vertex except $a$ vertices of $A$ and $b$ vertices of $B$, and call the graph $W_{m,h}$. Then, for sufficiently large $m$,
\begin{align}
 N_{K_4}(W_{m,h})-qm(m+1)
 &=m\bigl(-m+a(b+1)-1\bigr)<0,\label{eq:example-count}\\
 \lam(V_{m,q})-\lam(W_{m,h})
 &=\frac5{3n}+o(n^{-1}),\qquad n=3m+2,\label{eq:example-gap}\\
 z_3(W_{m,h})&=\min\{a,b\}\longrightarrow\infty.\label{eq:example-deletions}
\end{align}
However, deleting $v$ leaves a balanced complete tripartite graph.
\end{proposition}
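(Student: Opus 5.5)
We prove the three displays in \eqref{eq:example-count}--\eqref{eq:example-deletions} separately: the count and $z_3$ by direct counting, and the spectral gap by the core comparison from the proof of Lemma~\ref{lem:core-expansion}. The last sentence of the statement is immediate, since $W_{m,h}-v=K_3(m+1,m,m)=T_{3m+1,3}$.

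\emph{Count.} The core $K_3(m+1,m,m)$ has no internal edges, so it contains no $K_4$. Every copy of $K_4$ therefore contains $v$ together with one neighbor of $v$ in each of $A,B,C$. Hence
\[
N_{K_4}(W_{m,h})=(m+1-a)(m-b)m .
\]
Expand this, and expand $qm(m+1)=(m-a-b+1)m(m+1)$ using $q=m-h=m-a-b+1$. The difference is $m\bigl(ab+a-m-1\bigr)$, which is \eqref{eq:example-count}. Since $a(b+1)=O(h^2)=o(m)$, it is negative for large $m$.

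\emph{Spectral gap.} The plan is not to invoke Lemma~\ref{lem:core-expansion} as stated, since that lemma assumes near-extremality. Instead we use only its ingredients that hold for any cone over a complete tripartite core: the exact response \eqref{eq:core-response}, the comparison \eqref{eq:core-vs-proxy} with the proxy root $\rho_n(t)$, and the proxy derivative \eqref{eq:proxy-derivative}. These require only $S_c,t=o(n)$ and no core edits.

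\begin{itemize}
\item For $W_{m,h}$, the core is $K_3(m+1,m,m)$ with $N=3m+1$ and $b_0=N/3$. Its offsets are $2/3,-1/3,-1/3$, so $S_c=2/3$, and $t=a+b=h+1$.
\item For $V_{m,q}=S(m,m+1,m+1;q-1)$, delete the star center, which lies in the $m$-part. The core is $K_3(m-1,m+1,m+1)$ with offsets $-4/3,2/3,2/3$, so $S_c=8/3$. The center misses $m-1-(q-1)=h$ core vertices, so $t=h$.
\end{itemize}

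Both values of $t$ are $O(\sqrt m)=o(n)$. Moreover $Q=O(h^2/n)$ and $(\Psi)_Q=O(n^{-3})$, so the distribution of the non-neighbors of $v$ enters only through $o(1)$ errors.

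Applying \eqref{eq:core-vs-proxy} to both graphs gives
\[
n\{\lam(V)-\lam(W)\}=-\al\bigl(\tfrac83-\tfrac23\bigr)+n\{\rho_n(h)-\rho_n(h+1)\}+o(1).
\]
By \eqref{eq:proxy-derivative} the second term equals $2/\al+o(1)$. With $\al=2/3$ this is $-4/3+3=5/3$, which proves \eqref{eq:example-gap}. The error terms in \eqref{eq:core-vs-proxy}, namely $(A_c+1)S_c/n$, $A_ct/n$ and $t^2/n^2$, are all $o(1)$ because $h=o(\sqrt m)$. This is the step where the hypothesis on $h$ is used, and it is the main point to check carefully.

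\emph{The quantity $z_3$.} For the upper bound, put $v$ into the part containing $A$. Then the only missing cross-edges are the $b$ non-edges from $v$ to $B$. Putting $v$ into the part containing $B$ instead costs $a$. Hence $z_3(W_{m,h})\le\min\{a,b\}=a$.

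For the lower bound, take any partition with fewer than $a\le m$ missing cross-edges. The non-edges of $W_{m,h}$ consist of three cliques of non-edges, on $A$, $B$ and $C$, together with the $h+1$ non-edges at $v$.

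\begin{itemize}
\item Suppose one of $A,B,C$ is split into pieces of sizes $s$ and $m-s$ (or $m+1-s$) with $1\le s$. Then it already contributes at least $s(m-s)\ge m-1\ge a$ missing cross-edges. So each of $A,B,C$ lies inside a single part.
\item If two of $A,B,C$ share a part, then some part must be exactly $\{v\}$, since all three parts are nonempty. That costs $h+1>a$.
\item Hence $A,B,C$ occupy the three parts, and $v$ joins one of them. The cost is then $b$, $a$ or $a+b$, so it is at least $a$.
\end{itemize}

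Thus $z_3=\min\{a,b\}=\lfloor(h+1)/2\rfloor\to\infty$, which is \eqref{eq:example-deletions}.
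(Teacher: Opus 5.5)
Your proposal is correct and follows the paper's proof essentially step for step. It uses the same product formula $(m+1-a)(m-b)m$ for the count, and the same core comparison ($S_c=\tfrac23$ versus $\tfrac83$, with $h+1$ versus $h$ non-neighbors, giving $\tfrac43-3=-\tfrac53$). Your $z_3$ case analysis (a split core part costs about $m$, an isolated $\{v\}$ costs $h+1$, otherwise $\min\{a,b\}$) is also the paper's.

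Applying the ingredients of Lemma~\ref{lem:core-expansion} directly, rather than the lemma with its near-extremality hypotheses, is a sensible extra bit of care. One small correction: the spectral error terms are already $o(1)$ whenever $h=o(n)$. So $h=o(\sqrt m)$ is really needed only for the sign in the clique count, not in the spectral step.
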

\begin{proof}
The core contains no $K_4$, so $N_{K_4}(W)=(m+1-a)(m-b)m$. Using $a+b=h+1$ gives \eqref{eq:example-count}, whose sign follows from $a(b+1)=O(h^2)=o(m)$.

The core squared deviation of $W$ is $2/3$, whereas the core of $V$ has squared deviation $8/3$. Their added vertices have $h+1$ and $h$ non-neighbors, respectively. There are no core edits. Lemma~\ref{lem:core-expansion} gives
\[
 n(\lam(W)-\lam(V))
 =-\frac23\left(\frac23-\frac83\right)-3+o(1)
 =-\frac53+o(1),
\]
which proves \eqref{eq:example-gap} independently of the exact extremal theorem.

Assigning $v$ to the core part containing more of its non-neighbors gives a partition with $\min\{a,b\}$ missing cross-edges. Conversely, if any core part is split by a new partition into three parts, its own non-edges across that split already contribute at least $m-1>\min\{a,b\}$ missing cross-edges. Hence a partition with a smaller count would keep every core part intact. If the three core parts occupy three different new parts, placing $v$ can absorb at most $\max\{a,b\}$ of its non-neighbors. At least $\min\{a,b\}$ remain across parts. If the core occupies two new parts, the third consists of $v$ and all $h+1$ non-neighbors are counted. The core cannot occupy only one part, because a single remaining vertex cannot fill two further nonempty parts. This proves \eqref{eq:example-deletions}.
\end{proof}

In its original partition with $v$ assigned to $A$, this example has $O(n)$ internal edges and $o(n)$ missing cross-edges. It shows that even after optimizing the partition, a bounded missing-edge conclusion is too strong for all graphs within $O(n^{-1})$ of the strict optimum. The bounded one-vertex core in Theorem~\ref{thm:core} remains valid and gives the right form near the endpoint.

In all exact results, $r$ is fixed and $n$ is sufficiently large. Theorem~\ref{thm:main} includes every integer $1\le q<n/r$, with its two explicitly stated small-$q$ exceptions. Theorem~\ref{thm:global-asymptotic} gives a first-order result on larger fixed linear ranges; it is not an exact classification for $q\ge n/r$. The gap estimates hold in the ranges stated in the theorems, and the equalities at the endpoint in Theorem~\ref{thm:endpoint-order} refer to the same graph, not to different extremal graphs.

\section*{Data availability}

No data were used for the research described in this article.

\section*{Declaration on the use of generative AI}

The authors used ChatGPT (OpenAI) to discuss proof strategies, check intermediate arguments, and improve the exposition. The authors reviewed and verified the mathematical content and take full responsibility for the manuscript.

\end{document}